\numberwithin{equation}{section}
\definecolor{MyBlue}{cmyk}{1,0.13,0,0.63}
\definecolor{MyGreen}{cmyk}{0.91,0,0.88,0.52}
\definecolor{MyRed}{rgb}{.6,0,0}
\newcommand{\mylinkcolor}{MyBlue}
\newcommand{\mycitecolor}{MyGreen}
\newcommand{\myurlcolor}{MyRed}
\def\@endtheorem{\endtrivlist}
\theoremstyle{plain}
\newtheorem{thm}{Theorem}[section]
\newtheorem{lem}[thm]{Lemma}
\newtheorem{prop}[thm]{Proposition}
\newtheorem{coro}[thm]{Corollary}
\theoremstyle{definition}
\newtheorem{defn}[thm]{Definition}
\newtheorem{remark}[thm]{Remark}
\newtheorem{notation}[thm]{Notation}
\newtheorem{assumption}[thm]{Assumption}
\renewcommand{\eqref}[1]{\labelcref{#1}}
\crefname{thm}{Theorem}{Theorems}
\crefname{lem}{Lemma}{Lemmas}
\crefname{prop}{Proposition}{Propositions}
\crefname{coro}{Corollary}{Corollaries}
\crefname{defn}{Definition}{Definitions}
\crefname{example}{Example}{Examples}
\crefname{remark}{Remark}{Remarks}
\def\thm@space@setup{%
  \thm@preskip=4pt plus 2pt minus 2pt
  \thm@postskip=\thm@preskip
}
\renewenvironment{proof}[1][\proofname]{\par
  \pushQED{\qed}%
  \normalfont \topsep4\p@\relax 
  \trivlist
  \item[\hskip\labelsep
        \itshape
    #1\@addpunct{.}]\ignorespaces
}{%
  \popQED\endtrivlist\@endpefalse
}
\setlist{topsep=4pt plus 2pt minus 2pt,partopsep=0pt,itemsep=2pt plus 2pt minus 2pt,parsep=0.5\parskip}
\newcommand{\MR}[1]{}
\let\OLDthebibliography\thebibliography
\renewcommand\thebibliography[1]{
  \addcontentsline{toc}{section}{\refname}
  \OLDthebibliography{#1}
  \setlength{\parskip}{0pt}
  \setlength{\itemsep}{0pt plus 0.3ex}
}
\newlist{enumlocal}{enumerate}{1}
\setlist[enumlocal]{label={(L\arabic*)},resume=foo}
\newcommand{\N}{\mathbb{N}}
\newcommand{\R}{\mathbb{R}}
\newcommand{\C}{\mathbb{C}}
\newcommand{\Z}{\mathbb{Z}}
\newcommand{\A}{\mathcal{A}}
\newcommand{\D}{\mathcal{D}}
\newcommand{\B}{\mathcal{B}}
\newcommand{\mO}{\mathcal{O}}
\newcommand{\mF}{\mathcal{F}}
\newcommand{\mS}{\mathcal{S}}
\newcommand{\mT}{\mathcal{T}}
\newcommand{\E}{\mathcal{E}}
\DeclareMathOperator{\Dom}{Dom}
\DeclareMathOperator{\Ran}{Ran}
\DeclareMathOperator{\Id}{Id}
\DeclareMathOperator{\End}{End}
\DeclareMathOperator{\Hom}{Hom}
\DeclareMathOperator{\supp}{supp}
\DeclareMathOperator{\Lip}{Lip}
\DeclareMathOperator{\ev}{ev}
\renewcommand{\Re}{\mathop{\textnormal{Re}}}
\renewcommand{\bar}[1]{\overline{#1}}
\newcommand{\K}{K}
\newcommand{\KK}{K\!K}
\newcommand{\til}[1]{\widetilde{#1}}
\newcommand{\hotimes}{\mathbin{\hat\otimes}}
\newcommand{\hot}{\hotimes}
\newcommand{\la}{\langle}
\newcommand{\ra}{\rangle}
\newcommand{\into}{\hookrightarrow}
\newcommand{\mvert}{\,|\,}
\newcommand{\bigmvert}{\,\big|\,}
\newcommand{\mattwo}[4]{
  \begin{pmatrix}#1&#2\\ #3&#4\end{pmatrix}
}
\DeclareFontFamily{OMX}{MnSymbolE}{}
\DeclareSymbolFont{MnLargeSymbols}{OMX}{MnSymbolE}{m}{n}
\DeclareFontShape{OMX}{MnSymbolE}{m}{n}{
    <-6>  MnSymbolE5
   <6-7>  MnSymbolE6
   <7-8>  MnSymbolE7
   <8-9>  MnSymbolE8
   <9-10> MnSymbolE9
  <10-12> MnSymbolE10
  <12->   MnSymbolE12
}{}
\DeclareFontShape{OMX}{MnSymbolE}{b}{n}{
    <-6>  MnSymbolE-Bold5
   <6-7>  MnSymbolE-Bold6
   <7-8>  MnSymbolE-Bold7
   <8-9>  MnSymbolE-Bold8
   <9-10> MnSymbolE-Bold9
  <10-12> MnSymbolE-Bold10
  <12->   MnSymbolE-Bold12
}{}
\let\llangle\@undefined
\let\rrangle\@undefined
\DeclareMathDelimiter{\llangle}{\mathopen}%
                     {MnLargeSymbols}{'164}{MnLargeSymbols}{'164}
\DeclareMathDelimiter{\rrangle}{\mathclose}%
                     {MnLargeSymbols}{'171}{MnLargeSymbols}{'171}
\newcommand{\lla}{\llangle}
\newcommand{\rra}{\rrangle}
\def\MyTitle{Localisations of half-closed modules \\ and the unbounded Kasparov product}
\def\MyShortTitle{Localisations and the Kasparov product}
\title{\MyTitle}
\author{
Koen van den Dungen%
\footnote{Email: \texttt{kdungen@uni-bonn.de}}
\\[2mm]
{\small Mathematisches Institut}, 
{\small Universit\"at Bonn}\\
{\small Endenicher Allee 60, D-53115 Bonn}
}
\date{}
\begin{document}

\maketitle

\begin{abstract}
\noindent
In the context of the Kasparov product in unbounded $\KK$-theory, a well-known theorem by Kucerovsky provides sufficient conditions for an unbounded Kasparov module to represent the (internal) Kasparov product of two other unbounded Kasparov modules. 
In this article, we discuss several improved and generalised variants of Kucerovsky's theorem. 
First, we provide a generalisation which relaxes the positivity condition, by replacing the lower bound by a \emph{relative} lower bound. 
Second, we also discuss Kucerovsky's theorem in the context of half-closed modules, which generalise unbounded Kasparov modules to \emph{symmetric} (rather than self-adjoint) operators. 
In order to deal with the positivity condition for such non-self-adjoint operators, we introduce a fairly general localisation procedure, which (using a suitable approximate unit) provides a `localised representative' for the $\KK$-class of a half-closed module. 
Using this localisation procedure, we then prove several variants of Kucerovsky's theorem for half-closed modules. 
A distinct advantage of the localised approach, also in the special case of self-adjoint operators (i.e., for unbounded Kasparov modules), is that the (global) positivity condition in Kucerovsky's original theorem is replaced by a (less restrictive) `local' positivity condition, which is closer in spirit to the well-known Connes-Skandalis theorem in the bounded picture of $\KK$-theory. 

\vspace{\baselineskip}
\noindent
\emph{Keywords}: Unbounded $\KK$-theory; the Kasparov product; symmetric operators.

\noindent
\emph{Mathematics Subject Classification 2010}: 
19K35. 
\end{abstract}

\newpage 

\section{Introduction}

Let $A$, $B$, and $C$ be $C^*$-algebras. 
Kasparov's $\KK$-theory \cite{Kas80b} provides the abelian group $\KK(A,B)$ of homotopy equivalence classes of (bounded) Kasparov $A$-$B$-modules. 
One of the main features of $\KK$-theory is the existence of an associative bilinear pairing called the Kasparov product:
\[
\KK(A,B) \times \KK(B,C) \to \KK(A,C) .
\]
However, it is not possible to compute the Kasparov product explicitly in general. 
An important improvement was made by Connes and Skandalis \cite{CS84}, who provided sufficient conditions (the so-called connection condition and positivity condition) which ensure that a certain Kasparov module represents the Kasparov product of two other given Kasparov modules. 

It was shown by Baaj and Julg \cite{BJ83} that elements in $\KK$-theory can also be represented by \emph{unbounded} Kasparov modules. 
Many examples of elements in $\KK$-theory are constructed from geometric situations and are most naturally represented by unbounded Kasparov modules. 
For example, the fundamental class in the $\K$-homology of a spin manifold is naturally represented by the Dirac operator (viewed as an \emph{unbounded} operator on the Hilbert space of spinors). 
A distinct advantage of the unbounded picture is that the Kasparov product is often easier to compute; in fact, under suitable assumptions, the unbounded Kasparov product can be explicitly \emph{constructed} \cite{Mes14,KL13,BMS16}. 

However, the unbounded Kasparov modules introduced by Baaj and Julg require the unbounded operators to be \emph{self-adjoint}. 
This self-adjointness is rather natural in the case of unital $C^*$-algebras (e.g.\ for compact manifolds), but imposes a completeness condition in the non-unital case (e.g.\ for non-compact manifolds) \cite{MR16}. 
Thus, unfortunately, the Baaj-Julg framework does not include typical examples such as Dirac-type operators on non-compact, \emph{incomplete}, Riemannian spin manifolds (which in general are only symmetric but not self-adjoint). 
Nevertheless, it was shown by Baum-Douglas-Taylor \cite{BDT89} that any elliptic symmetric first-order differential operator $\D$ on a Riemannian manifold $M$ yields a class $[\D] := [F_\D]$ in the $\K$-homology of $M$, given by the \emph{bounded transform} $F_\D := \D(1+\D^*\D)^{-\frac12}$. 
Hilsum \cite{Hil10} later provided an abstract definition of \emph{half-closed modules}, which generalise unbounded Kasparov modules by replacing the self-adjointness condition by a more flexible symmetry condition. Moreover, Hilsum proved that the bounded transform of a half-closed module still yields a well-defined class in $\KK$-theory. 
Hilsum's framework encompasses the symmetric elliptic operators of Baum-Douglas-Taylor as well as the \emph{vertically elliptic} operators on submersions of (possibly incomplete) smooth manifolds studied in \cite{KS18,KS20,vdD20_Kasp_open}. 

An important step in the development of the unbounded Kasparov product was provided by Kucerovsky's theorem \cite{Kuc97}, which translated the Connes-Skandalis result to the unbounded framework of Baaj and Julg. 
Thus, given two unbounded Kasparov modules $(\A,(E_1)_B,\D_1)$ and $(\B,(E_2)_C,\D_2)$, Kucerovsky provided sufficient conditions for another unbounded Kasparov module $(\A,E_C,\D)$ to represent the (internal) Kasparov product. 
However, while the positivity condition of Connes-Skandalis is a \emph{local} condition, the positivity condition of Kucerovsky is a \emph{global} condition (and is therefore more restrictive). 
Moreover, Kucerovsky's theorem only applies to the self-adjoint (complete) case. 

In this article, we discuss several improved and generalised variants of Kucerovsky's theorem. 
In particular, we will make the following two improvements to the so-called positivity condition:
\begin{enumerate}[label=(\Roman*)]
\item \label{improvement:first-order}
we replace the lower bound by a \emph{relative} lower bound with respect to the operator representing the Kasparov product;
\item \label{improvement:local}
we ensure that the positivity condition only needs to be checked \emph{locally} (rather than globally), which is closer in spirit to the positivity condition of Connes-Skandalis. 
\end{enumerate}
Our `local' approach to the positivity condition in particular also allows us to consider the non-self-adjoint (incomplete) case of \emph{half-closed modules}. 
In fact, it was one of our main goals in this article to enhance our understanding of the unbounded Kasparov product of two half-closed modules. 
The main motivating examples come from vertically elliptic operators on submersions of open manifolds, for which the Kasparov product was already described in \cite{vdD20_Kasp_open}. 
We will show that similar results can be obtained in the context of \emph{noncommutative} $C^*$-algebras as well. 
Specifically, we prove several localised versions of Kucerovsky's theorem for half-closed modules. 

We emphasise here that our localised approach can also be of advantage in the self-adjoint (complete) case of unbounded Kasparov modules, since the `local' positivity condition is more flexible and therefore more broadly applicable. 
Indeed, consider for instance the simple example of the Riemannian submersion of Euclidean spaces $\R^2 \to \R$, $(x,y) \mapsto x$. 
Then the vertical operator $\D_1 := -i\partial_y + \sin(e^x)$ defines a class in $\KK^1(C_0(\R^2),C_0(\R))$, and the operator $\D_2 := -i\partial_x$ defines a class in $\KK^1(C_0(\R),\C)$. 
Their Kasparov product is the fundamental class in the $\K$-homology of $C_0(\R^2)$, represented by the standard Dirac operator on $\R^2$, which can be checked by simply ignoring the bounded perturbation $\sin(e^x)$ in $\D_1$. 
However, this fact cannot be checked \emph{directly} (without ignoring the perturbation) by an application of Kucerovsky's theorem. Indeed, the commutator 
$
[\D_1,\D_2] = i e^x \cos(e^x) 
$
is not bounded (nor relatively bounded by $\D_1$ and/or $\D_2$), and Kucerovsky's positivity condition fails. 
This illustrates the problem with the uniformity of the estimate in Kucerovsky's positivity condition. 
But, as the commutator $[\D_1,\D_2]$ is certainly \emph{locally} bounded, our \emph{local} positivity condition (see \cref{defn:local_positivity_condition}) is satisfied without needing to change the operator $\D_1$. 

We point out that Kaad and Van Suijlekom have also proved a variant of Kucerovsky's theorem for half-closed modules \cite[Theorem 6.10]{KS19}, which partially addresses item \ref{improvement:local}. 
However, in their work, if the left module is essential, then the required assumptions in \cite[Theorem 6.10]{KS19} imply that $\D_1$ is self-adjoint, so that the left module is in fact an unbounded Kasparov module (see \cite[Remark 6.5]{KS19}). 
In the context of differential operators on Riemannian submersions, to obtain the self-adjointness of $\D_1$ one needs to assume for instance (cf.\ \cite{KS20}) that the submersion is \emph{proper} (which means that the fibres are compact). 
It was one of the main goals of this article to obtain a version of Kucerovsky's theorem which includes the case of submersions with non-compact and incomplete fibres as well, so that in particular the results in \cite{vdD20_Kasp_open} are recovered as a special case. 

Let us summarise the contents and main results of this article. 
We start \cref{sec:prelim} by recalling some preliminaries on $\KK$-theory and the Kasparov product, and then review Hilsum's framework of half-closed modules in \cref{sec:half-closed}. 
In \cref{sec:reg_symm} we collect a few useful lemmas regarding regular symmetric operators on Hilbert modules. 
In \cref{sec:connection} we discuss the connection condition, which can be dealt with in the non-self-adjoint case in the same way as in the self-adjoint case, without additional difficulty. 
We will show that the connection condition for half-closed modules implies the connection condition of Connes-Skandalis. 

In \cref{sec:Kasp_prod_revisited}, we first restrict our attention to the self-adjoint case of unbounded Kasparov modules, and we show that the lower bound in Kucerovsky's positivity condition can be replaced by a \emph{relative} lower bound (as mentioned in \ref{improvement:first-order} above). 
To motivate this generalisation, let us first compare to the aforementioned Connes-Skandalis Theorem (see \cref{thm:Connes-Skandalis}). 
Consider three (bounded) Kasparov modules $(\A,(E_1)_B,F_1)$, $(\B,(E_2)_C,F_2)$, and $(\A,E_B,F)$ (where $E=E_1\hot_BE_2$). 
Roughly speaking, the positivity condition of Connes-Skandalis requires the (graded) commutator $[F,F_1\hot1]$ to be positive modulo compact operators. If we think of this in terms of pseudodifferential operators, this means there exists a positive zeroth-order pseudodifferential operator with the same principal symbol as $[F,F_1\hot1]$. 
On the other hand, Kucerovsky's positivity condition (roughly speaking) requires the (graded) commutator $[\D,\D_1\hot1]$ to be positive modulo bounded operators (i.e., modulo `zeroth-order' operators). 
This seems somewhat unnatural, since if we think of this in terms of differential operators, it means that the positivity condition depends not only on the (second-order) principal symbol of $[\D,\D_1\hot1]$, but also on the first-order part of the symbol (even though the $\KK$-classes of $\D$ and $\D_1$ are determined by their principal symbols alone). 
A more natural condition would therefore be to require $[\D,\D_1\hot1]$ to be positive modulo `first-order' operators, meaning that there exists a positive second-order differential operator with the same principal symbol as $[\D,\D_1\hot1]$. 
We will prove in \cref{thm:Kucerovsky_revisited} that Kucerovsky's theorem can indeed be generalised to this more natural positivity condition. 

The unbounded Kasparov product has been studied in \cite{LM19} in the context of weakly anti-commuting operators. 
As a corollary to \cref{thm:Kucerovsky_revisited}, we obtain here an alternative proof of \cite[Theorem 7.4]{LM19}. 

In the remainder of this article, we consider the symmetric (non-self-adjoint) case of half-closed modules. 
In \cref{sec:localisations} we describe our localisation procedure for half-closed modules, and provide the construction of a \emph{localised representative} $\til F_\D$ for a half-closed module $(\A,E_B,\D)$. 
This construction was first given by Higson \cite{Hig89pre} (see also \cite[\S10.8]{Higson-Roe00}) for the case of elliptic symmetric first-order differential operators, and was generalised by the author \cite{vdD20_Kasp_open} to the case of \emph{vertically elliptic} symmetric first-order differential operators on submersions of open manifolds. 
We will show in \cref{sec:local_representative} that Higson's construction of a localised representative can be generalised further to the abstract noncommutative setting of a half-closed module $(\A,E_B,\D)$. The construction is based on the assumption that there exists an almost idempotent approximate unit $\{u_n\}_{n\in\N}$ in the dense $*$-subalgebra $\A\subset A$. 
Such approximate units always exist in any $\sigma$-unital $C^*$-algebra $A$; the only additional assumption here is that it must lie in the subalgebra $\A$. 
Under this assumption, we show how to construct a localised representative for the $\KK$-class of a half-closed module. 

A detailed treatment of the local version of the positivity condition is given in \cref{sec:local_positivity}. 
The main technical result consists of showing that our `local' positivity condition for half-closed modules implies that the positivity condition of Connes-Skandalis is satisfied `locally'. The proof relies on the following two features of the localised representative. 
First, it allows us to work `locally' with self-adjoint (rather than only symmetric) operators. 
Second, each localised term can be rescaled independently (which is crucial in order to obtain a uniform constant in the positivity condition). 

In \cref{sec:Kasp_prod}, we then finally provide several localised versions of Kucerovsky's theorem for half-closed modules, 
which provide sufficient conditions for a half-closed module $(\A,E_C,\D)$ to represent the (internal) Kasparov product of two half-closed modules $(\A,(E_1)_B,\D_1)$ and $(\B,(E_2)_C,\D_2)$. 
All these versions require the existence of a suitable approximate unit $\{u_n\}\subset\A$ as in \cref{sec:local_representative}. 
Our first main result (\cref{thm:Kucerovsky_half-closed_localised}) requires (in addition to a domain condition) the following local version of the positivity condition: 
for each $n\in\N$ there exists $c_n\in[0,\infty)$ such that for all $\psi\in\Dom(\D u_n) \cap \Ran(u_n)$ we have 
\begin{align}
\label{eq:positivity_localised_intro}
\big\la u_n(\D_1\hot1)\psi \bigmvert \D u_n\psi \big\ra + \big\la \D u_n\psi \bigmvert u_n(\D_1\hot1)\psi \big\ra 
\geq - c_n \big\la \psi \bigmvert (1+(u_n\D u_n)^2)^{\frac12} \psi \big\ra . 
\end{align}
Using the results from \cref{sec:local_positivity}, \cref{eq:positivity_localised_intro} ensures that the positivity condition of Connes-Skandalis is satisfied `locally'. 
The construction of the localised representative $\til F_{\D_1}$ using a `partition of unity' then allows us to prove that the positivity condition is in fact satisfied globally. 

Roughly speaking, condition \eqref{eq:positivity_localised_intro} requires that the anti-commutator $\big[ \D_1\hot1 , u_n\D u_n \big]$ is positive modulo `first-order operators' (where `first-order' is defined relative to $u_n\D u_n$). However, to obtain a better analogy to the Connes-Skandalis positivity condition, it would be more natural to require $u_n [\D_1\hot1,\D] u_n$ to be positive modulo `first-order operators' (where `first-order' should be defined relative to $\D$). 
Unless $\D_1$ commutes with $u_n$ (as in \cite{KS19}), this additional step is non-trivial (indeed, although we know that $[\D_1,u_n]$ is bounded, this alone does not guarantee that we may consider $[\D_1\hot1,u_n]\D u_n$ to be `first-order'). 
In \cref{sec:local_Kucerovsky}, we will consider two possible sufficient conditions which allow us to make this additional step. 

The first sufficient condition assumes, instead of \eqref{eq:positivity_localised_intro}, 
a \emph{strong} local positivity condition, which requires that (for each $n\in\N$) there exist $\nu_n\in(0,\infty)$ and $c_n\in[0,\infty)$ such that for all $\psi\in\Dom(\D)$ we have 
\begin{multline*}
\big\la (\D_1\hot1)u_n\psi \bigmvert \D u_n\psi \big\ra + \big\la \D u_n\psi \bigmvert (\D_1\hot1)u_n\psi \big\ra \\*
\geq \nu_n \big\la (\D_1\hot1)u_n\psi \bigmvert (\D_1\hot1)u_n\psi \big\ra - c_n \big\la u_n\psi \bigmvert (1+\D^*\D)^{\frac12} u_n\psi \big\ra . 
\end{multline*}

The second sufficient condition assumes, instead of \eqref{eq:positivity_localised_intro}, a \emph{local positivity condition}, which requires simply that (for each $n\in\N$) there exists $c_n\in[0,\infty)$ such that for all $\psi\in\Dom(\D)$ we have 
\begin{align*}
\big\la (\D_1\hot1)u_n\psi \bigmvert \D u_n\psi \big\ra + \big\la \D u_n\psi \bigmvert (\D_1\hot1)u_n\psi \big\ra \geq - c_n \big\la u_n\psi \bigmvert (1+\D^*\D)^{\frac12} u_n\psi \big\ra , 
\end{align*}
along with a \emph{`differentiability' condition}, which requires that the operator $u_n[\D,u_n]u_{n+2}$ maps $\Dom(\D u_{n+2}^2)$ to $\Dom(\D_1\hot1)$. 

We note that the latter `differentiability' condition is quite naturally satisfied in the context of first-order differential operators on smooth manifolds, when $u_n$ are compactly supported smooth functions and $\D$ is elliptic (as in \cite{vdD20_Kasp_open}). 
In \cref{sec:construction} we will show that the \emph{strong} local positivity condition is in fact fairly natural in the constructive approach to the unbounded Kasparov product.

\subsection{Acknowledgements}

This article is a continuation of \cite{vdD20_Kasp_open}, and the work on both these articles was initiated during a short visit to the Radboud University Nijmegen in late 2017, which was funded by the COST Action MP1405 QSPACE, supported by COST (European Cooperation in Science and Technology). 
The author thanks Walter van Suijlekom for his hospitality during this visit, and for interesting discussions. 
Thanks also to Bram Mesland and Matthias Lesch for interesting discussions.

\subsection{Notation}

Let $A$ and $B$ denote $\sigma$-unital $\Z_2$-graded $C^*$-algebras. 
By an approximate unit for $A$ we will always mean an even, positive, increasing, and contractive approximate unit for the $C^{*}$-algebra $A$. 
Let $E$ be a $\Z_2$-graded Hilbert module over $B$ (for an introduction to Hilbert modules and further details, see for instance \cite{Lance95,Blackadar98}). 
For $\xi\in E$, we consider the short-hand notation 
\[
\lla \xi \rra := \la \xi | \xi \ra ,
\]
where $\la \cdot | \cdot \ra$ denotes the $B$-valued inner product on $E$. 
We denote the set of adjointable operators on $E$ as $\End_B(E)$, and the subset of compact endomorphisms as $\End_B^0(E)$. 
For any operator $T$ on $E$, we write $\deg T=0$ if $T$ is even, and $\deg T=1$ if $T$ is odd. 
The graded commutator $[\cdot,\cdot]$ is defined (on homogeneous operators) by $[S,T] := ST - (-1)^{\deg S\cdot\deg T} TS$. 

For any $S,T\in\End_B(E)$ we will write $S \sim T$ if $S-T\in\End_B^0(E)$. Similarly, for self-adjoint $S,T$ we will write $S\gtrsim T$ if $S-T \sim P$ for some positive $P\in\End_B(E)$; in this case we will say that $S-T$ is \emph{positive modulo compact operators}. 

Given a $*$-homomorphism $A\to\End_B(E)$, an operator $T\in\End_B(E)$ is called \emph{locally compact} if $aT$ is compact for every $a\in A$. 

Given any regular operator $\D$, we define the \emph{bounded transform} $F_\D := \D (1+\D^*\D)^{-\frac12}$. 
The graph inner product of $\D$ is given for $\psi\in\Dom\D$ by 
\[
\la\psi|\psi\ra_\D := \la\psi|\psi\ra + \la\D\psi|\D\psi\ra , 
\]
and the corresponding graph norm is given by $\|\psi\|_\D^2 := \|\la\psi|\psi\ra_\D\|$.

\section{Preliminaries on \texorpdfstring{$\KK$}{KK}-theory}
\label{sec:prelim}

Kasparov \cite{Kas80b} defined the abelian group $\KK(A,B)$ as a set of homotopy equivalence classes of Kasparov $A$-$B$-modules. 
We start by briefly recalling the main definitions; for more details we refer to e.g.\ \cite[\S17]{Blackadar98}. 

\begin{defn}
\label{defn:Kasp_mod}
A (bounded) \emph{Kasparov $A$-$B$-module} $(A,{}_{\pi}E_B,F)$ is given by a $\Z_2$-graded countably generated right Hilbert $B$-module $E$, a ($\Z_2$-graded) $*$-homomorphism $\pi\colon A\to\End_B(E)$, and an odd adjointable endomorphism $F\in\End_B(E)$ such that for all $a\in A$: 
\[\pi(a)(F-F^*), \quad [F,\pi(a)], \quad \pi(a)(F^2-1)\in\End^{0}_{B}(E).\] 

Two Kasparov $A$-$B$-modules $(A,{}_{\pi_0}{E_0}_B,F_0)$ and $(A,{}_{\pi_1}{E_1}_B,F_1)$ are called \emph{unitarily equivalent} (denoted with $\simeq$) if there exists an even unitary in $\Hom_B(E_0,E_1)$ intertwining the $\pi_j$ and $F_j$ (for $j=0,1$). 

A \emph{homotopy} between $(A,{}_{\pi_0}{E_0}_B,F_0)$ and $(A,{}_{\pi_1}{E_1}_B,F_1)$ is given by a Kasparov $A$-$C([0,1],B)$-module $(A,{}_{\til\pi}{\til E}_{C([0,1],B)},\til F)$ such that (for $j=0,1$)
\[
\ev_j(A,{}_{\til\pi}{\til E}_{C([0,1],B)},\til F) \simeq (A,{}_{\pi_j}{E_j}_B,F_j) .
\]
Here $\simeq$ denotes unitary equivalence, and $\ev_t(A,{}_{\til\pi}{\til E}_{C([0,1],B)},\til F) := (A,{}_{\til\pi\hot1}{\til E\hot_{\rho_t}B}_B,\til F\hot1)$, where the $*$-homomorphism $\rho_t\colon C([0,1],B) \to B$ is given by $\rho_t(b) := b(t)$. 

A homotopy $(A,{}_{\til\pi}{\til E}_{C([0,1],B)},\til F)$ is called an \emph{operator-homotopy} if there exists a Hilbert $B$-module $E$ with a representation $\pi\colon A\to\End_B(E)$ such that $\til E$ equals the Hilbert $C([0,1],B)$-module $C([0,1],E)$ with the natural representation $\til\pi$ of $A$ on $C([0,1],E)$ induced from $\pi$, and if $\til F$ is given by a \emph{norm}-continuous family $\{F_t\}_{t\in[0,1]}$. 
A module $(\pi,E,F)$ is called \emph{degenerate} if $\pi(a)(F-F^*) = [F,\pi(a)] = \pi(a)(F^2-1) = 0$ for all $a\in A$. 

The $\KK$-theory $\KK(A,B)$ of $A$ and $B$ is defined as the set of homotopy equivalence classes of (bounded) Kasparov $A$-$B$-modules. 
Since homotopy equivalence respects direct sums, the direct sum of Kasparov $A$-$B$-modules induces a (commutative and associative) binary operation (`addition') on the elements of $\KK(A,B)$ such that $\KK(A,B)$ is in fact an abelian group \cite[\S4, Theorem 1]{Kas80b}. 

If no confusion arises, we often simply write $(A,E_B,F)$ instead of $(A,{}_{\pi}E_B,F)$, and its class in $\KK$-theory is simply denoted by $[F] \in \KK(A,B)$. 
\end{defn}

\subsection{The Kasparov product}

Let $A$ be a ($\Z_2$-graded) separable $C^*$-algebra, and let $B$ and $C$ be ($\Z_2$-graded) $\sigma$-unital $C^*$-algebras. It was shown by Kasparov \cite[\S4, Theorem 4]{Kas80b} that there exists an associative bilinear pairing, called the (internal) \emph{Kasparov product}:
\[
\KK(A,B) \times \KK(B,C) \to \KK(A,C) .
\]
Given two $\KK$-classes $[F_1] \in \KK(A,B)$ and $[F_2] \in \KK(B,C)$, the Kasparov product is denoted by $[F_1]\otimes_B[F_2]$. 

An important improvement 
was provided by Connes and Skandalis \cite{CS84}, who gave sufficient conditions which allow to check whether a given Kasparov module represents the Kasparov product. 
For convenience, let us first introduce some notation. Given a Hilbert $B$-module $E_1$ and a Hilbert $C$-module $E_2$ with a $*$-homomorphism $B\to\End_C(E_2)$, we consider the (graded) internal tensor product $E := E_1\hot_BE_2$. For any $\psi\in E_1$, we define the operator $T_\psi \colon E_2 \to E$ as $T_\psi \eta = \psi\hot\eta$ for any $\eta\in E_2$. The operator $T_\psi$ is adjointable, and its adjoint $T_\psi^*\colon E\to E_2$ is given by $T_\psi^* (\xi\otimes\eta) = \la\psi|\xi\ra\cdot\eta$. 
Furthermore, we also introduce the operator $\til T_\psi$ on the Hilbert $C$-module $E\oplus E_2$ given by 
\[
\til T_\psi := \mattwo{0}{T_\psi}{T_\psi^*}{0} . 
\]
We cite here a slightly more general version of the theorem by Connes and Skandalis. 
First, as explained by Kucerovsky \cite[Proposition 5]{Kuc97}, it suffices to check the connection condition for $\psi\in \pi_1(A)E_1$ (rather than all $\psi\in E_1$). 
Second, as described in the comments following \cite[Definition 18.4.1]{Blackadar98}, the positivity condition in fact only requires a lower bound greater than $-2$. 

\begin{thm}[{\cite[Theorem A.3]{CS84}}]
\label{thm:Connes-Skandalis}
Consider two Kasparov modules $(A,{}_{\pi_1}(E_1)_B,F_1)$ and $(B,{}_{\pi_2}(E_2)_C,F_2)$, and consider the Hilbert $C$-module $E := E_1\hot_B E_2$ and the $*$-homo\-mor\-phism $\pi := \pi_1\hot1 \colon A \to \End_C(E)$. 
Suppose that $(A,{}_\pi E_C,F)$ is a Kasparov module such that the following two conditions hold:
\begin{description}
\item[Connection condition:] for any $\psi\in \pi_1(A)\cdot E_1$, the graded commutator $[F\oplus F_2,\til T_\psi]$ is compact on $E\oplus E_2$; 
\item[Positivity condition:] there exists a $0\leq\kappa<2$ such that for all $a\in A$ we have that $\pi(a) [F_1\hot1,F] \pi(a^*) + \kappa\pi(aa^*)$ is positive modulo compact operators on $E$. 
\end{description}
Then $(A,{}_\pi E_C,F)$ represents the Kasparov product of $(A,{}_{\pi_1}{E_1}_B,F_1)$ and $(B,{}_{\pi_2}{E_2}_C,F_2)$:
\[
[F] = [F_1] \hot_B [F_2] \in \KK(A,C) .
\]
Moreover, an operator $F$ with the above properties always exists and is unique up to operator-homotopy. 
\end{thm}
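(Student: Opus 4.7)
The plan is to split the theorem into three stages: (i) \emph{existence} of an $F$ satisfying both conditions, (ii) \emph{uniqueness} up to operator homotopy of any such $F$, and (iii) \emph{identification} of the class $[F]$ with $[F_1]\hot_B[F_2]$. Stage (iii) is essentially automatic once (i) and (ii) are established, since one may simply define the Kasparov product through the construction in (i); the content of the theorem is then that any $F$ obeying the two conditions is operator-homotopic to that constructed representative, and hence lies in the same class.

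For existence, I would apply Kasparov's stabilisation theorem to realise $E_2$ as a direct summand of the standard Hilbert $B$-module $\mH_B$, produce an $F_1$-connection $G$ on $E_1\hot_B\mH_B$ from a diagonal extension of $F_2$, and then invoke the Kasparov technical theorem to select an even, positive, contractive approximate unit $(u_n)\subset\End_B^0(E_2)$ which is quasi-central with respect to the countable subset generated by $F_1$, $F_2$, $\pi_1(A)$, $\pi_2(B)$, and $G$. For suitable $n$ one then sets
\[
F := (F_1\hot 1)(1 - 1\hot u_n)^{1/2} + G\,(1\hot u_n)^{1/2}
\]
(or a graded variant thereof). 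Quasi-centrality ensures $F^2-1$, $[F,\pi(a)]$ and $\pi(a)(F-F^*)$ are compact; the connection condition is built in through $G$; and the graded commutator $[F,F_1\hot 1]$ equals, modulo compacts, a non-negative expression, giving the positivity condition with $\kappa=0$.

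For uniqueness, suppose $F$ and $F'$ both satisfy the two conditions, and consider the straight-line homotopy $F_t := (1-t)F + tF'$. The connection condition, compactness of $[F_t,\pi(a)]$, and the positivity condition (with the same $\kappa$) are all preserved under convex combinations. The delicate point is to ensure that $F_t^2 - 1$ remains compact for every $t\in[0,1]$, which amounts to showing $FF' + F'F \sim 2$ modulo compacts. Applying the positivity hypothesis to $F-F'$, respectively to $F+F'$, via a polarisation argument, yields
\[
\pi(a)(FF' + F'F)\pi(a^*) \gtrsim (2-\kappa)\,\pi(aa^*) .
\]
The strict inequality $\kappa<2$ is precisely what is needed to keep $F_t$ inside the class of Kasparov modules throughout the homotopy, which also explains the appearance of the bound $\kappa<2$ in the statement.

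The main obstacle is exactly this verification that the linear homotopy lies in the space of Kasparov modules; everything else is bookkeeping with compact perturbations and the Kasparov technical theorem. If this direct approach fails for boundary reasons at $\kappa$ close to $2$, one can replace the straight-line homotopy by a rotation homotopy on $E\oplus E$ interpolating between $F\oplus 0$ and $0\oplus F'$ through $\cos(\pi t/2)F + \sin(\pi t/2)F'$-type combinations, for which the square-minus-one computation separates into a positive quadratic form on the commutator.
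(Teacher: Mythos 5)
First, a point of comparison: the paper does not prove this theorem at all. It is quoted from Connes--Skandalis \cite[Theorem A.3]{CS84}, with the two stated refinements attributed to \cite[Proposition 5]{Kuc97} and to the remarks following \cite[Definition 18.4.1]{Blackadar98}. So your proposal must be measured against the classical argument rather than against anything in this article. Your overall architecture --- existence via the Kasparov technical theorem and an $F_2$-connection, uniqueness via an operator homotopy controlled by positivity, identification of the class as a formal consequence of how the product is defined --- is the standard one, and the existence sketch is essentially right, modulo the detail that the cutoff must be an operator $M\in\End_C(E)$ supplied by the technical theorem on $E=E_1\hot_BE_2$, not an element $1\hot u_n$ with $u_n\in\End_B^0(E_2)$ (the latter is not even well defined on the internal tensor product unless $u_n$ commutes with the left $B$-action).

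The genuine gap is in the uniqueness step. The positivity hypothesis bounds the graded commutators $[F_1\hot1,F]$ and $[F_1\hot1,F']$ from below; it says nothing directly about $[F,F']=FF'+F'F$, and the polarisation you propose cannot produce $\pi(a)(FF'+F'F)\pi(a^*)\gtrsim(2-\kappa)\pi(aa^*)$: the operator $[F_1\hot1,F-F']$ is a \emph{difference} of two operators each bounded below by $-\kappa\pi(aa^*)$ modulo compacts, which gives no control at all. The actual content of \cite[Lemmas A.1--A.3]{CS84} (or \cite[18.4.3]{Blackadar98}) is to combine the positivity conditions with the \emph{connection} conditions: since $F$ and $F'$ are both $F_2$-connections, $F-F'$ is a $0$-connection, hence ``supported along the $E_1$-direction'' modulo compacts, and only there can the positivity relative to $F_1\hot1$ be transferred into a lower bound $\pi(a)[F,F']\pi(a^*)\gtrsim-\kappa'\pi(aa^*)$ with $\kappa'<2$. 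Your sketch does not use the connection condition in the uniqueness stage, so this transfer is missing. Moreover, even once that bound is available, the segment $F_t=(1-t)F+tF'$ only satisfies $\pi(a)(F_t^2-1)\pi(a^*)\gtrsim -t(1-t)\kappa'\pi(aa^*)$, i.e.\ $F_t^2-1$ is bounded below modulo compacts after localisation but is not compact; one still needs the normalisation lemma of Skandalis (\cite[Lemma 11]{Ska84}, \cite[17.2.7]{Blackadar98}) to convert such a path into a genuine operator homotopy of Kasparov modules. You correctly identify $F_t^2-1$ as the delicate point and correctly locate the role of $\kappa<2$, but the proposed resolution does not close it.
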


\subsection{Half-closed modules}
\label{sec:half-closed}

Let $A$ and $B$ be $\Z_2$-graded $\sigma$-unital $C^*$-algebras, and let $E$ be a $\Z_2$-graded countably generated Hilbert $B$-module.  
For any densely defined operator $\D$ on $E$, we consider the following subspaces of $\End_B(E)$:
\begin{align*}
\Lip(\D) &:= \big\{ T\in\End_B(E) : T\cdot\Dom\D\subset\Dom\D , \text{ and $[\D,T]$ is bounded on $\Dom\D$} \big\} , \\
\Lip^*(\D) &:= \big\{ T\in\Lip(\D) : T\cdot\Dom\D^*\subset\Dom\D \big\} .
\end{align*}
If $\D^*$ is also densely defined, we note that $T\in\Lip(\D)$ implies $T^*\in\Lip(\D^*)$, and then $-[\D,T]^*$ equals the closure of $[\D^*,T^*]$ (see \cite[Lemma 2.1]{Hil10}).
Moreover, if $\D$ and $T$ are symmetric, then we have $\bar{[\D^*,T]} = \bar{[\D,T]}$. 

\begin{defn}[{\cite[\S2]{Hil10}}]
\label{defn:Hilsum}
A {half-closed $A$-$B$-module} $(\A,E_B,\D)$ is given by a $\Z_2$-graded countably generated Hilbert $B$-bimodule $E$, an odd regular symmetric operator $\D$ on $E$, a $*$-homomorphism $A\to\End_B(E)$, and a dense $*$-subalgebra $\A\subset A$ such that 
\begin{enumerate}
\item $\A\subset\Lip^*(\D)$; 
\item $(1+\D^*\D)^{-1}$ is locally compact. 
\end{enumerate}
If furthermore $\D$ is self-adjoint, then $(\A,E_B,\D)$ is called an \emph{unbounded Kasparov $A$-$B$-module}. 
\end{defn}

Unbounded Kasparov modules were first introduced by Baaj and Julg \cite{BJ83}, who proved that their bounded transforms yield Kasparov modules. This statement was generalised to half-closed modules by Hilsum. 

\begin{thm}[{\cite[Theorem 3.2]{Hil10}}]
\label{thm:Hilsum}
Let $(\A,E_B,\D)$ be a half-closed $A$-$B$-module, and consider a closed extension $\D\subset\hat\D\subset\D^*$. Then the bounded transform $F_{\hat\D} = \hat\D(1+\hat\D^*\hat\D)^{-\frac12}$ yields a Kasparov $A$-$B$-module $(A,E_B,F_{\hat\D})$, and its class is independent of the choice of the extension $\hat\D$. 
\end{thm}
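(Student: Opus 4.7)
The plan is to verify the three compactness axioms of a Kasparov module for $F := F_{\hat\D}$, and then to deduce independence of the $\KK$-class by an operator-homotopy. I would start by recording the standard properties of the bounded transform of a regular closed operator: $F \in \End_B(E)$ with $\|F\|\leq 1$, $F^* = F_{\hat\D^*}$, and $1 - F^*F = (1+\hat\D^*\hat\D)^{-1}$, $1 - FF^* = (1+\hat\D\hat\D^*)^{-1}$. Since $\D$ is closed and symmetric, taking adjoints in $\D\subset\hat\D\subset\D^*$ gives $\D\subset\hat\D^*\subset\D^*$, so $\hat\D^*$ is another closed extension of $\D$ in the same range; consequently $(1+\hat\D^*\hat\D)^{-1}$ and $(1+\hat\D\hat\D^*)^{-1}$ both send $E$ into $\Dom\hat\D\cap\Dom\hat\D^*$, where elements of $\A$ can be commuted past using the Lipschitz hypothesis.

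The main analytic tool is the integral representation
\[
F_{\hat\D} = \frac{1}{\pi}\int_0^\infty \mu^{-1/2}\,\hat\D\,R_\mu\,d\mu, \qquad R_\mu := (1+\mu+\hat\D^*\hat\D)^{-1},
\]
together with the uniform bounds $\|R_\mu\|\leq(1+\mu)^{-1}$ and $\|\hat\D R_\mu\|\leq(1+\mu)^{-1/2}$, which reduce every Kasparov-module axiom to an integrable norm estimate on expressions built from $R_\mu$, $\hat\D R_\mu$, and their commutators with $a\in\A$.

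The key step, and what I expect to be the main obstacle, is to transfer the local compactness hypothesis from $\D$ to $\hat\D$. Given $a\in\A$, the condition $\A\subset\Lip^*(\D)$ provides $a\cdot\Dom\D^*\subset\Dom\D$ together with a bounded extension of $[\D,a]$ on $\Dom\D^*$. Combined with the second resolvent identity, this lets one rewrite $\pi(a)R_\mu$ in terms of $(1+\mu+\D^*\D)^{-1}\pi(a')$ for certain $a'\in\A$ (after commuting $a$ past the unbounded pieces via the Lipschitz hypothesis), whose compactness follows from the assumption that $(1+\D^*\D)^{-1}$ is locally compact. Feeding this into the integral formula yields compactness of $\pi(a)F$ and, by analogous manipulations for commutators and differences with $F_{\hat\D^*}$, compactness of $[F,\pi(a)]$ and $\pi(a)(F-F^*)$; finally $\pi(a)(F^2-1) = \pi(a)F(F-F^*) - \pi(a)(1+\hat\D\hat\D^*)^{-1}$ combines both sorts of estimate. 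Extension from $a\in\A$ to $a\in A$ then follows by norm-density of $\A\subset A$ and $\|F\|\leq 1$.

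For the independence statement, given two closed extensions $\hat\D_0$ and $\hat\D_1$ of $\D$ lying between $\D$ and $\D^*$, the same resolvent/integral techniques applied to the difference $F_{\hat\D_1} - F_{\hat\D_0}$ yield that $\pi(a)(F_{\hat\D_1} - F_{\hat\D_0})$ is compact for every $a\in A$. Consequently the straight-line family $F_t := (1-t)F_{\hat\D_0} + tF_{\hat\D_1}$ is a norm-continuous path of odd adjointable contractions for which $(A,E_B,F_t)$ is a Kasparov module for every $t\in[0,1]$, providing the required operator-homotopy between $F_{\hat\D_0}$ and $F_{\hat\D_1}$.
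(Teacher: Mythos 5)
First, a point of reference: the paper does not prove this statement at all --- it is imported verbatim from Hilsum, so there is no in-paper proof to compare against; the relevant comparison is with Hilsum's original argument and with the fragments of that toolkit the paper does develop (\cref{lem:integral_formula}, \cref{lem:comm_cpt}, \cref{lem:adjoint_cpt_res}, \cref{lem:local_comparison}). Your overall strategy --- the algebraic identities for the bounded transform, the integral representation, transfer of local compactness from $\D$ to $\hat\D$ via the $\Lip^*$ hypothesis, and a straight-line operator homotopy once $\pi(a)(F_{\hat\D_1}-F_{\hat\D_0})$ is known to be compact --- is exactly the standard one, and the homotopy argument at the end is correct as you state it.

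However, the step you yourself flag as the main obstacle is justified by a mechanism that does not exist. There is no usable ``second resolvent identity'' relating $(1+\mu+\hat\D^*\hat\D)^{-1}$ to $(1+\mu+\D^*\D)^{-1}$: these are resolvents of two distinct self-adjoint operators with incomparable domains, and the hypothesis $\A\subset\Lip^*(\D)$ provides only one order of regularity ($a\cdot\Dom\D^*\subset\Dom\D$), not the two orders needed to move $\pi(a)$ between $\Dom(\hat\D^*\hat\D)$ and $\Dom(\D^*\D)$; so no rewriting of $\pi(a)R_\mu$ as $(1+\mu+\D^*\D)^{-1}\pi(a')+\cdots$ is available. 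The correct mechanism is a factorization at the level of \emph{first-order} domains, as in \cref{lem:adjoint_cpt_res}: $(1+\hat\D^*\hat\D)^{-\frac12}$ is bounded from $E$ into $\Dom\hat\D\subset\Dom\D^*$ (graph norm), $a\colon\Dom\D^*\to\Dom\D$ is bounded and adjointable by \cite[Lemma 2.2]{Hil10}, and $a\colon\Dom\D\into E$ is compact because $a(1+\D^*\D)^{-\frac12}$ is; composing gives compactness of $a^2(1+\hat\D^*\hat\D)^{-\frac12}$ and hence, by density, of $\pi(a)(1-F^*F)^{\frac12}$ for all $a\in A$. Three further points are glossed over: (i) you must first establish that $\hat\D$ is \emph{regular} (this is not automatic for closed operators on Hilbert modules and is a separate lemma in \cite{Hil10}); (ii) the naive bound $\mu^{-1/2}\|\hat\D R_\mu\|\lesssim \mu^{-1/2}(1+\mu)^{-1/2}$ is \emph{not} integrable at infinity, so the integral arguments only close once the commutator/compact factors are inserted to gain the extra decay $\mO(\mu^{-1})$, as in \cref{lem:comm_cpt}; and (iii) the compactness of $\pi(a)(F_{\hat\D_1}-F_{\hat\D_0})$, asserted in one line, is the entire content of the independence claim (\cite[Lemma 3.1]{Hil10}) and requires a genuine computation of the type carried out in \cref{lem:local_comparison}.
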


The following theorem by Kucerovsky provides an analogous version of the Connes-Skandalis result (\cref{thm:Connes-Skandalis}) for the Kasparov product of \emph{unbounded Kasparov modules}. (Note that we have simplified the domain condition using \cite[Lemma 10(i)]{Kuc97}.) 

\begin{thm}[{\cite[Theorem 13]{Kuc97}}]
\label{thm:Kucerovsky}
Let $(\A, {}_{\pi_1}(E_1)_B,\D_1)$ and $(\B,{}_{\pi_2}(E_2)_C,\D_2)$ be unbounded Kasparov modules. 
Suppose that $(\A,{}_{\pi_1\hot1}(E_1\hot_BE_2)_C,\D)$ is an unbounded Kasparov module such that:
\begin{enumerate}
\item for all $\psi$ in a dense subspace of $\A\cdot\Dom\D_1$, we have \(\til T_\psi \in \Lip(\D\oplus\D_2)\). 
\item we have the domain inclusion $\Dom\D\subset\Dom\D_1\hot1$;
\item there exists $c\in\R$ such that for all $\psi\in\Dom(\D)$ we have 
\[
\la(\D_1\hot1)\psi \mvert \D \psi\ra + \la\D \psi \mvert (\D_1\hot1)\psi\ra \geq c \la\psi|\psi\ra . 
\]
\end{enumerate}
Then $(\A,{}_{\pi_1\hot1}(E_1\hot_BE_2)_C,\D)$ represents the Kasparov product of $(\A, {}_{\pi_1}(E_1)_B,\D_1)$ and $(\B,{}_{\pi_2}(E_2)_C,\D_2)$. 
\end{thm}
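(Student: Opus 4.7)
The strategy is to reduce to the Connes-Skandalis theorem (\cref{thm:Connes-Skandalis}) by passing to bounded transforms. Set $F := F_\D$, $F_1 := F_{\D_1}$, and $F_2 := F_{\D_2}$; by \cref{thm:Hilsum} specialised to the self-adjoint case, these represent the Kasparov classes of the three unbounded modules, and one has $F_{\D\oplus\D_2} = F\oplus F_2$. For the connection condition I would use the integral representation
\[
F_D \;=\; \frac{1}{\pi}\int_0^\infty \lambda^{-1/2}\, D(1+\lambda+D^*D)^{-1}\, d\lambda
\]
applied to $D := \D\oplus\D_2$. Hypothesis (1) gives $\til T_\psi\in\Lip(\D\oplus\D_2)$, so commuting $\til T_\psi$ through the integrand produces terms of the form $R_\lambda\,[\D\oplus\D_2,\til T_\psi]\,R_\lambda$ with $R_\lambda := (1+\lambda+(\D\oplus\D_2)^2)^{-1}$. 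Since $\psi$ is drawn from $\A\cdot\Dom\D_1$, an element of $\A$ can be extracted from $\til T_\psi$, and the local compactness of $R_\lambda$ then makes each integrand compact; an $O(\lambda^{-3/2})$ tail estimate delivers norm convergence to a compact operator, yielding the connection condition for $F\oplus F_2$.

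The main step is the positivity condition: produce $\kappa<2$ such that $\pi(a)[F_1\hot1, F]\pi(a^*) + \kappa\pi(aa^*) \gtrsim 0$ for all $a\in\A$ (and hence, by density, for all $a\in A$). The graded commutator $[F_1\hot1, F]$ is the anti-commutator of two odd contractions, so it is automatically bounded below by $-2$, and the task is to strictly improve this. I would write $(F_1\hot1)F + F(F_1\hot1)$ as a double integral over $\lambda,\mu\geq 0$ of products of the resolvents of $(\D_1\hot1)^2$ and $\D^*\D$, arranged around the inner factor $(\D_1\hot1)\D + \D(\D_1\hot1)$. The domain inclusion (2) guarantees this inner anti-commutator is defined on $\Dom\D$, and hypothesis (3) supplies the lower bound $c$ for it there. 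Carrying this bound through the integrals, sandwiching with $\pi(a)$, and absorbing the commutator remainders (generated by swapping $\pi(a)$ or $\D_1$-resolvents past $\D$-resolvents) into compacts via local compactness of $(1+\D^*\D)^{-1}$ should then produce the desired strict inequality.

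I expect the positivity estimate to be the main obstacle. Two points are delicate: first, justifying the algebraic identities under the integrals on the common dense core $\Dom\D$ — which is precisely where the domain inclusion (2) is essential, ensuring $(\D_1\hot1)\D$ and $\D(\D_1\hot1)$ are both meaningful on the same space; and second, verifying that the resulting constant is genuinely strictly less than $2$, which exploits the strict contractivity $\|F_{\D_1\hot1}\xi\|<\|\xi\|$ for $\xi$ in the dense range of $(1+(\D_1\hot1)^2)^{-1/2}$, so that any finite lower bound from (3) (regardless of the sign of $c$) translates, after the integration, into a gap from $-2$. Once both Connes-Skandalis conditions are established, \cref{thm:Connes-Skandalis} yields $[F] = [F_1]\hot_B[F_2] \in \KK(A,C)$, which is the claim.
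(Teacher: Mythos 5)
Your overall strategy --- reduce to \cref{thm:Connes-Skandalis} via bounded transforms, handle the connection condition with the integral formula for $F_{\D\oplus\D_2}$ and the Lipschitz hypothesis, and handle positivity by writing the anti-commutator $(F_{\D_1}\hot1)F_\D+F_\D(F_{\D_1}\hot1)$ as a double resolvent integral around the inner form $\la(\D_1\hot1)\psi\mvert\D\psi\ra+\la\D\psi\mvert(\D_1\hot1)\psi\ra$ --- is exactly the route the paper takes (it proves the stronger \cref{thm:Kucerovsky_revisited}, via \cref{lem:positivity_local_expr,lem:integral_estimate_first-order,prop:revisited_positivity}, and this theorem is the special case). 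The connection part and the algebraic identity under the integral are fine.

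However, your final step contains a genuine gap. What the double-integral argument actually yields is the operator inequality $[F_{\D_1}\hot1,F_\D]\geq c$ with the \emph{same} constant $c$ as in hypothesis (3) (after the estimate $\sum_m\iint(\mu\lambda)^{-1/2}\lla M_m(\lambda,\mu)\psi\rra\,d\lambda\,d\mu\leq\la\psi|\psi\ra$). If $c<-2$, this is weaker than the trivial bound and does not verify the Connes--Skandalis positivity condition, which needs some $\kappa<2$. Your proposed remedy --- that the strict contractivity $\|F_{\D_1\hot1}\xi\|<\|\xi\|$ on a dense subspace converts any finite $c$ into a gap from $-2$ --- does not work: whenever $\D_1$ is unbounded one has $\|F_{\D_1}\|=1$, and strict contractivity on a dense set gives no uniform improvement of the bound $[F_{\D_1}\hot1,F_\D]\geq-2$. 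The missing idea (used both by Kucerovsky and in \cref{prop:revisited_positivity}) is to \emph{rescale}: replace $\D_1$ by $\alpha\D_1$ for small $\alpha>0$. This leaves the class $[\D_1]=[\alpha\D_1]$ and the connection condition untouched, but multiplies the inner form --- and hence the constant in the lower bound --- by $\alpha$, so that $\alpha|c|<\kappa<2$ for $\alpha$ small enough, after which \cref{thm:Connes-Skandalis} applies. Without this rescaling step the positivity condition is simply not established.
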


In \cref{sec:Kasp_prod_revisited} we will show that the lower bound in the positivity condition (3) can be weakened to a \emph{relative} bound. Moreover, using the localisation procedure from \cref{sec:localisations}, we will provide several variants of the above theorem for half-closed modules in \cref{sec:Kasp_prod}.

\subsection{Regular symmetric operators}
\label{sec:reg_symm}

Let $B$ be a $\Z_2$-graded $\sigma$-unital $C^*$-algebra, and let $E$ be a $\Z_2$-graded countably generated Hilbert $B$-module.  
Throughout this subsection, we consider a regular symmetric operator $\D$ on $E$ and a positive number $r\in(0,\infty)$. 
For $\lambda\in[0,\infty)$, we introduce the notation 
\[
R_\D^r(\lambda) := (r^2+\lambda+\D^*\D)^{-1} . 
\]
We recall that we have the integral formula
\begin{align}
\label{eq:integral_formula}
(r^2+\D^*\D)^{-\frac12} = \frac1\pi \int_0^\infty \lambda^{-1/2} R_\D^r(\lambda) d\lambda ,
\end{align}
where the integral converges in norm. 
Let us consider the `bounded transform'
\[
F_\D^r := \D (r^2+\D^*\D)^{-\frac12} . 
\]
The following lemma is exactly as in \cite[Lemma 2.7]{vdD20_Kasp_open}, except that we have replaced $1+\D^*\D$ by $r^2+\D^*\D$. 

\begin{lem}
\label{lem:integral_formula}
For all $\psi\in E$ we have
\[
\frac1\pi \int_0^\infty \lambda^{-1/2} \D R_\D^r(\lambda) \psi d\lambda = F_\D^r \psi .
\]
Moreover, for any continuous function $f\colon\R\to\R$ such that $f(x^2)(1+x^2)^{-\frac12}$ is bounded, we also have 
\[
\frac1\pi \int_0^\infty \lambda^{-1/2} f(\D^*\D) R_\D^r(\lambda) \psi d\lambda = f(\D^*\D)(r^2+\D^*\D)^{-1/2} \psi .
\]
\end{lem}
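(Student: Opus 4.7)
The plan is to reduce both identities to the cited integral formula $(r^2+\D^*\D)^{-1/2}\psi = \frac1\pi\int_0^\infty\lambda^{-1/2}R_\D^r(\lambda)\psi\,d\lambda$, combined with the closedness of $\D$ for the first identity and the bounded functional calculus of $\D^*\D$ for the second.

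For the first identity, I would start from the norm bound $\|\D R_\D^r(\lambda)\| \leq \tfrac{1}{2\sqrt{r^2+\lambda}}$. This is obtained via
\[
\|\D R_\D^r(\lambda)\psi\|^2 = \big\la\psi \bigmvert R_\D^r(\lambda)\D^*\D R_\D^r(\lambda)\psi\big\ra,
\]
together with bounded functional calculus applied to $s\mapsto s/(r^2+\lambda+s)^2$ on $[0,\infty)$, whose supremum equals $1/(4(r^2+\lambda))$. Consequently $\lambda^{-1/2}\|\D R_\D^r(\lambda)\psi\|$ is integrable on $(0,\infty)$. For $0<a<b<\infty$, the Bochner integral $\int_a^b\lambda^{-1/2}R_\D^r(\lambda)\psi\,d\lambda$ is a limit of Riemann sums whose terms lie in $\Dom\D$; since $\D$ is closed and $\int_a^b\lambda^{-1/2}\D R_\D^r(\lambda)\psi\,d\lambda$ exists in $E$, one concludes that the integral itself lies in $\Dom\D$ with
\[
\D\int_a^b\lambda^{-1/2}R_\D^r(\lambda)\psi\,d\lambda = \int_a^b\lambda^{-1/2}\D R_\D^r(\lambda)\psi\,d\lambda.
\]
Passing $a\downarrow 0$ and $b\uparrow\infty$, the argument of $\D$ converges to $\pi(r^2+\D^*\D)^{-1/2}\psi$ by the cited formula, while the right-hand side converges in $E$. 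Closedness of $\D$ then identifies the limit as $\pi F_\D^r\psi$, proving the first identity.

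For the second identity, the idea is to extract the bounded operator $S := f(\D^*\D)(r^2+\D^*\D)^{-1/2}$. The hypothesis that $f(x^2)(1+x^2)^{-1/2}$ is bounded is equivalent to the continuous function $s\mapsto f(s)/\sqrt{r^2+s}$ being bounded on $[0,\infty)$, so $S$ is well-defined by bounded functional calculus of $\D^*\D$. Since $S$ commutes with every bounded function of $\D^*\D$, one has the factorisation $f(\D^*\D)R_\D^r(\lambda)\psi = S\cdot(r^2+\D^*\D)^{1/2}R_\D^r(\lambda)\psi$ for all $\psi\in E$ and $\lambda>0$. Pulling $S$ through a truncated integral then gives
\[
\int_0^N\lambda^{-1/2}f(\D^*\D)R_\D^r(\lambda)\psi\,d\lambda = S\cdot h_N(\D^*\D)\psi,
\]
where $h_N(s) := \int_0^N\lambda^{-1/2}\sqrt{r^2+s}/(r^2+\lambda+s)\,d\lambda$. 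The substitution $\lambda=(r^2+s)t^2$ evaluates $h_N(s)=2\arctan\sqrt{N/(r^2+s)}$, so $\|h_N\|_\infty\leq\pi$ and $h_N(s)\to\pi$ pointwise as $N\to\infty$. By the bounded-convergence property of the functional calculus of $\D^*\D$, $h_N(\D^*\D)\psi\to\pi\psi$ in $E$, whence the integral converges to $\pi S\psi = \pi f(\D^*\D)(r^2+\D^*\D)^{-1/2}\psi$, yielding the second identity.

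The main obstacle is the second identity: unlike the first, its integrand need not be Bochner-integrable on $(0,\infty)$, because $\|f(\D^*\D)R_\D^r(\lambda)\|$ can decay only like $1/\sqrt{r^2+\lambda}$ (giving a non-integrable $1/\lambda$ after multiplication by $\lambda^{-1/2}$). One therefore cannot prove the formula by a direct norm estimate, and must instead interpret the integral vector-wise and rely on pointwise convergence inside the spectral calculus of $\D^*\D$—a step made explicit by the arctangent substitution that identifies the spectral profile $h_N$.
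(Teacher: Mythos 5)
Your second argument (the factorisation through a bounded operator $S$ plus the arctangent computation of $h_N$) is sound and is the right mechanism for both identities. The problem is that your first argument rests on a false integrability claim, and the failure is exactly the borderline $1/\lambda$ behaviour that you correctly diagnose for the \emph{second} identity in your closing paragraph but overlook in the first.

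Concretely: the bound $\|\D R_\D^r(\lambda)\|\leq\tfrac{1}{2\sqrt{r^2+\lambda}}$ is correct, but it gives $\lambda^{-1/2}\|\D R_\D^r(\lambda)\psi\|\leq\tfrac{1}{2}\lambda^{-1/2}(r^2+\lambda)^{-1/2}\|\psi\|$, which decays like $\tfrac{1}{2}\lambda^{-1}$ and is \emph{not} integrable at infinity. Nor does fixing $\psi$ help in general: take $E=\ell^2(\N)$, $\D$ diagonal with entries $4^k$, and $\psi=(1/k)_{k}$; then the contribution of the interval $[16^k,2\cdot 16^k]$ to $\int_0^\infty\lambda^{-1/2}\|\D R_\D^r(\lambda)\psi\|\,d\lambda$ is bounded below by a positive constant times $1/k$, so the integral diverges. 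Hence the first integral converges only as an improper (strongly convergent) integral, your assertion that ``the right-hand side converges in $E$'' is unjustified as written, and the limit $b\uparrow\infty$ in your closed-graph argument cannot be taken. (This reading is consistent with how the paper uses the lemma elsewhere: in \cref{lem:comm_cpt} the author needs the commutator to supply an extra $\mO(\lambda^{-1})$ of decay before claiming norm convergence, and in \cref{lem:B} the integral of $\lambda^{-1/2}\D_v R_{\D_v}^r(\lambda)$ is said to converge only strongly.) The repair is immediate and already contained in your second argument: write $\D R_\D^r(\lambda)=F_\D^r\,(r^2+\D^*\D)^{1/2}R_\D^r(\lambda)$, with the bounded operator $F_\D^r$ playing the role of $S$, so that $\int_0^N\lambda^{-1/2}\D R_\D^r(\lambda)\psi\,d\lambda=F_\D^r\,h_N(\D^*\D)\psi\to\pi F_\D^r\psi$ by your arctangent computation; this also renders the Riemann-sum/closedness discussion unnecessary.
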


The following lemma was proven on Hilbert spaces in \cite[Proposition A.1]{Les05}, and it was shown in \cite[Lemma 7.7]{LM19} that the argument can be generalised to Hilbert modules. 

\begin{lem}[{cf.\ \cite[Lemma 7.7]{LM19}}]
\label{lem:interpolation}
Let $P$ be an invertible regular positive self-adjoint operator on $E$, and let $T$ be a symmetric operator on $E$ with $\Dom P \subset \Dom T$. 
If $TP^{-1}$ is bounded, then the densely defined operator $P^{-\frac12}TP^{-\frac12}$ extends to an adjointable endomorphism on $E$, and $\|P^{-\frac12}TP^{-\frac12}\| \leq \|TP^{-1}\|$. 
\end{lem}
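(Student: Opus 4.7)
The proof strategy is Lesch's classical three-lines argument, adapted to the Hilbert module setting (which is exactly what was done in \cite[Lemma 7.7]{LM19}). The plan is to reduce the assumption $TP^{-1}\in\End_B(E)$ to a two-sided boundedness statement, then interpolate on a suitable operator-valued analytic family.

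First I would observe that $P^{-1}T$ extends to a bounded endomorphism of the same norm. Since $T$ is symmetric and $TP^{-1}$ is bounded, $(TP^{-1})^*$ is defined on all of $E$; on the dense subspace $\Dom P \subset \Dom T$ it agrees with $P^{-1}T$, so $P^{-1}T$ extends uniquely to an element of $\End_B(E)$ with $\|P^{-1}T\|=\|TP^{-1}\|$. In particular, the operator $P^{-\frac12}TP^{-\frac12}$ is densely defined on $P^{\frac12}\Dom P$, which is dense in $E$ because $P$ is regular.

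Next I would introduce, for $z$ in the closed strip $S=\{w\in\C : 0\leq \Re w\leq 1\}$, the operator family
\[
F(z) := P^{-z}\,T\,P^{z-1},
\]
using the functional calculus for the positive regular self-adjoint operator $P$ to define the complex powers $P^{\pm z}$. On the boundary line $\Re z=0$ one has $F(it)=P^{-it}(TP^{-1})P^{it}$, and since $P^{\pm it}$ is unitary (regular), $\|F(it)\|\leq\|TP^{-1}\|$. On the boundary $\Re z=1$ one has $F(1+it)=P^{-it}(P^{-1}T)P^{it}$, so $\|F(1+it)\|\leq\|P^{-1}T\|=\|TP^{-1}\|$ by the first step.

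The interpolation step is then carried out by a standard Phragm\'en--Lindel\"of argument. For $\xi\in E$ and $\eta$ in a suitable dense subspace (e.g., a spectral subspace of $P$ stable under $P^z$ for all $z\in S$), the $B$-valued function $g(z):=\la\xi\mvert F(z)\eta\ra$ is norm-continuous on $S$ and holomorphic on its interior. Composing with an arbitrary state $\omega$ on $B$ and inserting a regulariser of the form $e^{\varepsilon z(z-1)}$ to obtain the required growth control, Hadamard's three-lines lemma yields
\[
|\omega\la\xi\mvert F(1/2)\eta\ra|\leq \|TP^{-1}\|\,\|\xi\|\,\|\eta\|,
\]
and letting $\varepsilon\to 0$ and taking the supremum over all states $\omega$ shows $\|\la\xi\mvert F(1/2)\eta\ra\|\leq \|TP^{-1}\|\,\|\xi\|\,\|\eta\|$. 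This gives the claimed bound on $P^{-\frac12}TP^{-\frac12}$ on a dense domain, and the operator extends to an adjointable endomorphism (it is manifestly symmetric on its natural domain).

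The main obstacle is purely technical and lies entirely in the Phragm\'en--Lindel\"of step: one must justify that $F(z)$ has the required holomorphy and a priori growth control on the whole strip, not merely on the boundary, in spite of the fact that the natural domains of $F(z)$ shrink as $|\Im z|$ grows. This is where the regulariser and the restriction to spectral subspaces of $P$ are needed, and this is precisely the technical content supplied in \cite[Lemma 7.7]{LM19}, which we may quote.
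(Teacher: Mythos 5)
Your proposal is correct and follows essentially the same route as the paper, which offers no independent proof but simply invokes \cite[Lemma 7.7]{LM19} (the Hilbert-module generalisation of Lesch's three-lines interpolation argument \cite[Proposition A.1]{Les05}) --- precisely the argument you reconstruct, and whose technical core you likewise quote from that reference. Two cosmetic corrections: the domain obstruction in the interior of the strip is governed by $\Re z$ rather than $|\Im z|$ (for $0<\Re z<1$ one only knows $P^{z-1}\eta\in\Dom(P^{1-\Re z})$, which need not lie in $\Dom T$), and in the Hilbert-module setting genuine spectral projections of $P$ need not exist, so the dense subspace should instead be taken of the form $\phi(P)E$ with $\phi$ continuous and compactly supported.
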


\begin{lem}
\label{lem:comm_cpt}
Let $a=a^*\in\Lip(\D)$, and consider $b\in\End_B(E)$ such that $(1+\D^*\D)^{-\frac12}b$ is compact on $E$. 
Then the following statements hold:
\begin{enumerate}
\item \label{lem:comm_cpt_R}
The operators $[\D R_\D^r(\lambda),a] b$ and $(1+\lambda)^{\frac12} [R_\D^r(\lambda),a] b$ are compact and of order $\mO(\lambda^{-1})$. 
\item \label{lem:comm_cpt_F}
The operator $[F_\D^r,a] b$ is compact. 
\end{enumerate}
\end{lem}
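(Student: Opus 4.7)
The plan for (1) is to expand $[R_\D^r(\lambda), a]$ via the standard resolvent identity and track where compactness enters through the factor $b$. Using the remarks preceding \cref{defn:Hilsum}, the assumption $a = a^* \in \Lip(\D)$ gives bounded extensions $\bar{[\D, a]}, \bar{[\D^*, a]} \in \End_B(E)$. Functional calculus on $\D^*\D$ supplies the estimates $\|R_\D^r(\lambda)\| \leq (r^2+\lambda)^{-1}$, $\|\D R_\D^r(\lambda)\| \leq (r^2+\lambda)^{-1/2}$, and $\|\D R_\D^r(\lambda)\D^*\| \leq 1$ (the last via $\D^*\D R_\D^r(\lambda) = 1 - (r^2+\lambda)R_\D^r(\lambda)$); in particular $R_\D^r(\lambda)\D^* = (\D R_\D^r(\lambda))^*$ extends to a bounded operator on $E$. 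A verification on a dense subspace (such as $\Ran R_\D^r(\lambda)^2$), extended by continuity, establishes the identity of bounded operators
\begin{align*}
[R_\D^r(\lambda), a] = -R_\D^r(\lambda)\,\bar{[\D^*, a]}\,\D R_\D^r(\lambda) - R_\D^r(\lambda)\D^*\,\bar{[\D, a]}\,R_\D^r(\lambda),
\end{align*}
where $\D^*$ is always kept adjacent to a resolvent on its left.

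Now observe that $R_\D^r(\lambda)\,b$ and $\D R_\D^r(\lambda)\,b$ are both compact, since they factor as $\bigl[R_\D^r(\lambda)(1+\D^*\D)^{1/2}\bigr]\cdot\bigl[(1+\D^*\D)^{-1/2} b\bigr]$ and $\bigl[\D R_\D^r(\lambda)(1+\D^*\D)^{1/2}\bigr]\cdot\bigl[(1+\D^*\D)^{-1/2} b\bigr]$, where the first brackets are bounded (by functional calculus on $\D^*\D$) and the second is compact by hypothesis. Multiplying the resolvent identity on the right by $b$ then produces a sum of compositions of bounded operators ending in one of these compact factors; each summand is compact, and combining the norm estimates gives $\|[R_\D^r(\lambda),a]\,b\| = \mO(\lambda^{-3/2})$, which is $\mO(\lambda^{-1})$ after multiplication by $(1+\lambda)^{1/2}$. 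For $[\D R_\D^r(\lambda), a]\,b$, apply the graded Leibniz rule $[\D R_\D^r(\lambda), a] = \bar{[\D, a]}\,R_\D^r(\lambda) + \D\,[R_\D^r(\lambda), a]$. The first summand times $b$ is bounded times compact of order $\mO(\lambda^{-1})$; for the second, distributing $\D$ into the resolvent identity produces further compositions whose inner $\D\cdot R_\D^r(\lambda)\D^*$ factor is controlled by $\|\D R_\D^r(\lambda)\D^*\|\leq 1$, and each term again ends in $R_\D^r(\lambda)\,b$ or $\D R_\D^r(\lambda)\,b$ and is $\mO(\lambda^{-1})$.

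For (2), I apply \cref{lem:integral_formula} and interchange the commutator with the integral (justified by norm convergence) to write
\begin{align*}
[F_\D^r, a]\,b = \frac{1}{\pi}\int_0^\infty \lambda^{-1/2}\,[\D R_\D^r(\lambda), a]\,b\,d\lambda .
\end{align*}
Each integrand is compact by (1) and dominated in norm by $C\lambda^{-1/2}(r^2+\lambda)^{-1}$, which is integrable on $[0,\infty)$; the integral therefore converges in norm to a compact operator. The main technical obstacle throughout is the rigorous handling of the naive resolvent expansion $[\D^*\D, a] = [\D^*, a]\D + \D^*[\D, a]$, since $\bar{[\D, a]}$ need not map $E$ into $\Dom(\D^*)$. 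This is circumvented by always keeping $\D^*$ adjacent to a resolvent on its left and absorbing it into the bounded operator $R_\D^r(\lambda)\D^*$, so that the identity becomes a bona fide equality of bounded operators.
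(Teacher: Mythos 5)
Your proof is correct and follows essentially the same route as the paper: both rewrite $[\D^*\D,a]$ via the Leibniz rule so that $\D^*$ always sits to the right of a resolvent, extract compactness from the hypothesis on $(1+\D^*\D)^{-\frac12}b$ via the factors $R_\D^r(\lambda)b$ and $\D R_\D^r(\lambda)b$, and obtain the $\mO(\lambda^{-1})$ decay needed to integrate in part (2). The only differences are cosmetic: the paper distributes square roots $R_\D^r(\lambda)^{\frac12}$ symmetrically rather than invoking $\|\D R_\D^r(\lambda)\D^*\|\le 1$, and it keeps the graded sign $(-1)^{\deg a}$ in the Leibniz expansion, neither of which affects the compactness or the norm estimates.
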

\begin{proof}
The proof of \ref{lem:comm_cpt_R} is an abstract generalisation of \cite[Lemma 2.9]{vdD20_Kasp_open}. 
We have
\begin{align*}
[R_\D^r(\lambda),a] b &= \big[ (r^2+\lambda+\D^*\D)^{-1} , a \big]  b = - (r^2+\lambda+\D^*\D)^{-1} [\D^*\D,a]  (r^2+\lambda+\D^*\D)^{-1} b .
\end{align*}
We note that, a priori, $[\D^*\D,a] $ may not be well-defined, since it is not clear if $a$ preserves $\Dom\D^*\D$. However, rewriting $[\D^*\D,a]  = \D^* [\D,a]  + (-1)^{\deg a} [\D^*,a]  \D$, we obtain the well-defined expression 
\begin{multline*}
[R_\D^r(\lambda),a]  b 
= 
- R_\D^r(\lambda)^{\frac12} \Big( R_\D^r(\lambda)^{\frac12} [\D^*,a]  \big( \D R_\D^r(\lambda)^{\frac12} \big) + (-1)^{\deg a} \big( R_\D^r(\lambda)^{\frac12} \D^* \big) [\D,a]  R_\D^r(\lambda)^{\frac12} \Big) R_\D^r(\lambda)^{\frac12} b .
\end{multline*}
Since $R_\D^r(\lambda)^{\frac12} b$ is compact, one sees that the right-hand-side of this expression is compact and of order $\mO(\lambda^{-\frac32})$. Moreover, $\D [R_\D^r(\lambda),a]  b$ is also well-defined, compact, and of order $\mO(\lambda^{-1})$. Finally, we see that 
\[
[\D R_\D^r(\lambda),a]  b = [\D,a]  R_\D^r(\lambda) b + \D [R_\D^r(\lambda),a]  b 
\]
is compact and of order $\mO(\lambda^{-1})$. Thus we have proven \ref{lem:comm_cpt_R}. 

Using \cref{lem:integral_formula}, we have for any $\psi\in E$ that 
\begin{align*}
\big[ F_\D^r , a \big]  b \psi 
&= - \frac1\pi \int_0^\infty \lambda^{-1/2} [\D R_\D^r(\lambda),a]  b \psi d\lambda .
\end{align*}
By \ref{lem:comm_cpt_R}, $[\D R_\D^r(\lambda),a]  b$ is compact and of order $\mO(\lambda^{-1})$. Hence the above integral converges in norm to a compact operator, which proves \ref{lem:comm_cpt_F}. 
\qedhere
\end{proof}

\begin{lem}
\label{lem:adjoint_cpt_res}
Let $a\in\End_B(E)$ be such that $a(1+\D^*\D)^{-\frac12}$ is compact, and let $b\in\Lip^*(\D)$.
Then $ab(1+\D\D^*)^{-\frac12}$ is compact. 
\end{lem}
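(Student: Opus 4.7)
The plan is to factorise
\[
ab(1+\D\D^*)^{-\frac12} = \big( a(1+\D^*\D)^{-\frac12} \big) \cdot X, \qquad X := (1+\D^*\D)^{\frac12}b(1+\D\D^*)^{-\frac12},
\]
and to show that $X$ extends to an adjointable endomorphism of $E$. Since $b\in\Lip^*(\D)$ sends $\Dom\D^*$ into $\Dom\D = \Dom(1+\D^*\D)^{\frac12}$, and $(1+\D\D^*)^{-\frac12}$ has range in $\Dom\D^*$, the composition $X$ is defined on all of $E$. Once $X\in\End_B(E)$ is established, compactness of $a(1+\D^*\D)^{-\frac12}$ immediately yields compactness of the product.

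The key step is to bound $X$ on $E$. For this I would verify that $b$ is bounded as a map from $(\Dom\D^*,\|\cdot\|_{\D^*})$ to $(\Dom\D,\|\cdot\|_\D)$. The estimate $\|b\xi\|\leq\|b\|\|\xi\|$ is immediate, so what remains is to bound $\|\D b\xi\|$ by $\|\xi\|_{\D^*}$. The idea is to view $\xi\mapsto\D b\xi$ as an everywhere-defined $B$-linear map from the Banach space $(\Dom\D^*,\|\cdot\|_{\D^*})$ to $E$, and to check that it is closed: if $\xi_n\to\xi$ in the $\D^*$-graph norm and $\D b\xi_n\to\eta$ in $E$, then $b\xi_n\to b\xi$ (since $b$ is bounded), and the closedness of $\D$ forces $b\xi\in\Dom\D$ with $\D b\xi=\eta$. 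The closed graph theorem then supplies the required bound. Using that $(1+\D^*\D)^{\frac12}\colon(\Dom\D,\|\cdot\|_\D)\to E$ and $(1+\D\D^*)^{-\frac12}\colon E\to(\Dom\D^*,\|\cdot\|_{\D^*})$ are isometric isomorphisms, boundedness of $b$ between these graph spaces is exactly norm-boundedness of $X$ on $E$.

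Adjointability of $X$ would then follow from a standard duality estimate. A formal computation on the dense subspace $\Dom\D$ shows that $\la X\psi,\phi\ra=\la\psi,Y\phi\ra$ for all $\psi\in E$ and $\phi\in\Dom\D$, where $Y\phi:=(1+\D\D^*)^{-\frac12}b^*(1+\D^*\D)^{\frac12}\phi$. Setting $\psi=Y\phi$ yields $\|Y\phi\|^2\leq\|X\|\cdot\|Y\phi\|\cdot\|\phi\|$, so $Y$ is bounded on $\Dom\D$ and extends by density to the adjoint of $X$ on $E$, whence $X\in\End_B(E)$.

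The main obstacle is the boundedness step: because $\D$ is only symmetric rather than self-adjoint, $\Dom\D$ is in general not dense in $\Dom\D^*$ with respect to the $\D^*$-graph norm, so one cannot reduce to the bounded commutator $[\D,b]$ on $\Dom\D$ via a density argument. The closed graph theorem applied to the Banach space $(\Dom\D^*,\|\cdot\|_{\D^*})$ is the essential tool for bypassing this obstruction.
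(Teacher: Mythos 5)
Your proof is correct, and structurally it is the same as the paper's: both factor $ab(1+\D\D^*)^{-\frac12}$ as the compact operator $a(1+\D^*\D)^{-\frac12}$ composed with $b$ viewed as an adjointable map from the graph module of $\D^*$ to that of $\D$ (your operator $X$ is exactly this map conjugated by the canonical isometric isomorphisms $(1+\D\D^*)^{-\frac12}$ and $(1+\D^*\D)^{\frac12}$). The difference lies in how the intermediate facts are justified. The paper asserts boundedness of $b\colon\Dom\D^*\to\Dom\D$ with the explicit bound $\|b\|+\|[\D,b]\|$ and cites Hilsum (Lemma 2.2 and Remark 2.4 of \cite{Hil10}) for adjointability; you instead derive boundedness from the closed graph theorem and adjointability from the duality estimate $\|Y\phi\|^2\leq\|X\|\,\|Y\phi\|\,\|\phi\|$. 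Both of your substitute arguments are sound: the graph spaces are complete since $\D$ and $\D^*$ are closed, the closedness of the graph of $\xi\mapsto\D b\xi$ follows exactly as you say from the closedness of $\D$, and the duality trick correctly produces a bounded, everywhere-extendable adjoint $X^*=\bar Y$. Your closing remark is also well taken: since $\Dom\D$ need not be graph-dense in $\Dom\D^*$ for a merely symmetric $\D$, one cannot transport the bound on $[\D,b]$ from $\Dom\D$ to $\Dom\D^*$ by density, which is precisely why the paper has to invoke Hilsum's identity relating $[\D^*,b]$ to $[\D,b]$; your closed-graph route buys a self-contained (if non-quantitative) proof at the cost of the explicit norm estimate.
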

\begin{proof}
Consider the domain inclusions $\iota\colon\Dom\D\into E$ and $\bar\iota\colon\Dom\D^*\into E$. By assumption, $a\circ\iota$ is compact. Furthermore, $b$ maps $\Dom\D^*$ into $\Dom\D$, and $b\colon\Dom\D^*\to\Dom\D$ is bounded with respect to the graph norms (indeed, its norm is bounded by $\|b\|+\|[\D,b] \|$). Moreover, by \cite[Lemma 2.2 \& Remark 2.4]{Hil10}, $b\colon\Dom\D^*\to\Dom\D$ is adjointable. 
Hence $ab\circ\bar\iota = (a\circ\iota) \circ b\colon \Dom\D^*\to E$ is the composition of an adjointable and a compact operator, and therefore it is compact. 
Since $(1+\D\D^*)^{-\frac12}$ is a bounded map from $E$ to $\Dom\D^*$, the statement follows. 
\end{proof}

\subsection{The connection condition}
\label{sec:connection}

\begin{defn}
\label{defn:connection_condition}
Consider three half-closed modules $(\A,{}_{\pi_1}(E_1)_B,\D_1)$, $(\B,{}_{\pi_2}(E_2)_C,\D_2)$, and $(\A,{}_\pi E_C,\D)$, where $E := E_1\hot_B E_2$ and $\pi = \pi_1\hot1$, and suppose that $\pi_1$ is essential. 
The \emph{connection condition} requires that for all $\psi$ in a dense subspace $\E_1$ of $\Dom\D_1$, we have 
\[
\til T_\psi := \mattwo{0}{T_\psi}{T_\psi^*}{0} \in \Lip(\D\oplus\D_2) . 
\]
\end{defn}

We can adapt the argument from \cite[Proposition 14]{Kuc97} to symmetric operators (see also \cite[Proposition 6.11]{KS19}), to obtain the following result. 
For simplicity, we restrict our attention to the case where $\pi_1$ is essential (since this is our case of interest in later sections). However, we note that the case of non-essential representations $\pi_1$ can be dealt with in the same way as in \cite[Proposition 14]{Kuc97}. 

\begin{prop}
\label{prop:connection}
The connection condition of \cref{defn:connection_condition} implies the connection condition of \cref{thm:Connes-Skandalis} for $F=F_\D$ and $F_2=F_{\D_2}$. 
\end{prop}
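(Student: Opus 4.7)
The strategy combines a density argument with the integral formula for the bounded transform. Since the map $\psi \mapsto [F_\D\oplus F_{\D_2}, \til T_\psi]$ is norm-continuous on $E_1$ with operator norm bounded by $2\|\psi\|$, and $\End^0_C(E\oplus E_2)$ is norm-closed in $\End_C(E\oplus E_2)$, it suffices to prove compactness on a dense subset of $\pi_1(A)\cdot E_1$. Since $\pi_1$ is essential and $\E_1$ is dense in $\Dom\D_1$ (hence in $E_1$), the vectors of the form $\psi = \pi_1(a)\psi_0$ with $a\in\A$ and $\psi_0\in\E_1$ form such a dense subset, so I treat these directly.

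For such $\psi$ a direct calculation gives $T_\psi = \pi(a)T_{\psi_0}$ and $T_\psi^* = T_{\psi_0}^*\pi(a^*)$, and hence
\[
\til T_\psi \;=\; \pi^\oplus(a)\,\til T_{\psi_0} + \til T_{\psi_0}\,\pi^\oplus(a^*), \qquad \pi^\oplus(a) := \pi(a)\oplus 0 \,\in\, \End_C(E\oplus E_2).
\]
Writing $\hat\D := \D\oplus\D_2$ and $F_{\hat\D} := F_\D\oplus F_{\D_2}$ and applying the graded Leibniz rule, the commutator $[F_{\hat\D}, \til T_\psi]$ splits into four summands. Two of these come attached to $[F_{\hat\D},\pi^\oplus(a)] = [F_\D,\pi(a)]\oplus 0$ and are compact, because by Hilsum's \cref{thm:Hilsum} applied to the half-closed module $(\A, E_C, \D)$, the triple $(A, E_C, F_\D)$ is a Kasparov module, so $[F_\D,\pi(a)]\in\End^0_C(E)$.

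The remaining two summands $\pi^\oplus(a)\,[F_{\hat\D},\til T_{\psi_0}]$ and $[F_{\hat\D},\til T_{\psi_0}]\,\pi^\oplus(a^*)$ are the main content. For these I would invoke the integral formula of \cref{lem:integral_formula} and reproduce the manipulation from the proof of \cref{lem:comm_cpt} for the regular symmetric operator $\hat\D$: the connection condition $\til T_{\psi_0}\in\Lip(\hat\D)$, together with the automatic $\Lip(\hat\D^*)$-membership of self-adjoint $\Lip(\hat\D)$-elements noted before \cref{defn:Hilsum}, provides bounded commutators $[\hat\D,\til T_{\psi_0}]$ and $[\hat\D^*,\til T_{\psi_0}]$, enabling the splitting $[\hat\D^*\hat\D,\til T_{\psi_0}] = \hat\D^*[\hat\D,\til T_{\psi_0}] + (-1)^{\deg\til T_{\psi_0}}[\hat\D^*,\til T_{\psi_0}]\hat\D$. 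Once $\pi^\oplus(a)$ is placed adjacent to a resolvent half-power, the factor $\pi^\oplus(a)R_{\hat\D}^1(\lambda)^{1/2} = \pi(a)(1+\lambda+\D^*\D)^{-1/2}\oplus 0$ is compact by the local compactness of $(1+\D^*\D)^{-1}$, whereas the remaining factors have combined operator norm $\mO((1+\lambda)^{-1/2})$, so each integrand is compact of order $\mO(\lambda^{-1})$ and the integral converges in norm to a compact operator.

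The principal technical obstacle is this last step, since $\pi^\oplus(a)$ does not commute with the bounded commutators nor with the unbounded $\hat\D$. As in the proof of \cref{lem:comm_cpt}, the argument succeeds precisely because the resolvent half-powers of $R_{\hat\D}^1(\lambda)$ can be distributed symmetrically to extract a single compact factor $\pi^\oplus(a)R_{\hat\D}^1(\lambda)^{1/2}$ on the left, while keeping the remaining product of $\hat\D R_{\hat\D}^1(\lambda)^{1/2}$-type terms together with the bounded commutators uniformly bounded in $\lambda$; the norm-continuity argument then extends the compactness statement from $\{\pi_1(a)\psi_0\}$ to all of $\pi_1(A)\cdot E_1$.
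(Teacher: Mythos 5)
Your reduction to vectors $\pi_1(a)\psi_0$ with $a\in\A$, $\psi_0\in\E_1$ is fine, as is the disposal of the two summands carrying $[F_{\D\oplus\D_2},\pi(a)\oplus 0]$ via \cref{thm:Hilsum}. The summand $[F_{\D\oplus\D_2},\til T_{\psi_0}]\,(\pi(a^*)\oplus 0)$ is also fine: it is literally \cref{lem:comm_cpt}.\ref{lem:comm_cpt_F} applied to the self-adjoint element $\til T_{\psi_0}\in\Lip(\D\oplus\D_2)$, with the compact factor on the \emph{right} (note that $(1+\hat\D^*\hat\D)^{-\frac12}(\pi(a^*)\oplus0)$ is compact, being the adjoint of $(\pi(a)(1+\D^*\D)^{-\frac12})\oplus 0$). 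The gap is in the remaining summand $(\pi(a)\oplus 0)\,[F_{\D\oplus\D_2},\til T_{\psi_0}]$, where the compactness-producing factor sits on the \emph{left}. The mechanism of \cref{lem:comm_cpt} is not left--right symmetric: expanding $[\hat\D R_{\hat\D}^1(\lambda),\til T_{\psi_0}]=[\hat\D,\til T_{\psi_0}]R_{\hat\D}^1(\lambda)+\hat\D[R_{\hat\D}^1(\lambda),\til T_{\psi_0}]$ with $\hat\D:=\D\oplus\D_2$, the operator standing immediately to the right of $\pi(a)\oplus 0$ is either the bounded commutator $[\hat\D,\til T_{\psi_0}]$ (with the resolvent all the way on the right; in the relevant matrix entry that resolvent is $(1+\lambda+\D_2^2)^{-1}$, which is locally compact only for the $B$-action on $E_2$, and no $B$-element is available there) or $\hat\D R_{\hat\D}^1(\lambda)^{\frac12}$ --- not a bare resolvent half-power. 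Commuting $\pi(a)$ past $\hat\D$ or past $[\hat\D,\til T_{\psi_0}]$ to reach a resolvent produces remainders such as $[\D,\pi(a)]R_{\D}^1(\lambda)^{\frac12}$, which are $\mO(\lambda^{-\frac12})$ in norm but not compact, so the integral no longer converges to a compact operator. Moreover, since $\hat\D$ is only symmetric, rewriting $F_{\hat\D}$ as $(1+\hat\D\hat\D^*)^{-\frac12}\hat\D$ so as to place the resolvent on the left trades $\hat\D^*\hat\D$ for $\hat\D\hat\D^*$, whose local compactness is not among the hypotheses (cf.\ \cref{lem:adjoint_cpt_res}).

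The term can be rescued, but not by the route you describe: pass to the adjoint, which up to sign is $[F_{\hat\D}^*,\til T_{\psi_0}]\,(\pi(a^*)\oplus 0)$, split $F_{\hat\D}^*=F_{\hat\D}+(F_{\hat\D}^*-F_{\hat\D})$, handle the first piece by \cref{lem:comm_cpt} as above, and handle the second using the compactness of $\pi(a)(F_\D-F_\D^*)$ from \cref{thm:Hilsum} (here $F_{\hat\D}^*-F_{\hat\D}=(F_\D^*-F_\D)\oplus 0$ because $\D_2$ is self-adjoint, and one of the two resulting pieces even vanishes since $\til T_{\psi_0}(\pi(a^*)\oplus 0)$ has range in $0\oplus E_2$). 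The paper avoids the issue altogether by a different algebraic decomposition: it considers $a\psi b$ with $b\in\B$ as well and writes $\til T_{a\psi b}=\til a\,\til T_{\psi}\,\til a^*$ with $\til a=\mathrm{diag}(a,b^*)$ --- a conjugation rather than a sum of one-sided products --- so that the graded Leibniz rule yields a single middle term $\til a\,[F_{\D\oplus\D_2},\til T_{\psi}]\,\til a^*$ whose compactifying factor $\til a^*$ sits on the right, exactly where \cref{lem:comm_cpt} wants it, the left factor $\til a$ being merely bounded. You should either adopt that decomposition or supply the adjoint argument above.
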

\begin{proof}
It suffices to consider elements $a \psi b \in E_1$, for even elements $a\in\A$ and $b\in\B$, and homogeneous $\psi\in\E_1\subset\Dom\D_1$, where $\E_1$ is the dense subset from the connection condition. 
Since $T_{a\psi b} = a T_{\psi} b$ and $T_{a\psi b}^* = b^* T_{\psi}^* a^*$, we have 
\[
\til T_{a\psi b} = \mattwo{0}{a T_{\psi} b}{b^* T_{\psi}^* a^*}{0} 
= \mattwo{a}{0}{0}{b^*} \mattwo{0}{T_{\psi}}{T_{\psi}^*}{0} \mattwo{a^*}{0}{0}{b} 
=: \til a \til T_{\psi} \til a^* .
\]
Hence the graded commutator with $F_{\D\oplus\D_2}$ is given by 
\begin{align*}
[ F_{\D\oplus\D_2} , \til T_{a\psi b} ] 
&= [ F_{\D\oplus\D_2} , \til a \til T_{\psi} \til a^* ]  \\ 
&= [ F_{\D\oplus\D_2} , \til a ]  \til T_{\psi} \til a^* + \til a [ F_{\D\oplus\D_2} , \til T_{\psi} ]  \til a^* + (-1)^{\deg\psi} \til a \til T_{\psi} [ F_{\D\oplus\D_2} , \til a^* ]  .
\end{align*}
By \cref{defn:connection_condition}, we know that $\til T_{\psi}\in\Lip(\D\oplus\D_2)$, so it follows from \cref{lem:comm_cpt}.\ref{lem:comm_cpt_F} that the second term is compact. Since the first and third terms are also compact (by \cref{thm:Hilsum}), this proves the statement. 
\end{proof}

\section{Kucerovsky's theorem revisited}
\label{sec:Kasp_prod_revisited}

In this section, we consider only the special case of unbounded Kasparov modules (i.e., the case of \emph{self-adjoint} operators). 
Thus we consider the following setting. 

\begin{assumption}
\label{ass:Kasp_prod_revisited}
Let $A$ be a ($\Z_2$-graded) separable $C^*$-algebra, let $B$ and $C$ be ($\Z_2$-graded) $\sigma$-unital $C^*$-algebras, and let $\A\subset A$ and $\B\subset B$ be dense $*$-subalgebras. 
Consider three unbounded Kasparov modules $(\A,{}_{\pi_1}(E_1)_B,\D_1)$, $(\B,{}_{\pi_2}(E_2)_C,\D_2)$, and $(\A,{}_\pi E_C,\D)$, where $E := E_1\hot_B E_2$ and $\pi = \pi_1\hot1$. 
\end{assumption}

Our aim in this section is to improve Kucerovsky's \cref{thm:Kucerovsky} by weakening the positivity condition, as follows. 

\begin{defn}
\label{defn:positivity_condition}
In the setting of \cref{ass:Kasp_prod_revisited}, 
the \emph{positivity condition} requires that the following assumptions hold: 
\begin{enumerate}
\item we have the domain inclusion $\Dom(\D)\subset\Dom(\D_1\hot1)$; 
\item there exists $c\in[0,\infty)$ such that for all $\psi\in\Dom(\D)$ we have 
\[
\big\la (\D_1\hot1)\psi \bigmvert \D\psi \big\ra + \big\la \D\psi \bigmvert (\D_1\hot1)\psi \big\ra \geq - c \big\la \psi \bigmvert (1+\D^2)^{\frac12}\psi \big\ra . 
\] 
\end{enumerate}
\end{defn}

We then obtain the following improvement of \cref{thm:Kucerovsky}. 

\begin{thm}
\label{thm:Kucerovsky_revisited}
In the setting of \cref{ass:Kasp_prod_revisited}, assume furthermore that the connection condition (\cref{defn:connection_condition}) and the positivity condition (\cref{defn:positivity_condition}) are satisfied. 
Then $(\A,{}_\pi E_C,\D)$ represents the Kasparov product of $(\A,{}_{\pi_1}(E_1)_B,\D_1)$ and $(\B,{}_{\pi_2}(E_2)_C,\D_2)$. 
\end{thm}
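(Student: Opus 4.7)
The plan is to verify the hypotheses of the Connes--Skandalis Theorem (\cref{thm:Connes-Skandalis}) for the bounded transforms $F_\D$, $F_1 := F_{\D_1}\hot 1$, and $F_2 := F_{\D_2}$, which by \cref{thm:Hilsum} (in the self-adjoint case, due to Baaj--Julg) yield Kasparov modules. The connection condition of \cref{thm:Connes-Skandalis} is immediate from the assumed connection condition (\cref{defn:connection_condition}) via \cref{prop:connection}. It remains to verify the Connes--Skandalis positivity condition; the aim is to show that $[F_1, F_\D]$ is positive modulo compact operators, which corresponds to $\kappa = 0$.

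I would follow the overall structure of Kucerovsky's original argument \cite[Theorem 13]{Kuc97}, with one essential modification. Write $S := \D_1 \hot 1$ and $P := (1+\D^2)^{1/2}$, and let $T$ denote the symmetric quadratic form $\psi \mapsto \la S\psi | \D\psi \ra + \la \D\psi | S\psi \ra$ on $\Dom\D$. Using the integral formula (\cref{lem:integral_formula}), express the anticommutator $\{F_\D, F_S\}$ as a double integral in the resolvents $R_\D(\lambda)$ and $R_S(\mu)$. After sandwiching by $\pi(a)$ and $\pi(a^*)$ for $a \in \A$, commutator corrections arising from moving $S$ past $R_\D(\lambda)$ (and vice versa) are compact by iterated use of \cref{lem:comm_cpt} and \cref{lem:adjoint_cpt_res}, together with $\A \subset \Lip^*(\D_1)$, the domain inclusion $\Dom\D \subset \Dom S$, and the local compactness of $R_\D(\lambda)$. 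This reduces the problem, modulo compacts, to controlling an integrand of the schematic form $R_\D(\lambda)^{1/2}\, T\, R_\D(\lambda)^{1/2}$, weighted by bounded factors in $\mu$.

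The pivotal new step is the use of \cref{lem:interpolation}. The domain inclusion together with the closed graph theorem makes $SP^{-1}$ bounded, hence $TP^{-1}$ extends to a bounded operator. The positivity hypothesis (\cref{defn:positivity_condition}) reads $T + cP \geq 0$ as a quadratic form on $\Dom\D = \Dom P$. Applying \cref{lem:interpolation} with weight $P$ to $T + cP$ yields that $P^{-1/2} T P^{-1/2}$ extends to an adjointable self-adjoint operator on $E$ satisfying $P^{-1/2} T P^{-1/2} \geq -c$. Inserting the factorisation $T = P^{1/2}\,(P^{-1/2} T P^{-1/2})\,P^{1/2}$ into the integrand bounds it below by $-c\, R_\D(\lambda)^{1/2}\, P\, R_\D(\lambda)^{1/2}$. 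Integrating this correction against $(\lambda\mu)^{-1/2}$ using \cref{lem:integral_formula} produces a bounded multiple of $c\, P^{-1}$, which after localisation by $\pi(a)$ and $\pi(a^*)$ is compact by \cref{lem:adjoint_cpt_res}. The remaining positive contribution then supplies the desired Connes--Skandalis positivity modulo compacts.

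The main obstacle is the bookkeeping of the error terms arising from commuting the unbounded operator $S$ through the resolvents of $\D$. Since \cref{lem:comm_cpt} is stated only for bounded $a$, one must first use the boundedness of $SP^{-1}$ to rewrite the relevant commutators in terms of $[\D^*\D, \cdot\,]$-type expressions and adapt the integral estimates accordingly, much as in \cite[Theorem 13]{Kuc97}. The genuinely new ingredient is \cref{lem:interpolation}: it is precisely this lemma that converts a \emph{relative} lower bound into the absolute boundedness of a conjugated operator, thereby playing the role that an absolute lower bound played in Kucerovsky's original argument.
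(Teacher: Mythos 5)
Your overall skeleton matches the paper's: reduce to \cref{thm:Connes-Skandalis} via \cref{prop:connection}, expand the anticommutator $F_\D F_\mS + F_\mS F_\D$ (with $\mS=\D_1\hot1$) as a double resolvent integral, rewrite the integrand Kucerovsky-style as $\sum_m Q(M_m(\lambda,\mu)\psi)$, and then feed in the relative lower bound. However, there is a genuine gap in how you dispose of the error term, and it is exactly at the point where the theorem's novelty lies. You claim that integrating the correction $-c\,R_\D(\lambda)^{1/2}PR_\D(\lambda)^{1/2}$ against $\lambda^{-1/2}$ ``produces a bounded multiple of $cP^{-1}$'', which would be locally compact and hence negligible, giving $\kappa=0$. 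This is a miscalculation: by the integral formula \eqref{eq:integral_formula},
\[
\frac1\pi\int_0^\infty \lambda^{-1/2}\,P\,R_\D(\lambda)\,d\lambda \;=\; P\,(1+\D^2)^{-1/2} \;=\; 1 ,
\]
so the factor $P=(1+\D^2)^{1/2}$ coming from the \emph{relative} bound exactly cancels the decay $(1+\D^2)^{-1/2}$ that the $\lambda$-integral would otherwise produce. After the $\mu$-integration the total error is bounded below by $-c\,(1+\mS^2)^{-1/2}\geq -c$ (this is \cref{lem:integral_estimate_first-order}); it is a genuine bounded operator of norm of order $c$, \emph{not} a locally compact one. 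Localising by $\pi(a)$ does not help either: the error involves $(1+\D_1^2)^{-1/2}\hot1$, and $\pi_1(a)(1+\D_1^2)^{-1/2}\hot1$ is in general not compact on $E_1\hot_BE_2$.

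Consequently your argument only yields $\pi(a)[F_{\D_1}\hot1,F_\D]\pi(a^*)\gtrsim -c\,\pi(aa^*)$, which violates the Connes--Skandalis hypothesis whenever $c\geq2$. The missing idea is the rescaling trick: replace $\D_1$ by $\alpha\D_1$ for small $\alpha>0$ (this does not change the class $[\D_1]$ nor the connection condition), which replaces $c$ by $\alpha c$ in the positivity estimate; choosing $\alpha c<\kappa<2$ then gives the \emph{global} operator inequality $[F_\D,F_{\alpha\mS}]\geq-\kappa$, and no compactness argument for the error is needed at all (this is \cref{prop:revisited_positivity}). Your use of \cref{lem:interpolation} to convert the relative bound into boundedness of $P^{-1/2}TP^{-1/2}$ is sound and is essentially how the paper handles the analogous estimates elsewhere (\cref{lem:positivity_sufficient,lem:integral_estimate_first-order2}), but it cannot substitute for the rescaling step.
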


Before proving the theorem, we first describe a few consequences. 
First of all, in the \emph{bounded} picture, we know that two Kasparov modules $(A,{}_\pi E_B,F)$ and $(A,{}_\pi E_B,F')$ are equivalent if for each $a\in A$, the operator $a^*[F,F']a$ is positive modulo compact operators (see \cite[Lemma 11]{Ska84} or \cite[Proposition 17.2.7]{Blackadar98}). 
The following statement gives an unbounded analogue (albeit a `global' analogue, since we no longer `localise' by elements $a\in A$), which says, roughly speaking, that two unbounded Kasparov modules $(\A,{}_\pi E_B,\D)$ and $(\A,{}_\pi E_B,\D')$ are equivalent if $[\D,\D']$ is positive modulo `first-order operators' (this generalises \cite[Corollary 17]{Kuc97}). 

\begin{coro}
\label{coro:equivalent}
Let $(\A,{}_\pi E_B,\D)$ and $(\A,{}_\pi E_B,\D')$ be unbounded Kasparov modules such that $\Dom\D\subset\Dom\D'$, and suppose there exists $c\in[0,\infty)$ such that for all $\psi\in\Dom(\D)$ we have 
\[
\la \D'\psi \mvert \D\psi\ra + \la \D\psi \mvert \D'\psi\ra \geq - c \big\la \psi \bigmvert (1+\D^2)^{\frac12}\psi \big\ra . 
\]
Then the two unbounded Kasparov modules $(\A,{}_\pi E_B,\D)$ and $(\A,{}_\pi E_B,\D')$ are homotopy-equivalent, i.e.\ $[\D']=[\D]\in\KK(A,B)$. 
\end{coro}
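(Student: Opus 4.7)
The plan is to deduce the corollary directly from \cref{thm:Kucerovsky_revisited}. Concretely, I would exhibit $(\A,{}_\pi E_B,\D)$ as a representative of the Kasparov product $[\D']\hotimes_B[1_B]$. For this, I fix an auxiliary unbounded Kasparov $B$-$B$-module $(\B,(E_2)_B,\D_2)$ representing the identity class $[1_B]\in\KK(B,B)$, together with the canonical identification $E\hotimes_B E_2\cong E$, and apply \cref{thm:Kucerovsky_revisited} to the triple
\[
(\A,{}_\pi E_B,\D'), \qquad (\B,(E_2)_B,\D_2), \qquad (\A,{}_\pi E_B,\D),
\]
in the roles of first factor $\D_1:=\D'$, second factor $\D_2$, and product $\D$, respectively. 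The conclusion would then read $[\D]=[\D']\hotimes_B[1_B]=[\D']\in\KK(A,B)$.

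Under this setup, the positivity condition of \cref{defn:positivity_condition} unwinds directly to the two hypotheses of the corollary: the domain inclusion $\Dom\D\subset\Dom(\D'\hotimes 1)=\Dom\D'$ is the first assumption, while the lower-bound estimate is literally the second. The connection condition (\cref{defn:connection_condition}) I would verify on $\E_1:=\A\cdot\Dom\D\subset\Dom\D\subset\Dom\D'$; for $\psi\in\E_1$, the operator $\til T_\psi$ preserves the relevant domains and has bounded graded commutator with $\D\oplus\D_2$, since $\D\psi$ is defined and $\D_2$ is chosen so that $\B\subset\Lip^*(\D_2)$. Density of $\E_1$ in $\Dom\D'$ with respect to the graph norm of $\D'$ would follow from essentialness of the representation together with the assertion that $\Dom\D$ is a core for $\D'$, the latter being extractable from the relative positivity bound combined with self-adjointness of both $\D$ and $\D'$.

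The main obstacle I anticipate is the choice of an unbounded representative of $[1_B]\in\KK(B,B)$ for which the connection condition is genuinely verifiable: the naive candidate $(B,B_B,0)$ is degenerate and represents $0$ rather than the identity, so for non-unital $B$ one must resort to a graded construction, e.g.\ on $B\oplus B^{\op}$ equipped with an appropriate odd regular symmetric operator whose bounded transform exhibits $[1_B]$. An alternative route that bypasses the identity-module issue is to construct the homotopy directly via the straight-line path $\D_t:=(1-t)\D+t\D'$ on the Hilbert $C([0,1],B)$-module $C([0,1],E)$: the positivity estimate secures self-adjointness of each $\D_t$ on $\Dom\D$ (via a Kato-Rellich type argument, with the relative bound by $(1+\D^2)^{\frac12}$ playing the role of a ``relatively small'' perturbation), and hence regularity of the total operator on the homotopy module, yielding $[\D]=[\D']$ by evaluation at $t=0,1$. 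In either approach the crucial input is precisely the relative positivity bound, which is what distinguishes this corollary from the standard bounded-commutator version and what makes \cref{thm:Kucerovsky_revisited} (rather than Kucerovsky's original \cref{thm:Kucerovsky}) the relevant tool.
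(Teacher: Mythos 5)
Your primary route is exactly the paper's: realise $[\D]$ as the product $[\D']\hot_B[1_B]$ and apply \cref{thm:Kucerovsky_revisited}, with the positivity condition of \cref{defn:positivity_condition} reducing verbatim to the two hypotheses of the corollary. The one genuine error in your write-up is the claim that $(B,B_B,0)$ is ``degenerate and represents $0$''. It is neither: degeneracy would require $\pi(b)(F^2-1)=0$, whereas here $\pi(b)(F^2-1)=-b$, which is nonzero in general but \emph{is} compact because $\End_B^0(B_B)\cong B$; and $(B,B_B,0)$ is precisely the standard representative of $1_B\in\KK(B,B)$ (for $B=\C$ it is the one-dimensional even Hilbert space with index $1$). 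This is the module the paper uses, so your ``main obstacle'' does not exist and no graded replacement on $B\oplus B^{\op}$ is needed. With this choice the connection condition is immediate: identifying $E\hot_BB\simeq E$, one has $T_\psi(b)=\psi b$ and $T_\psi^*(\phi)=\la\psi|\phi\ra$, so for $\psi\in\Dom\D$ the operators $\D T_\psi=T_{\D\psi}$ and $T_\psi^*\D\subset T_{\D\psi}^*$ are bounded, i.e.\ $\til T_\psi\in\Lip(\D\oplus 0)$ on the dense subspace $\E_1=\Dom\D\subset\Dom\D'$. You do not need $\Dom\D$ to be a core for $\D'$ (and that is not obviously extractable from the positivity bound); density of $\E_1$ in $E$ is what the argument of \cref{prop:connection} actually uses.

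Your fallback via the straight-line homotopy $\D_t=(1-t)\D+t\D'$ should be dropped or substantially reworked: the hypothesis is only a one-sided \emph{form} bound on the anticommutator $\la\D'\psi|\D\psi\ra+\la\D\psi|\D'\psi\ra$, which is not a Kato--Rellich relative operator bound of $\D'$ with respect to $\D$ with constant $<1$, so self-adjointness and regularity of $\D_t$ (let alone local compactness of its resolvent and norm-continuity of the bounded transforms) do not follow as stated.
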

\begin{proof}
We claim that $[\D]$ equals the internal Kasparov product (over $B$) of $[\D']$ with $1_B\in\KK(B,B)$. The class $1_B$ is represented by the unbounded Kasparov module $(B,B_B,0)$. We can identify $E\hot_BB\simeq E$, and then for each $\psi\in E$, the map $T_\psi\colon B\to E$ is given by $b\mapsto \psi b$. Its adjoint $T_\psi^*\colon E\to B$ is given by $\phi\mapsto\la\psi|\phi\ra$. For each $\psi\in\Dom\D$, we then see that the operators $\D T_\psi = T_{\D\psi}$ and $T_\psi^*\D = T_{\D\psi}^*$ are both bounded. Hence the connection condition is satisfied. 
Since the positivity condition holds by hypothesis, the claim follows from \cref{thm:Kucerovsky_revisited}. 
\end{proof}

The following lemma provides a sufficient condition for the positivity condition, given in terms of operators instead of form estimates. This sufficient condition is in particular useful for the construction of the unbounded Kasparov product from weakly anti-commuting operators (see \cref{coro:LM}). 

\begin{lem}
\label{lem:positivity_sufficient}
Let $\D$ and $\mS$ be odd regular self-adjoint operators on a $\Z_2$-graded Hilbert $C$-module $E$, such that $\Dom\D \subset \Dom \mS$. 
Suppose there exists a core $\mF\subset\Dom\D$ such that $\mS\cdot\mF\subset\Dom\D$ and $\D\cdot\mF\subset\Dom \mS$, so that $[\D,\mS]$ is well-defined on $\mF$. 
Assume that on $\mF$ we have the equality $[\D,\mS] = P + R$, where $P$ is a densely defined positive symmetric operator on $\mF$ (i.e.\ $\mF\subset\Dom P$ and $\la\psi|P\psi\ra\geq0$ for all $\psi\in\mF$), and $R$ is a densely defined symmetric operator which is relatively bounded by $\D$ (i.e.\ $\Dom\D\subset\Dom R$). 
Then for all $\psi\in\Dom\D$ we have 
\begin{align*}
\la \mS\psi \mvert \D\psi\ra + \la \D\psi \mvert \mS\psi\ra \geq - \| R (1+\D^2)^{-\frac12} \| \, \big\la \psi \bigmvert (1+\D^2)^{\frac12}\psi \big\ra . 
\end{align*}
\end{lem}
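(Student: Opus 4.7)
The plan is to reduce the claimed form estimate to the quadratic form identity $\la\mS\psi|\D\psi\ra+\la\D\psi|\mS\psi\ra = \la\psi|[\D,\mS]\psi\ra$ on the core $\mF$, and then control the bad part $R$ of $[\D,\mS]$ via the interpolation result \cref{lem:interpolation}.

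First, I would fix $\psi\in\mF$. Because $\D$ and $\mS$ are odd and self-adjoint, and because $\mS\cdot\mF\subset\Dom\D$ and $\D\cdot\mF\subset\Dom\mS$ by hypothesis, we may move $\D$ and $\mS$ freely across the inner product on $\mF$ and get
\begin{align*}
\la\mS\psi\mvert\D\psi\ra + \la\D\psi\mvert\mS\psi\ra
&= \la\psi\mvert (\D\mS+\mS\D)\psi\ra
= \la\psi\mvert [\D,\mS]\psi\ra
= \la\psi\mvert P\psi\ra + \la\psi\mvert R\psi\ra .
\end{align*}
Since $P$ is positive symmetric on $\mF$, the first term is $\geq 0$, and the problem reduces to a lower bound for $\la\psi|R\psi\ra$.

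For this lower bound I would invoke \cref{lem:interpolation} with the symmetric operator $T := R$ and the invertible regular positive self-adjoint operator $(1+\D^2)^{\frac12}$ in place of $P$: its domain is $\Dom\D\subset\Dom R$ by hypothesis, and $R(1+\D^2)^{-\frac12}$ is bounded by the relative-boundedness assumption. The lemma then produces an adjointable endomorphism $A$ on $E$ extending $(1+\D^2)^{-\frac14} R (1+\D^2)^{-\frac14}$, with $\|A\|\leq\|R(1+\D^2)^{-\frac12}\|$. Writing $\phi := (1+\D^2)^{\frac14}\psi$ (which makes sense since $\mF\subset\Dom\D\subset\Dom(1+\D^2)^{\frac14}$) we get
\[
\la\psi\mvert R\psi\ra = \la\phi\mvert A\phi\ra \geq -\|A\|\cdot\|\phi\|^2 \geq -\|R(1+\D^2)^{-\frac12}\|\,\la\psi\mvert (1+\D^2)^{\frac12}\psi\ra,
\]
which combined with the previous identity gives the claimed inequality on $\mF$.

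Finally, to extend the inequality from $\mF$ to all of $\Dom\D$, I would use that $\mF$ is a core for $\D$ (and hence also for $(1+\D^2)^{\frac12}$, since these two operators share a graph norm). Given $\psi\in\Dom\D$, pick $\psi_n\in\mF$ converging to $\psi$ in the graph norm of $\D$; then $\D\psi_n\to\D\psi$ and $(1+\D^2)^{\frac12}\psi_n\to(1+\D^2)^{\frac12}\psi$ in norm, and $\mS\psi_n\to\mS\psi$ because the inclusion $\Dom\D\hookrightarrow\Dom\mS$ is bounded for the graph norms by the closed graph theorem (both $\D$ and $\mS$ are regular, hence closed). All four terms in the desired inequality are therefore continuous in the graph norm, and the estimate passes to the limit. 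The only real delicacy is step three, i.e.\ the correct application of \cref{lem:interpolation} and the verification that its hypotheses are met; the remaining manipulations are algebraic identities on a core and a standard density argument.
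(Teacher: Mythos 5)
Your proposal is correct and follows essentially the same route as the paper: the form identity $\la\mS\psi|\D\psi\ra+\la\D\psi|\mS\psi\ra=\la\psi|P\psi\ra+\la\psi|R\psi\ra$ on the core $\mF$, the bound on $\la\psi|R\psi\ra$ via \cref{lem:interpolation} applied to $R$ and $(1+\D^2)^{\frac12}$, and the extension to $\Dom\D$ by graph-norm density (the paper likewise uses that $\Dom\D\subset\Dom\mS$ forces $\mS\psi_n\to\mS\psi$, which is exactly your closed-graph observation). No gaps.
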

\begin{proof}
From \cref{lem:interpolation} we know that $\big\| (1+\D^2)^{-\frac14} R (1+\D^2)^{-\frac14} \big\| \leq c := \| R (1+\D^2)^{-\frac12} \|$, and hence we have for $\psi\in\Dom\D$ the inequality 
\[
\pm \la\psi\mvert R\psi\ra = \pm \big\la (1+\D^2)^{\frac14} \psi \bigmvert (1+\D^2)^{-\frac14} R (1+\D^2)^{-\frac14} \, (1+\D^2)^{\frac14} \psi \big\ra \leq c \big\la \psi \bigmvert (1+\D^2)^{\frac12} \psi \big\ra . 
\]
Using also the positivity of $P$, we then find for all $\psi\in\mF$ that 
\[
\la \mS\psi \mvert \D\psi\ra + \la \D\psi \mvert \mS\psi\ra = \la\psi\mvert P\psi\ra + \la\psi\mvert R\psi\ra \geq - c \big\la \psi \bigmvert (1+\D^2)^{\frac12}\psi \big\ra . 
\]
For arbitrary $\psi\in\Dom\D$, we choose a sequence $\psi_n\in\mF$ such that $\|\psi_n-\psi\|_\D\to0$ as $n\to\infty$. 
Since $\Dom\D\subset\Dom \mS$, we note that we then also have the convergence $\|\mS\psi_n-\mS\psi\|\to0$. 
Applying the above inequality to $\psi_n$ we obtain
\begin{align*}
\la \mS\psi \mvert \D\psi\ra + \la \D\psi \mvert \mS\psi\ra 
&= \lim_{n\to\infty} \la \mS\psi_n \mvert \D\psi_n\ra + \la \D\psi_n \mvert \mS\psi_n\ra \\
&\geq - c \lim_{n\to\infty} \big\la \psi_n \bigmvert (1+\D^2)^{\frac12}\psi_n \big\ra \\
&= - c \big\la \psi \bigmvert (1+\D^2)^{\frac12}\psi \big\ra . 
\qedhere
\end{align*}
\end{proof}

Let $\mS$ and $\mT$ be odd regular self-adjoint operators on a $\Z_2$-graded Hilbert $C$-module $E$. 
We consider the linear subspace 
\[
\mF(\mS,\mT) := \Dom(\mS\mT)\cap\Dom(\mT\mS) 
= \{\psi\in\Dom \mS\cap\Dom \mT : \mS\psi\in\Dom \mT , \; \mT\psi\in\Dom \mS\} .
\]
Then $\mS$ and $\mT$ are called \emph{weakly anti-commuting} \cite[Definition 2.1]{LM19} if 
\begin{itemize}
\item there is a constant $C>0$ such that for all $\psi\in\mF(\mS,\mT)$ we have 
\[
\big\la [\mS,\mT] \psi \bigmvert [\mS,\mT] \psi \big\ra \leq C \big( \la \psi|\psi\ra + \la \mS\psi|\mS\psi\ra + \la \mT\psi|\mT\psi\ra \big) ;
\]
\item there is a core $\E\subset\Dom \mT$ such that $(\mS+\lambda)^{-1}(\E) \subset \mF(\mS,\mT)$ for $\lambda\in i\R$, $|\lambda|\geq\lambda_0>0$. 
\end{itemize}

We now obtain a different proof of the following result due to Lesch and Mesland. 

\begin{coro}[{\cite[Theorem 7.4]{LM19}}]
\label{coro:LM}
Suppose we are given two unbounded Kasparov modules $(\A,{}_{\pi_1}(E_1)_B,\D_1)$ and $(\B,{}_{\pi_2}(E_2)_C,\D_2)$. 
We write $E := E_1\hot_B E_2$, $\pi = \pi_1\hot1$, and $\mS := \D_1\hot1$. 
Let $\mT$ be an odd regular self-adjoint operator on $E$, and consider the operator $\D := \mS+\mT$ on the domain $\Dom\D := \Dom \mS\cap\Dom \mT$. 
We assume that the following conditions hold:
\begin{enumerate}
\item \label{item:connection_T}
for all $\psi$ in a dense subset of $\Dom\D_1$, we have
\[
\til T_\psi := \mattwo{0}{T_\psi}{T_\psi^*}{0} \in \Lip(\mT\oplus\D_2) ; 
\]
\item \label{item:Lip_T}
we have $\A\subset\Lip(\mT)$; 
\item \label{item:anti-comm_S-T}
$\mS$ and $\mT$ are weakly anti-commuting. 
\end{enumerate}
Then $(\A,{}_{\pi}(E)_C,\D)$ is an unbounded Kasparov module that represents the Kasparov product of $(\A,{}_{\pi_1}(E_1)_B,\D_1)$ and $(\B,{}_{\pi_2}(E_2)_C,\D_2)$. 
\end{coro}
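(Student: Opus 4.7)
The plan is to verify the hypotheses of \cref{thm:Kucerovsky_revisited} for the three modules $(\A,{}_{\pi_1}(E_1)_B,\D_1)$, $(\B,{}_{\pi_2}(E_2)_C,\D_2)$, and $(\A,{}_\pi E_C,\D)$, from which the Kasparov product statement follows directly.

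First I would establish that $(\A,{}_\pi E_C,\D)$ is itself an unbounded Kasparov module. The weak anti-commutation hypothesis \ref{item:anti-comm_S-T} combined with the general theory of \cite{LM19} yields that $\D=\mS+\mT$ on $\Dom\mS\cap\Dom\mT$ is odd regular self-adjoint, that the graph norm of $\D$ dominates those of $\mS$ and $\mT$ on $\Dom\D$, and that $(1+\D^2)^{-1}$ is locally compact. The inclusion $\A\subset\Lip(\D)$ follows from $\A\subset\Lip(\mS)$, which is inherited from $\A\subset\Lip(\D_1)$ via the identity $[\mS,\pi(a)]=[\D_1,a]\hot 1$, together with hypothesis \ref{item:Lip_T}.

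Next, for the connection condition \cref{defn:connection_condition}, a direct calculation using $\mS T_\psi=T_{\D_1\psi}$ and $T_\psi^*\mS=T_{\D_1\psi}^*$ on the appropriate domains (valid for $\psi\in\Dom\D_1$) shows $\til T_\psi\in\Lip(\mS\oplus 0)$, with bounded commutator $\til T_{\D_1\psi}$. Combining this with hypothesis \ref{item:connection_T} and the identity $\Dom(\D\oplus\D_2)=\Dom(\mS\oplus 0)\cap\Dom(\mT\oplus\D_2)$ yields $\til T_\psi\in\Lip(\D\oplus\D_2)$. For the positivity condition \cref{defn:positivity_condition}, the domain inclusion $\Dom\D\subset\Dom\mS$ is immediate, and on a suitable core $\mF\subset\Dom\D$ furnished by the weakly anti-commuting framework (refined so that $\mS^2$ is also defined), a direct expansion gives
\[
\la\mS\psi\mvert\D\psi\ra + \la\D\psi\mvert\mS\psi\ra = \la\psi\mvert 2\mS^2\psi\ra + \la\psi\mvert[\mS,\mT]\psi\ra .
\]
I would then apply \cref{lem:positivity_sufficient} with $P:=2\mS^2\geq 0$ and $R:=[\mS,\mT]$: by weak anti-commutation and the graph-norm domination from the first step, $R$ extends to a bounded map from the graph of $\D$ into $E$, so $R(1+\D^2)^{-1/2}$ is bounded. \cref{lem:positivity_sufficient} then delivers the required relative lower bound, and \cref{thm:Kucerovsky_revisited} concludes the proof.

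The principal obstacle is the first step — the regular self-adjointness of $\D=\mS+\mT$, the graph-norm comparison, and the local compactness of the resolvent — which I would handle by directly invoking the machinery of \cite{LM19}. Once this structural input is in hand, the remaining verifications are precisely what the improved \cref{thm:Kucerovsky_revisited} is designed for: the \emph{relative} (rather than absolute) lower bound in \cref{defn:positivity_condition} is exactly strong enough to absorb the first-order cross term $[\mS,\mT]$ coming from the anti-commutator, which the classical Kucerovsky theorem could not accommodate.
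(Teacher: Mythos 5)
Your proposal is correct and follows essentially the same route as the paper: establish that $\D=\mS+\mT$ is an unbounded Kasparov module via the machinery of \cite{LM19}, verify the connection condition by combining $\til T_\psi\in\Lip(\mS\oplus0)$ with hypothesis \ref{item:connection_T}, and verify the positivity condition by applying \cref{lem:positivity_sufficient} on the core $\mF(\mS,\mT)$ with $P$ the (positive) square term and $R=[\mS,\mT]$, which is relatively $\D$-bounded by weak anti-commutation. The paper's proof is exactly this argument, so no further comparison is needed.
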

\begin{proof}
By \ref{item:anti-comm_S-T} and \cite[Theorem 2.6]{LM19}, $\D$ is regular and self-adjoint. Using \ref{item:Lip_T} we clearly have $\A \subset \Lip(\mT)\cap\Lip(\mS) \subset \Lip(\D)$. As in the proof of \cite[Theorem 7.4]{LM19}, we know that $a(\D\pm i)^{-1}$ is compact for any $a\in A$. 
Thus $(\A,{}_{\pi}(E)_C,\D)$ is indeed an unbounded Kasparov module. For any $\psi\in\Dom\D_1$ we have bounded operators $\mS T_\psi = T_{\D_1\psi}$ and $\mT^*_\psi \mS = \mT^*_{\D_1\psi}$, which means that $\til T_\psi\in\Lip(\mS\oplus0)$. Combined with \ref{item:connection_T} this ensures that the connection condition (\cref{defn:connection_condition}) is satisfied. 

We note that $\mF := \mF(\mS,\mT)$ is a core for $\D$ \cite[Theorem 2.6.(4)]{LM19}, that $\mS\cdot\mF\subset\Dom \mS$ \cite[Theorem 5.1]{LM19}, and that $[\mS,\mT]$ is relatively bounded by $\D$ \cite[Theorem 2.6.(1)]{LM19}. 
Thus \cref{lem:positivity_sufficient} applies with $\mF=\mF(\mS,\mT)$, $P=\mS^2$ and $R=[\mS,\mT]$, and we see that the positivity condition (\cref{defn:positivity_condition}) is also satisfied. 
Hence it follows from \cref{thm:Kucerovsky_revisited} that $(\A,{}_{\pi}(E)_C,\D)$ represents the Kasparov product of $(\A,{}_{\pi_1}(E_1)_B,\D_1)$ and $(\B,{}_{\pi_2}(E_2)_C,\D_2)$. 
\end{proof}

\begin{remark}
\label{remark:construct_T}
Under suitable assumptions \cite{Mes14,KL13,BMS16}, one can \emph{construct} an operator $\mT$ of the form
\[
\mT = 1\hot_\nabla\D_2 ,
\]
where $\nabla$ is a suitable `connection' on $E_1$, and prove that $\mT$ satisfies the conditions of \cref{coro:LM}. 
In many geometric examples, the connection $\nabla$ is naturally determined by the given geometry \cite{BMS16,KS18,KS20,vdD20_Kasp_open}. 
In fact, such an operator $\mT$ always exists, if one is willing to allow $(\A,{}_{\pi_1}(E_1)_B,\D_1)$ and $(\B,{}_{\pi_2}(E_2)_C,\D_2)$ to be replaced by homotopy-equivalent modules \cite{MR16}. 
\end{remark}

\subsection{Proof of \texorpdfstring{\cref{thm:Kucerovsky_revisited}}{the Theorem}}

Before we proceed with the proof, let us first introduce some notation. 
\begin{notation}
\label{notation}
Let $\D$ and $\mS$ be regular self-adjoint operators on a Hilbert $C$-module $E$, such that $\Dom\D\subset\Dom \mS$. 
We consider the quadratic form $Q$ defined for $\psi \in \Dom\D$ by
\begin{align*}
Q(\psi) &:= 2 \Re \la \D \psi | \mS \psi \ra = \la \D \psi | \mS \psi \ra + \la \mS \psi | \D \psi \ra . 
\end{align*}
For $\lambda,\mu\in[0,\infty)$, we use the notation 
\begin{align*}
R_\D(\lambda) &:= (1+\lambda+\D^2)^{-1} , & 
R_\mS(\mu) &:= (1+\mu+\mS^2)^{-1} . 
\end{align*}
We introduce the following bounded operators: 
\begin{align*}
k_\D(\lambda) &:= \sqrt{1+\lambda} R_\D(\lambda), & 
k_\mS(\mu) &:= \sqrt{1+\mu} R_\mS(\mu) , \\
h_\D(\lambda) &:= \D R_\D(\lambda) , & 
h_\mS(\mu) &:= \mS R_\mS(\mu) . 
\end{align*}
Furthermore, we define 
\begin{align*}
M_1(\lambda,\mu) &:= h_\D(\lambda) h_\mS(\mu) , & 
M_2(\lambda,\mu) &:= k_\D(\lambda) h_\mS(\mu) , \\
M_3(\lambda,\mu) &:= h_\D(\lambda) k_\mS(\mu) , & 
M_4(\lambda,\mu) &:= k_\D(\lambda) k_\mS(\mu) .
\end{align*}
\end{notation}

\begin{lem}
\label{lem:integral_estimate_first-order}
For $\psi\in E$, we have the inequality 
\begin{align*}
\sum_{m=1}^4 \frac{1}{\pi^2} \int_0^\infty \int_0^\infty (\mu\lambda)^{-\frac12} \big\la M_m(\lambda,\mu)\psi \bigmvert (1+\D^2)^{\frac12} M_m(\lambda,\mu)\psi \big\ra d\lambda d\mu 
\leq \la\psi|\psi\ra . 
\end{align*}
\end{lem}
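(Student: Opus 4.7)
The plan is to expand the inner products, use the commutation between $P := (1+\D^2)^{\frac12}$ and functions of $\D$, and then collapse the four terms pairwise via the algebraic identities
\begin{align*}
h_\D(\lambda)^* P h_\D(\lambda) + k_\D(\lambda)^* P k_\D(\lambda) &= P R_\D(\lambda), \\
h_\mS(\mu)^* h_\mS(\mu) + k_\mS(\mu)^* k_\mS(\mu) &= R_\mS(\mu).
\end{align*}
These are immediate from functional calculus: e.g.\ $h_\D(\lambda)^* P h_\D(\lambda) = P\D^2 R_\D(\lambda)^2$ and $k_\D(\lambda)^*P k_\D(\lambda) = (1+\lambda) P R_\D(\lambda)^2$, whose sum equals $P(\D^2+1+\lambda) R_\D(\lambda)^2 = P R_\D(\lambda)$; and similarly for the $\mS$-side.

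Write $C := h_\mS(\mu)$, $D := k_\mS(\mu)$. Grouping the four summands according to which factor comes from the $\mS$-calculus, I would first use the $\D$-identity to get
\begin{align*}
\sum_{m=1}^4 \la M_m\psi\mvert P M_m\psi\ra &= \la C\psi \mvert P R_\D(\lambda) C\psi\ra + \la D\psi \mvert P R_\D(\lambda) D\psi\ra.
\end{align*}
Applying $\frac{1}{\pi}\int_0^\infty \lambda^{-\frac12}(\cdot)\,d\lambda$ and invoking \cref{lem:integral_formula} with $f(x) = (1+x)^{\frac12}$ (so $f(\D^2)(1+\D^2)^{-\frac12}=1$), I can pass the integral inside the inner product to obtain
\begin{align*}
\frac{1}{\pi}\int_0^\infty \lambda^{-\frac12} \la C\psi\mvert P R_\D(\lambda) C\psi\ra\, d\lambda &= \la C\psi\mvert C\psi\ra,
\end{align*}
and likewise for the $D$-term.

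Now I would use the $\mS$-identity together with \cref{lem:integral_formula} applied to $\mS$ (with $f\equiv 1$, i.e.\ the plain integral formula for $(1+\mS^2)^{-\frac12}$), giving
\begin{align*}
\frac{1}{\pi}\int_0^\infty \mu^{-\frac12}\big(\la C\psi\mvert C\psi\ra + \la D\psi\mvert D\psi\ra\big)\, d\mu
&= \frac{1}{\pi}\int_0^\infty \mu^{-\frac12}\la\psi\mvert R_\mS(\mu)\psi\ra\, d\mu \\
&= \la\psi\mvert (1+\mS^2)^{-\frac12}\psi\ra.
\end{align*}
The final inequality is then $\la\psi\mvert(1+\mS^2)^{-\frac12}\psi\ra \leq \la\psi\mvert\psi\ra$, since $(1+\mS^2)^{-\frac12}\leq 1$.

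The only subtlety I anticipate is justifying Fubini and the interchange of the integrals with the inner products, given that $P$ is unbounded. This should not be a real obstacle: the integrand $P R_\D(\lambda)$ is a bounded operator of order $\mO(\lambda^{-\frac12})$ at infinity (since $\|PR_\D(\lambda)\|\leq (1+\lambda)^{-\frac12}$), so after combining with $\lambda^{-\frac12}$ it is integrable near $\infty$ but only as $\lambda^{-\frac12}$ near $0$; the norm-convergent integral formula in \cref{lem:integral_formula} handles exactly this case, which is why that lemma was formulated with the continuity hypothesis on $f(x^2)(1+x^2)^{-\frac12}$. Once that step is in place, everything else is bare functional calculus and positivity.
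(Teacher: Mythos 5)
Your proof is correct and is essentially the paper's own argument: with $P:=(1+\D^2)^{\frac12}$, you collapse the four terms pairwise via $h_\D(\lambda)Ph_\D(\lambda)+k_\D(\lambda)Pk_\D(\lambda)=PR_\D(\lambda)$, integrate out $\lambda$ using \cref{lem:integral_formula}, and then integrate out $\mu$ to reach $\la\psi\mvert(1+\mS^2)^{-\frac12}\psi\ra\le\la\psi|\psi\ra$. The only cosmetic slip is the claim that $\lambda^{-\frac12}PR_\D(\lambda)$ is integrable near $\infty$: its norm is only $\mO(\lambda^{-1})$ there, so the $\lambda$-integral converges merely strongly rather than in norm --- but that is exactly the mode of convergence \cref{lem:integral_formula} provides, so nothing breaks.
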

\begin{proof}
Let us consider the four integrals (for $m=1,2,3,4$) given by 
\[
I_m := \frac{1}{\pi^2} \int_0^\infty \int_0^\infty (\mu\lambda)^{-\frac12} \big\la M_m(\lambda,\mu) \psi \bigmvert (1+\D^2)^{\frac12} M_m(\lambda,\mu) \psi \big\ra d\lambda d\mu .
\]
We note that $h_\D(\lambda)^2 + k_\D(\lambda)^2 = R_\D(\lambda)$. 
Moreover, by \cref{lem:integral_formula}
we have the strongly convergent integral 
\[
\frac{1}{\pi} \int_0^\infty \lambda^{-\frac12} (1+\D^2)^{\frac12} \big( h_\D(\lambda)^2 + k_\D(\lambda)^2 \big) d\lambda 
= \frac{1}{\pi} \int_0^\infty \lambda^{-\frac12} (1+\D^2)^{\frac12} R_\D(\lambda) d\lambda 
= 1 .
\]
Computing the integrals over $\lambda$, we thus obtain the equalities 
\begin{align*}
I_1 + I_2 &= \frac{1}{\pi} \int_0^\infty \mu^{-\frac12} \big\la h_\mS(\mu) \psi \bigmvert h_\mS(\mu) \psi \big\ra d\mu , \\
I_3 + I_4 &= \frac{1}{\pi} \int_0^\infty \mu^{-\frac12} \big\la k_\mS(\mu) \psi \bigmvert k_\mS(\mu) \psi \big\ra d\mu . 
\end{align*}
Summing up the latter two equalities and computing the remaining norm-convergent integral over $\mu$, we obtain
\begin{align*}
\sum_{m=1}^4 I_m &= \frac{1}{\pi}\int_0^\infty \mu^{-\frac12} \big\la \psi \bigmvert R_\mS(\mu) \psi \big\ra d\mu 
= \big\la \psi \bigmvert (1+\mS^2)^{-\frac12} \psi \big\ra 
\leq \big\la \psi \bigmvert \psi \big\ra . 
\qedhere
\end{align*}
\end{proof}

The following result relates our positivity condition to the positivity condition of Connes-Skandalis (\cref{thm:Connes-Skandalis}). 

\begin{prop}
\label{prop:revisited_positivity}
Let $\D$ and $\mS$ be odd regular self-adjoint operators on a $\Z_2$-graded Hilbert $C$-module $E$, such that $\Dom\D \subset \Dom \mS$. 
Suppose there exists a constant $c\in[0,\infty)$ such that for all $\psi\in\Dom\D$ we have 
\begin{align}
\label{eq:unbdd_pos_revisited}
\la \mS\psi \mvert \D\psi\ra + \la \D\psi \mvert \mS\psi\ra \geq - c \big\la \psi \bigmvert (1+\D^2)^{\frac12}\psi \big\ra . 
\end{align}
Then for any $0<\kappa<2$ there exists an $\alpha\in(0,\infty)$ such that $[ F_\D , F_{\alpha \mS} ] + \kappa$ is positive: 
\[
[F_\D , F_{\alpha \mS}] \geq - \kappa .
\]
\end{prop}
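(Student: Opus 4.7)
The strategy rests on the observation that the hypothesis scales: with $\mS' := \alpha \mS$ in place of $\mS$ (so that $F_{\alpha\mS} = F_{\mS'}$), the form bound becomes
\[
\la \mS' \psi \mvert \D \psi \ra + \la \D \psi \mvert \mS' \psi \ra \geq - \alpha c \big\la \psi \bigmvert (1+\D^2)^{\frac12} \psi \big\ra
\qquad (\psi \in \Dom \D),
\]
so the ``deficit'' is $O(\alpha)$ and can be made as small as desired. Since $F_\D$ and $F_{\mS'}$ are both bounded odd self-adjoint operators, $[F_\D, F_{\mS'}]$ equals the self-adjoint anti-commutator $F_\D F_{\mS'} + F_{\mS'} F_\D$, so it suffices to establish a bound of the form $\la \psi | \{F_\D, F_{\mS'}\} \psi \ra \geq - K \alpha \la \psi | \psi \ra$ on the dense subspace $\Dom \D$, with $K$ depending only on $c$; choosing $\alpha < \kappa/K$ then gives the statement.

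For the main estimate I would use \cref{lem:integral_formula} together with the self-adjointness of $h_\D(\lambda)$ and $h_{\mS'}(\mu)$ to rewrite
\[
\la \psi \mvert \{F_\D, F_{\mS'}\} \psi \ra = \frac{1}{\pi^2} \int_0^\infty \int_0^\infty (\lambda \mu)^{-\frac12} \la \psi \mvert \{h_\D(\lambda), h_{\mS'}(\mu)\} \psi \ra \, d\lambda\, d\mu,
\]
reducing the problem to a pointwise bound on the bilinear integrand. Using the decompositions $R_\D(\lambda) = h_\D(\lambda)^2 + k_\D(\lambda)^2$ and $R_{\mS'}(\mu) = h_{\mS'}(\mu)^2 + k_{\mS'}(\mu)^2$, I would rearrange $\{h_\D(\lambda), h_{\mS'}(\mu)\}$ as a finite sum of ``sandwich'' expressions of the form $M_m(\lambda,\mu)^* (\D \mS' + \mS' \D) M_m(\lambda,\mu)$, with $M_m$ as in \cref{notation}, plus commutator remainders. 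Because each $M_m$ contains at least one resolvent of $\D$, the vector $M_m(\lambda,\mu)\psi$ lies in $\Dom\D \subset \Dom\mS'$, so the scaled hypothesis applies termwise to give
\[
\la M_m(\lambda,\mu)\psi \mvert (\D \mS' + \mS' \D) M_m(\lambda,\mu) \psi \ra \geq - \alpha c \big\la M_m(\lambda,\mu)\psi \bigmvert (1+\D^2)^{\frac12} M_m(\lambda,\mu) \psi \big\ra.
\]
Summing over $m$, integrating against $(\lambda\mu)^{-1/2}/\pi^2$, and invoking \cref{lem:integral_estimate_first-order} bounds the principal contribution below by $- \alpha c \la \psi | \psi \ra$.

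The commutator remainders all carry at least one additional factor of $\alpha$, because the formal identity $[\D, R_{\mS'}(\mu)] = -\alpha^2 R_{\mS'}(\mu) [\D, \mS^2] R_{\mS'}(\mu)$ (and its analogues) exhibits the $\alpha^2$-scaling of any commutator between $\D$ and a resolvent of $\mS'$. Re-applying \cref{lem:integral_estimate_first-order} with suitably rearranged weights then controls each remainder by $O(\alpha) \la \psi | \psi \ra$, yielding $\la \psi | \{F_\D, F_{\mS'}\} \psi \ra \geq - K \alpha \la \psi | \psi \ra$ as required. The main technical obstacle is precisely this bookkeeping of remainders: since the hypothesis provides only a form bound on $\D\mS + \mS\D$ and no operator bound on $[\D,\mS]$, every rearrangement must be carried out at the form level, using the resolvent identity $R_{\mS'}(\mu) - R_\D(\lambda) = R_{\mS'}(\mu)(\D^2 - \alpha^2 \mS^2 + \lambda - \mu) R_\D(\lambda)$ rather than directly extracting $[\D,\mS]$, in order to expose the $\alpha$-scaling cleanly.
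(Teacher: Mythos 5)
Your overall architecture coincides with the paper's: the double-integral representation via \cref{lem:integral_formula}, the decomposition of the integrand into the four sandwiches indexed by $M_m(\lambda,\mu)$, the termwise application of the hypothesis (legitimate since $\Ran M_m(\lambda,\mu)\subset\Dom\D\subset\Dom\mS$), the resummation via \cref{lem:integral_estimate_first-order}, and the final rescaling $\mS\mapsto\alpha\mS$. However, the step you yourself flag as ``the main technical obstacle'' --- the bookkeeping of commutator remainders --- is a genuine gap, and your sketch for closing it would not work. The hypothesis \eqref{eq:unbdd_pos_revisited} is only a one-sided form bound on the anticommutator $\D\mS+\mS\D$; it gives no control whatsoever on $[\D,\mS]$ or $[\D,\mS^2]$. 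Hence the identity $[\D,R_{\alpha\mS}(\mu)]=-\alpha^2R_{\alpha\mS}(\mu)[\D,\mS^2]R_{\alpha\mS}(\mu)$ you invoke to ``expose the $\alpha$-scaling'' buys nothing: the middle factor is an uncontrolled, possibly unbounded operator, and the prefactor $\alpha^{2}$ is offset by the $\alpha$-dependence of the surrounding resolvents, so no $O(\alpha)\la\psi|\psi\ra$ estimate follows. If remainders of this type genuinely appeared, the argument would be stuck.

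The resolution --- which is the actual content of the paper's proof, quoted from Kucerovsky's Lemma~11 and reproved as \cref{lem:positivity_local_expr} in the special case $v=\phi=\rho=1$ --- is that the decomposition is \emph{exact}: there are no remainders. The trick is to work at the form level throughout and never commute $\mS$ past a resolvent of $\D$ (or vice versa). One writes
\begin{align*}
\la\psi\mvert \D R_\D(\lambda)\mS R_\mS(\mu)\psi\ra
&=\la (1+\mu+\mS^2)R_\mS(\mu)\psi\mvert \D R_\D(\lambda)\mS R_\mS(\mu)\psi\ra\\
&=\la k_\mS(\mu)\psi\mvert \D R_\D(\lambda)\,\mS k_\mS(\mu)\psi\ra+\la\mS h_\mS(\mu)\psi\mvert \D R_\D(\lambda)h_\mS(\mu)\psi\ra ,
\end{align*}
moving one factor of $\mS$ to the \emph{left} slot of the inner product by symmetry, and then in each resulting term inserts $1=(1+\lambda+\D^2)R_\D(\lambda)$ and moves one factor of $\D$ across the inner product in the same way. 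Taking $2\Re$ produces exactly $\sum_{m=1}^4 Q(M_m(\lambda,\mu)\psi)$ with $Q(\xi)=2\Re\la\D\xi\mvert\mS\xi\ra$, with nothing left over. Once this identity is in hand, the rest of your argument (termwise hypothesis, \cref{lem:integral_estimate_first-order}, rescaling) goes through verbatim. So the missing piece is a single algebraic computation, but it is the computation on which the whole proposition rests, and the substitute you propose for it is not viable under the stated hypotheses.
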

\begin{proof}
Recall that for $\psi\in E$ we have by \cref{lem:integral_formula} that $F_\D \psi = \frac1\pi \int_0^\infty \lambda^{-\frac12} \D R_\D(\lambda) \psi d\lambda$, and similarly for $F_\mS$. 
Applying \cref{lem:integral_formula} twice, we can then rewrite 
\begin{align*}
\big\la \psi \bigmvert [F_\D , F_\mS] \psi \big\ra 
&= 2 \Re \big\la \psi \bigmvert F_\D F_\mS \psi \big\ra \nonumber\\
&= \frac{1}{\pi^2} \int_0^\infty \int_0^\infty (\mu\lambda)^{-\frac12} 2 \Re \big\la \psi \bigmvert \D R_\D(\lambda) \mS R_\mS(\mu) \psi \big\ra d\lambda d\mu .
\end{align*}
By the same computation as in \cite[Lemma 11]{Kuc97} (or as in \cref{lem:positivity_local_expr} below, taking the special case $v=\phi=\rho=1$), 
the integrand on the right-hand-side can be rewritten as
\begin{align*}
2 \Re \big\la \psi \bigmvert \D R_\D(\lambda) \mS R_\mS(\mu) \psi \big\ra 
= \sum_{m=1}^4 Q\big(M_m(\lambda,\mu)\psi\big) . 
\end{align*}
By \cref{eq:unbdd_pos_revisited} we have $Q(\psi) \geq - c \la \psi \mvert (1+\D^2)^{\frac12}\psi\ra$, and therefore we obtain 
\begin{align*}
\big\la \psi \bigmvert [F_\D , F_\mS] \psi \big\ra 
&\geq - c \sum_{m=1}^4 \frac{1}{\pi^2} \int_0^\infty \int_0^\infty (\mu\lambda)^{-\frac12} \big\la M_m(\lambda,\mu)\psi \bigmvert (1+\D^2)^{\frac12}M_m(\lambda,\mu)\psi \big\ra d\lambda d\mu .
\end{align*}
Applying the inequality from \cref{lem:integral_estimate_first-order}, we conclude that 
\[
\big\la \psi \bigmvert [F_\D , F_\mS] \psi \big\ra \geq - c \la\psi|\psi\ra . 
\]
Finally, if we replace $\mS$ by $\alpha \mS$ for some $\alpha>0$, then we see from \cref{eq:unbdd_pos_revisited} that $c$ should be replaced by $\alpha c$. 
Thus, by choosing $\alpha$ small enough, we can ensure that $\alpha c  < \kappa < 2$. 
\end{proof}

\begin{proof}[\textbf{Proof of \cref{thm:Kucerovsky_revisited}}]
We can represent $[\D]$, $[\D_1]=[\alpha\D_1]$ (for some $\alpha\in(0,\infty)$), and $[\D_2]$ by their bounded transforms $F_\D$, $F_{\alpha\D_1}$, and $F_{\D_2}$, respectively. 
The statement of the theorem follows from \cref{thm:Connes-Skandalis}, where the connection condition is satisfied by \cite[Proposition 14]{Kuc97} (see also \cref{prop:connection} if $\pi_1$ is essential), and (choosing $\alpha$ small enough) the positivity condition is satisfied by \cref{prop:revisited_positivity}. 
\end{proof}

\section{Localisations of half-closed modules}
\label{sec:localisations}

\subsection{Localisations of unbounded operators}
\label{sec:localisations_operators}

Let $\D$ be a regular symmetric operator on a Hilbert $B$-module $E$. For any $b=b^*\in\Lip^*(\D)$, we will consider the \emph{localisation} of $\D$ given by the operator $b\D b$. We recall the following lemma. 

\begin{lem}[{\cite[Lemma 3.2]{KS19}}]
\label{lem:loc_sa}
Let $\D$ be a regular symmetric operator on a Hilbert $B$-module $E$, and let $b=b^*\in\Lip^*(\D)$. 
Then (the closure of) $b\D b$ is regular and self-adjoint, and $\Dom\D$ is a core for $b\D b$. 
\end{lem}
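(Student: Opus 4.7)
My plan is to proceed in three stages, the last being the main obstacle.

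\emph{Stage 1 (symmetry and the core statement).} Since $b\in\Lip^*(\D)$ we have $b\cdot\Dom\D\subset\Dom\D$, so $b\D b$ is well-defined on $\Dom\D$. Symmetry there follows immediately from $b=b^*$, this invariance, and the symmetry of $\D$:
\[
\la b\D b\psi|\phi\ra = \la \D b\psi|b\phi\ra = \la b\psi|\D b\phi\ra = \la\psi|b\D b\phi\ra
\qquad (\psi,\phi\in\Dom\D).
\]
The closure $\overline{b\D b}$ is thus a closed densely defined symmetric operator whose core is $\Dom\D$ by construction, so the core claim is tautological; the content of the lemma is that this closure is regular and self-adjoint.

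\emph{Stage 2 (reduction to a symmetric principal part).} Using that $[\D,b]$ and $[\D^*,b]$ share a common bounded closure $\delta\in\End_B(E)$ (noted in the paper just before \cref{defn:Hilsum}), and that $\delta$ is skew-adjoint because $b=b^*$ and $\D$ is symmetric, a short computation gives on $\Dom\D$
\[
b\D b \;=\; S + B,
\qquad
S := \tfrac12\bigl(b^2\D + \D b^2\bigr),
\qquad
B := \tfrac12(\delta b - b\delta)\in\End_B(E)_{\mathrm{sa}}.
\]
Since regularity and self-adjointness of a closed densely defined symmetric operator are preserved under bounded self-adjoint perturbations, the problem reduces to showing that $\overline S$ is regular and self-adjoint.

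\emph{Stage 3 (main obstacle: regularity and self-adjointness of $\overline S$).} My plan is to first obtain regularity via a doubling trick, then treat self-adjointness separately. Form the odd off-diagonal operator $M = \bigl(\begin{smallmatrix}0 & \D^* \\ \D & 0\end{smallmatrix}\bigr)$ on $\Dom\D\oplus\Dom\D^*\subset E\oplus E$; this is regular and self-adjoint because $\D^{**}=\D$. The bounded self-adjoint operator $\beta:=\mathrm{diag}(b,b)$ preserves $\Dom M$ and has bounded commutator $[M,\beta]$ (with off-diagonal entries $\pm\delta$), so $\beta\in\Lip(M)$. A small computation then identifies $\tfrac12(\beta^2 M+M\beta^2)$ as the off-diagonal operator with lower-left block equal to $S$ and upper-right block to the analogous operator $S'$ on $\Dom\D^*$. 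Granting the \emph{self-adjoint} case of the lemma (i.e. $\D=\D^*$, which one can treat by a direct resolvent construction: using the bounded functional calculus of $\D$ and the Neumann-series estimate $\|[b,(\D\pm i\mu)^{-1}]\|=O(\|\delta\|/\mu^2)$ for $\mu$ large), one obtains that $\overline S$ is closed and regular, with $(\overline S)^*=\overline{S'}$. What remains is the identification $\overline S=\overline{S'}$, i.e.\ that $\Dom\D$ is a core for $\overline{S'}$. This is the crux and the technically hardest step: although $\Dom\D$ is \emph{not} dense in $\Dom\D^*$ in the graph norm of $\D^*$ (which would force $\D=\D^*$), density in the weaker graph norm of $b\D b$ should follow because the norm $\|b\D b\psi\|$ multiplies $\D$-derivatives by $b^2$, effectively localizing them and rendering harmless the "incompleteness" of $\D$ captured by the gap $\Dom\D^*\setminus\Dom\D$. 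My intended argument is to construct, for each $\phi\in\Dom\D^*$, approximants $\psi_n\in\Dom\D$ with $\psi_n\to\phi$ in $E$ and $b^2(\D\psi_n-\D^*\phi)\to0$ in $E$, via a cutoff procedure adapted to the spectral projections of $b$; this is where I expect the proof to require the most care.
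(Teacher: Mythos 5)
The paper does not prove this lemma itself; it imports it verbatim from \cite{KS19}, where the proof is an explicit construction of the resolvent of $\overline{b\D b}$ built on the auxiliary operator $\D b^2$ (the resolvent identity recorded in \cref{lem:resolvent_Dv2} of the present paper is part of exactly that argument). Your Stages 1--2 are fine up to a sign (on $\Dom\D$ one has $b\D b=\tfrac12(b^2\D+\D b^2)+\tfrac12(b\delta-\delta b)$, not $\tfrac12(\delta b-b\delta)$; harmless, since either way the correction is bounded and self-adjoint), and the doubling trick in Stage 3 correctly shows that, \emph{granted} the self-adjoint case, everything reduces to the identification $\overline S=\overline{S'}$. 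But that identification is where the entire analytic content of the lemma sits, and you have not proved it --- you have only announced a plan. Nor is the self-adjoint case, which your reduction also presupposes, actually established. So the proposal contains two open steps, not one, and neither of the strategies you sketch for them is viable as stated.

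For the self-adjoint case, a Neumann series off $(\D\pm i\mu)^{-1}$ cannot produce the resolvent of $b\D b$: this operator is not a relatively bounded perturbation of a multiple of $\D$, it degenerates wherever $b$ does, and surjectivity of $b\D b\pm i\mu$ is a completeness phenomenon rather than a perturbative one. Already for $\D=-i\partial_x$ on $L^2(\R)$ and $b$ a compactly supported bump function, essential self-adjointness of $b\D b$ on $C_c^\infty(\R)$ is proved via completeness of the flow of the vector field $b^2\partial_x$ (finite propagation speed), and the estimate $\|[b,(\D\pm i\mu)^{-1}]\|=\mO(\mu^{-2})$ plays no role. For the core identification, ``spectral projections of $b$'' are not available: on a Hilbert module one only has continuous functional calculus inside $\End_B(E)$, and in any case nothing forces a spectral cutoff of $b$ to preserve $\Dom\D^*$ or map it into $\Dom\D$ --- the only operator you know does the latter is $b$ itself, which is precisely what $b\in\Lip^*(\D)$ buys you. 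Note also that since $\overline{b\D b}\pm i$ is automatically bounded below with closed range, the approximation problem you pose is \emph{equivalent} to surjectivity of $\overline{b\D b}\pm i$, i.e.\ to the lemma itself; Stage 3 is therefore a reformulation rather than a reduction. To close the gap you would essentially have to reproduce the argument of \cite{KS19}: establish bijectivity of $ir+\D b^2$ on $\Dom(\D b^2)$ for $|r|>\|[\D,b]b\|$ and combine the intertwining relation of \cref{lem:resolvent_Dv2} with the inclusion $b\cdot\Dom\D^*\subset\Dom\D$ to construct the resolvent of $\overline{b\D b}$ directly.
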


\begin{lem}
\label{lem:cpt_res_localisation}
Let $(\A,E_B,\D)$ be a half-closed module, and consider homogeneous self-adjoint elements $a,b\in\A$ such that $ab=a$. 
Then $a(b\D b\pm i)^{-1}$ is a compact endomorphism. 
\end{lem}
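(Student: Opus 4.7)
The plan is to factor $a(b\D b \pm i)^{-1}$ as the composition of a bounded operator with a compact one, using the relation $ab=a$ (and its consequences) to control how $a$ interacts with the enlarged domain of $\overline{b\D b}$.

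First, since $a=a^*$, $b=b^*$, and $ab=a$, taking adjoints yields $ba=a$, and consequently $a = ab^k = b^k a$ for every $k\geq 0$; in particular $a=ab^2=bab$. Moreover, $\deg(ab)=\deg a$ forces $\deg b=0$, so $b$ is necessarily even (while $a$ may be even or odd). By \cref{lem:loc_sa}, $\overline{b\D b}$ is regular and self-adjoint with core $\Dom\D$, so $(b\D b\pm i)^{-1}$ is adjointable of norm $\leq 1$.

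The crux of the argument is to show that $\phi\mapsto a(b\D b\pm i)^{-1}\phi$ sends $E$ into $\Dom\D$ with a bounded graph-norm estimate. Fix $\phi\in E$, set $\psi:=(b\D b\pm i)^{-1}\phi$, and pick $\psi_n\in\Dom\D$ with $\psi_n\to\psi$ and $b\D b\,\psi_n\to\overline{b\D b}\psi$ (possible since $\Dom\D$ is a core). On $\Dom\D$, expanding $\D(b\psi_n)=b\D\psi_n+[\D,b]\psi_n$ and multiplying on the left by $b$ yields the identity
\[
b\D b\,\psi_n = b^2\D\psi_n + b[\D,b]\psi_n.
\]
Using $a=ab^2$ together with the graded Leibniz rule for $a\in\A\subset\Lip(\D)$,
\[
\D(a\psi_n) = (-1)^{\deg a}\,a\D\psi_n + [\D,a]\psi_n = (-1)^{\deg a}\,a\bigl(b\D b\,\psi_n - b[\D,b]\psi_n\bigr) + [\D,a]\psi_n,
\]
whose right-hand side converges in $E$. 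By closedness of $\D$ it follows that $a\psi\in\Dom\D$, and using $\|\psi\|\leq\|\phi\|$ and $\|\overline{b\D b}\psi\|\leq\|\phi\|+\|\psi\|\leq 2\|\phi\|$ one obtains $\|a\psi\|_\D\leq K\|\phi\|$ for some constant $K$ depending only on $a,b,\D$. Under the standard identification $\Dom\D=\Dom(1+\D^*\D)^{1/2}$ with $\|(1+\D^*\D)^{1/2}\eta\|^2=\|\eta\|_\D^2$, the operator $T:=(1+\D^*\D)^{1/2}a(b\D b\pm i)^{-1}\colon E\to E$ is therefore bounded; combined with $a=ba$ this gives
\[
a(b\D b\pm i)^{-1} = b\,(1+\D^*\D)^{-1/2}\,T.
\]

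It remains to observe that $b(1+\D^*\D)^{-1/2}$ is compact. By \cref{lem:integral_formula},
\[
b(1+\D^*\D)^{-1/2} = \frac{1}{\pi}\int_0^\infty \lambda^{-1/2}\,b(1+\lambda+\D^*\D)^{-1}\,d\lambda,
\]
and each integrand factors as $b(1+\D^*\D)^{-1}\cdot(1+\D^*\D)(1+\lambda+\D^*\D)^{-1}$, with the first factor compact (by local compactness of $(1+\D^*\D)^{-1}$, since $b\in A$) and the second of norm at most $1$; the integral converges in norm. The composition $b(1+\D^*\D)^{-1/2}\circ T$ is thus compact. The principal obstacle is the crux step: because $\Dom\overline{b\D b}$ is in general strictly larger than $\Dom\D$, neither $a\psi$ nor $b\psi$ lies in $\Dom\D$ for any obvious reason. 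The identity $a=ab^2$ is precisely what rescues the argument, letting us rewrite $a\D\psi_n$ as $a$ applied to the controlled quantity $b\D b\,\psi_n - b[\D,b]\psi_n$.
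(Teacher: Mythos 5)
Your argument is correct in substance but takes a genuinely different route from the paper's. The paper doubles up to the regular self-adjoint operator $\til\D=\mattwo{0}{\D^*}{\D}{0}$, derives the identity $(\til\D\pm i)a=[\til\D,a]b+a\big((-1)^{\deg a}b\til\D b\pm i\big)$, and multiplies by $(\til\D\pm i)^{-1}$ and $(b\D b\pm i)^{-1}$ to exhibit $a(b\D b\pm i)^{-1}$ as a sum of terms carrying the compact factors $b^2(1+\D^*\D)^{-\frac12}$ and $b^2(1+\D\D^*)^{-\frac12}$; the latter requires \cref{lem:adjoint_cpt_res} and hence Hilsum's adjointability result for $b\colon\Dom\D^*\to\Dom\D$. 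You instead show directly that $a(b\D b\pm i)^{-1}$ maps $E$ into $\Dom\D$ with a graph-norm bound, using closedness of $\D$, the core property from \cref{lem:loc_sa}, and the identity $a\D\psi_n=a\big(b\D b\,\psi_n-b[\D,b]\psi_n\big)$ on $\Dom\D$. This avoids $\D^*$ and \cref{lem:adjoint_cpt_res} entirely, at the modest cost of an approximation argument; both routes ultimately exploit the same mechanism, namely that $a=ab^2$ converts $a\D$ into $a$ applied to $b\D b$ plus bounded errors.

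One step in your write-up needs justification. To conclude that $b(1+\D^*\D)^{-\frac12}\circ T$ is compact you need $T=(1+\D^*\D)^{\frac12}a(b\D b\pm i)^{-1}$ to be \emph{adjointable}, not merely bounded: on a Hilbert module the compacts form an ideal only inside $\End_B(E)$, and a bounded everywhere-defined module map need not admit an adjoint. (This is precisely the kind of point the paper addresses by citing Hilsum in \cref{lem:adjoint_cpt_res}.) The repair is short: writing $R:=a(b\D b\pm i)^{-1}$, for $\eta\in\Dom\D$ one has $\la T\phi\mvert\eta\ra=\la R\phi\mvert(1+\D^*\D)^{\frac12}\eta\ra=\la\phi\mvert R^*(1+\D^*\D)^{\frac12}\eta\ra$, and therefore $\big\|R^*(1+\D^*\D)^{\frac12}\eta\big\|=\sup_{\|\phi\|\leq1}\big\|\la T\phi\mvert\eta\ra\big\|\leq\|T\|\,\|\eta\\|$, so $R^*(1+\D^*\D)^{\frac12}$ extends from the dense subspace $\Dom\D$ to a bounded operator on $E$, which is then the adjoint of $T$. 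With this addition your proof is complete.
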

\begin{proof}
Consider the regular self-adjoint operator $\til\D := \mattwo{0}{\D^*}{\D}{0}$, and write $a = \mattwo{a}{0}{0}{a}$ and $b = \mattwo{b}{0}{0}{b}$. 
Repeatedly using $ab=a$, we see that 
\[
(\til\D\pm i) a 
= (\til\D\pm i) ab 
= [\til\D,a]  b + a\big((-1)^{\deg a} \til\D\pm i\big)b 
= [\til\D,a]  b + a\big((-1)^{\deg a} b\til\D b\pm i\big) .
\]
We multiply from the right by $(b\D b\pm i)^{-1}$ and from the left by 
\[
(\til\D\pm i)^{-1} = \mattwo{\pm i}{\D^*}{\D}{\pm i}^{-1} = \mattwo{\mp i(1+\D^*\D)^{-1}}{(1+\D^*\D)^{-1}\D^*}{(1+\D\D^*)^{-1}\D}{\mp i(1+\D\D^*)^{-1}} .
\]
Using that $[\D^*,a]  = [\D,a] $ 
(more precisely, their closures are equal) 
and that $b\D^*b = b\D b$ on $\Dom b\D b$, this yields
\begin{align*}
&\mattwo{a (b\D b\pm i)^{-1}}{0}{0}{a (b\D b\pm i)^{-1}} 
= b^2 \mattwo{a (b\D b\pm i)^{-1}}{0}{0}{a (b\D b\pm i)^{-1}} \\
&\qquad= b^2 \mattwo{\mp i(1+\D^*\D)^{-1}}{(1+\D^*\D)^{-1}\D^*}{(1+\D\D^*)^{-1}\D}{\mp i(1+\D\D^*)^{-1}} \times \\
&\qquad\qquad\times \left( [\D,a]  b \mattwo{0}{1}{1}{0} + (-1)^{\deg a} a \mattwo{\pm i}{(-1)^{\deg a} b\D b}{(-1)^{\deg a} b\D b}{\pm i} \right) (b\D b\pm i)^{-1} .
\end{align*}
We note that $b^2 (1+\D^*\D)^{-\frac12}$ is compact, and by \cref{lem:adjoint_cpt_res} also $b^2 (1+\D\D^*)^{-\frac12}$ is compact. 
Since $\mattwo{\pm i}{(-1)^{\deg a} b\D b}{(-1)^{\deg a} b\D b}{\pm i} (b\D b\pm i)^{-1}$ is bounded, it follows that $a (b\D b\pm i)^{-1}$ is compact. 
\end{proof}

\begin{lem}
\label{lem:local_comparison}
Let $(\A,E_B,\D)$ be a half-closed module. 
Consider homogeneous self-adjoint elements $a\in A$ and $c,b\in\A$ such that $b,c$ are even, $ac=a$, and $cb=c$. Let $\D_b := b\D b$. 
Then $a(F_\D-F_{\D_b})$ is compact on $E$. 
\end{lem}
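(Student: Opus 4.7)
My plan is to use the integral representation from \cref{lem:integral_formula} to write
\[
a(F_\D - F_{\D_b}) = \frac{1}{\pi}\int_0^\infty \lambda^{-\frac12}\bigl[a\D R_\D(\lambda) - a\D_b R_{\D_b}(\lambda)\bigr]\,d\lambda,
\]
with $R_\D(\lambda) = (1+\lambda+\D^*\D)^{-1}$ and $R_{\D_b}(\lambda) = (1+\lambda+\D_b^2)^{-1}$ (noting that $\D_b$ is regular self-adjoint by \cref{lem:loc_sa}), and to show that the integrand is compact with norm decaying fast enough in $\lambda$ for the integral to converge in norm.

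The algebraic engine is the identity $a(\D_b - \D) = a[\D,b]$ on the core $\Dom\D$. Indeed, $ac=a$ and $cb=c$ force $ab = acb = ac = a$, hence $ab^2 = a$. Since $b \in \Lip^*(\D)$ preserves $\Dom\D$, for $\psi \in \Dom\D$ we have $\D_b\psi = b\D b\psi = b^2\D\psi + b[\D,b]\psi$, and left-multiplication by $a$ kills the $b^2-1$ contribution and yields $a[\D,b]\psi$. Since $a[\D,b]$ is a bounded endomorphism, so is the extension of $a(\D_b-\D)$.

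Armed with this, I decompose the integrand telescopically as
\[
a\D R_\D(\lambda) - a\D_b R_{\D_b}(\lambda) = a(\D-\D_b)R_\D(\lambda) + a\D_b\bigl(R_\D(\lambda) - R_{\D_b}(\lambda)\bigr).
\]
The first piece equals $-a[\D,b]R_\D(\lambda)$. The second I rewrite via the resolvent identity $R_\D(\lambda) - R_{\D_b}(\lambda) = R_{\D_b}(\lambda)(\D_b^2 - \D^*\D)R_\D(\lambda)$, interpreted on $R_\D(\lambda)\psi \in \Dom\D^*\D$ and with the middle factor split as $\D_b^2 - \D^*\D = \D_b(\D_b-\D) + (\D_b-\D^*)\D$ so that, after further multiplication by $a$ on the left, the $(\D_b-\D)$ and $(\D_b-\D^*)$ factors again collapse (by the identity above and its adjoint version) into the bounded endomorphism $a[\D,b]$ and its adjoint. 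The result is a finite sum of terms whose leftmost factor is $a$ or $a[\D,b]$ and whose rightmost factor is $R_\D(\lambda)$, $R_{\D_b}(\lambda)$, or $\D R_\D(\lambda)$. The required norm estimates then come from two facts: $a R_\D(\lambda)$ is compact with norm $O((1+\lambda)^{-1})$, since $a(1+\D^*\D)^{-1}$ is compact by \cref{defn:Hilsum} and $(1+\D^*\D)R_\D(\lambda)$ has norm at most $1$; and $a R_{\D_b}(\lambda) = ac R_{\D_b}(\lambda)$ is compact with the same decay, by \cref{lem:cpt_res_localisation} applied to $c,b$ together with the factorisation $(1+\lambda+\D_b^2)^{-1} = (\D_b + i\sqrt{1+\lambda})^{-1}(\D_b - i\sqrt{1+\lambda})^{-1}$. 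With the $\lambda^{-\frac12}$ prefactor, the integrand is compact-valued and norm-integrable on $[0,\infty)$, so the integral converges in norm to a compact endomorphism.

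The main technical obstacle is the rigorous handling of $\D_b^2 - \D^*\D$ on the correct domains: since $\D$ is only symmetric, $\Dom\D^*\D$ and $\Dom\D_b^2$ are a priori distinct. This is navigated by always working first on $\Dom\D$, which by \cref{lem:loc_sa} is a core for $\D_b$, and by using that $b$ preserves $\Dom\D$, so every expression involving $\D_b$ reduces to a bounded combination of $\D$, $b$, and $[\D,b]$ that becomes well-defined and tractable after the left-multiplication by $a$.
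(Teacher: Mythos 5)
Your overall strategy (integral formula, resolvent identity, collapsing $\D_b-\D$ into $[\D,b]$) is the right one and matches the paper's, but there is a genuine gap at the heart of the argument: the element $c$ plays no role in your proof beyond deriving $ab=a$, whereas it is precisely what makes the collapse work. After you apply the resolvent identity, the factors $(\D_b-\D)$ and $(\D_b-\D^*)$ sit in the \emph{middle} of the products, e.g.\ in $a\,\D_bR_{\D_b}(\lambda)\,\D_b(\D_b-\D)R_\D(\lambda)$, and the identity $a(\D_b-\D)=a[\D,b]$ is useless there because $a$ is separated from $(\D_b-\D)$ by $\D_bR_{\D_b}(\lambda)\D_b$. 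The uncollapsed piece $(b^2-1)\D$ inside $(\D_b-\D)$ therefore survives: the factor $\D_b(b^2-1)\D R_\D(\lambda)$ is only $\mO(1)$ in norm (two unbounded factors against one resolvent), so the corresponding term in your integrand is merely $\mO(\lambda^{-1/2})$ and the integral $\int_0^\infty\lambda^{-1/2}\cdot\mO(\lambda^{-1/2})\,d\lambda$ diverges. The paper's proof handles exactly this by writing $a=ac$, commuting $c$ through the $\D_b$-resolvent up to errors controlled by \cref{lem:comm_cpt}, and only then using $c(\D_b-\D)=c\bar{[\D,b]}$ and $c(\D^*-\D_b)=-c\bar{[\D,b]}$ to kill the $(b^2-1)$ contributions; that cancellation is what produces the $\mO(\lambda^{-1})$ decay needed for norm convergence.

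Two further points. First, even in your first telescoped piece $-a\bar{[\D,b]}R_\D(\lambda)$, compactness is not established: the facts you cite ($aR_\D(\lambda)$ and $aR_{\D_b}(\lambda)$ compact) have the resolvent \emph{adjacent} to $a$, but here $\bar{[\D,b]}$ sits between them, and local compactness of $(1+\D^*\D)^{-1}$ says nothing about $a\bar{[\D,b]}(1+\lambda+\D^*\D)^{-1}$. The paper avoids this configuration entirely: it first replaces $F_\D$ by $F_\D^*=\bar{\D^*(1+\D\D^*)^{-\frac12}}$ (legitimate since $a(F_\D-F_\D^*)$ is compact by \cref{thm:Hilsum}) and arranges every term so that the compactifying resolvent of $\D_b$ stands immediately to the right of $a$ or $c$, while the $(1+\lambda+\D\D^*)^{-1}$ factors on the far right only supply decay in $\lambda$. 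Second, your domain discussion of $\D_b^2-\D^*\D$ on $\Ran R_\D(\lambda)$ is the least of the problems; it is the placement of $a$ and $c$ relative to the difference operators, and the resulting failure of both compactness and integrability, that needs to be repaired.
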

\begin{proof}
The proof closely follows the argument of \cite[Lemma 2.10]{vdD20_Kasp_open} (which in turn was inspired by \cite[Lemma 3.1]{Hil10}). The main difference here is that we have to take care of the fact that the operators $c(\D_b-\D)$ and $c(\D_b-\D^*)$ do not vanish (see below). 

Since $a(F_\D-F_\D^*)$ is compact by \cref{thm:Hilsum}, it suffices to show that $a(F_\D^*-F_{\D_b})$ is compact. 
We can rewrite 
\begin{align*}
a(F_\D^*-F_{\D_b}) 
&= a \left( \bar{(1+\D^*\D)^{-\frac12} \D^*} - \bar{\D_b (1+\D_b^2)^{-\frac12}} \right) \\
&= a \left( \bar{\D^* (1+\D\D^*)^{-\frac12}} - \bar{\D_b (1+\D_b^2)^{-\frac12}} \right) . 
\end{align*}
Using \cref{lem:integral_formula}, we have for any $\psi\in E$ that 
\[
a(F_\D^*-F_{\D_b}) \psi = \frac1\pi \int_0^\infty \lambda^{-\frac12} T(\lambda) \psi d\lambda ,
\]
where 
\[
T(\lambda) := a \left( \D^* (1+\lambda+\D\D^*)^{-1} - (1+\lambda+\D_b^2)^{-1} \D_b \right) .
\]
We claim that $T(\lambda)$ is a compact operator on $E$, and that $\|T(\lambda)\|$ is of order $\mO(\lambda^{-1})$ as $\lambda\to\infty$. It then follows that $\frac1\pi \int_0^\infty \lambda^{-\frac12} T(\lambda) d\lambda$ is in fact a \emph{norm}-convergent integral of compact operators, which proves the statement. 
To prove the claim, we rewrite 
\begin{align*}
T(\lambda) 
&= a \bar{(1+\lambda+\D_b^2)^{-1} (1+\lambda+\D_b^2)} \; \bar{\D^* (1+\lambda+\D\D^*)^{-1}} \\
&\quad- a \bar{(1+\lambda+\D_b^2)^{-1} \D_b} \; \bar{(1+\lambda+\D\D^*) (1+\lambda+\D\D^*)^{-1}} \\
&= a \bar{(1+\lambda+\D_b^2)^{-1} \D_b} \; \bar{(\D_b - \D) \D^* (1+\lambda+\D\D^*)^{-1}} \\
&\quad+ (1+\lambda) a (1+\lambda+\D_b^2)^{-1} \bar{(\D^* - \D_b) (1+\lambda+\D\D^*)^{-1}} .
\end{align*}
We note that the operators on the last line are still well-defined. For instance, we have $\Ran\big(\D^* (1+\lambda+\D\D^*)^{-1}\big) \subset \Dom\D \subset \Dom\D_b$, so that $\bar{(\D_b - \D) \D^* (1+\lambda+\D\D^*)^{-1}}$ is a well-defined bounded operator. We also note that $b\cdot\Dom\D^*\subset\Dom\D$, so that $\D_b$ is well-defined on $\Dom\D^*$. 

Next, since $cb = c$, we note that $c(\D_b-\D) = c(b^2-1)\D + cb[\D,b]  = c[\D,b] $ and similarly $c(\D^*-\D_b) = -c[\D^*,b] $. Noting that $\bar{[\D^*,b] } = \bar{[\D,b] }$ 
and using that $ac=a$, we find that 
\begin{align*}
T(\lambda) &= a \big[ c , \bar{(1+\lambda+\D_b^2)^{-1} \D_b} \big]  \bar{(\D_b - \D) \D^* (1+\lambda+\D\D^*)^{-1}} \nonumber\\
&\quad+ a (1+\lambda) \big[ c , (1+\lambda+\D_b^2)^{-1} \big]  \bar{(\D^* - \D_b) (1+\lambda+\D\D^*)^{-1}} \nonumber\\
&\quad+ a \bar{(1+\lambda+\D_b^2)^{-1} \D_b} c\bar{[\D,b] } \bar{\D^* (1+\lambda+\D\D^*)^{-1}} \nonumber\\
&\quad- a (1+\lambda) (1+\lambda+\D_b^2)^{-1} c\bar{[\D,b] } \bar{(1+\lambda+\D\D^*)^{-1}} .
\end{align*}
From \cref{lem:cpt_res_localisation} we know that $c(\D_b\pm i)^{-1}$ and $a(\D_b\pm i)^{-1}$ are compact. In particular, also $a(1+\lambda+\D_b^2)^{-\frac12}$ is compact. 
Furthermore, we may apply \cref{lem:comm_cpt}.\ref{lem:comm_cpt_R} to see that $a \big[ c , \bar{(1+\lambda+\D_b^2)^{-1} \D_b} \big] $ is compact and of order $\mO(\lambda^{-1})$, and that $a \big[ c , (1+\lambda+\D_b^2)^{-1} \big] $ is compact and of order $\mO(\lambda^{-\frac32})$. 
Using these facts, we see that $T(\lambda)$ is indeed compact and of order $\mO(\lambda^{-1})$. 
\end{proof}

\subsection{Construction of the localised representative}
\label{sec:local_representative}

We show here that the construction of a localised representative for vertical operators on submersions of open manifolds, as described in \cite[\S2.4]{vdD20_Kasp_open}, generalises to the abstract (noncommutative) setting of half-closed modules. 
Recall that a (positive, increasing, contractive) approximate unit $\{u_n\}_{n\in\N}$ is called \emph{almost idempotent} if $u_{n+1} u_n = u_n$ for all $n\in\N$ \cite[Definition II.4.1.1]{Blackadar06}. 

\begin{assumption}
\label{ass:half-closed_partition}
We consider a half-closed $A$-$B$-module $(\A,{}_\pi E_B,\D)$ for which the representation $\pi\colon A\to\End_B(E)$ is essential. 
We assume that the $*$-subalgebra $\A\subset A$ contains an (even) almost idempotent approximate unit $\{u_n\}_{n\in\N}$ for $A$. 
\end{assumption}

\begin{remark}
\label{remark:approx_unit}
\begin{enumerate}
\item 
Since $\pi$ is essential, it follows that $\pi(u_n)$ converges strongly to the identity on $E$ (as $n\to\infty$). 
\item 
We know from \cite[Corollary II.4.2.5]{Blackadar06} that a $\sigma$-unital $C^*$-algebra $A$ always contains an almost idempotent approximate unit $\{u_n\}$. 
Our main assumption is that we can find such $\{u_n\}$ \emph{inside the dense $*$-subalgebra $\A$}. 
\item 
In the special case where $A$ is unital, we can of course consider $u_n=1_A$ for all $n\in\N$. 
\end{enumerate}
\end{remark}

\begin{defn}
\label{defn:partition}
The `partition of unity' $\{\chi_k^2\}_{k\in\N}$ corresponding to the approximate unit $\{u_n\}_{n\in\N}$ is defined by 
\begin{align}
\label{eq:approx_unit_to_partition}
\chi_0 &:= u_0^{\frac12} , &
\chi_k &:= (u_k-u_{k-1})^{\frac12} \quad (k>0) .
\end{align}
\end{defn}
While $\chi_k^2 = u_k-u_{k-1}$ always lies in $\A$ for each $k\in\N$, we note that we do not know if also $\chi_k$ lies in $\A$ (we only know $\chi_k\in A$). 

\begin{lem}
\label{lem:almost_idempotent}
The following statements hold:
\begin{enumerate}
\item $\chi_j\chi_k=0$ for all $j>k+1$; 
\item $u_n \chi_k = \chi_k$ for all $k<n$; 
\item $u_n \chi_k = 0$ for all $k>n+1$. 
\end{enumerate}
\end{lem}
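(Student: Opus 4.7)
The plan is to exploit the commutativity of the $C^*$-subalgebra generated by $\{u_n\}_{n\in\N}$, and to first extend the almost-idempotence relation $u_{n+1}u_n = u_n$ to arbitrary indices. Taking adjoints of $u_{n+1}u_n = u_n$ shows $u_n u_{n+1} = u_n$ as well, so the $u_n$ pairwise commute; hence each $\chi_k$, being obtained by continuous functional calculus from $u_k - u_{k-1}$ (with the convention $u_{-1} := 0$, so that $\chi_0^2 = u_0$), lies in this same commutative $C^*$-subalgebra and commutes with every $u_n$. By induction on $m-n$ one then obtains $u_m u_n = u_n$ for all $m > n$: the base case $m = n+1$ is the hypothesis, and for the inductive step, $u_m u_n = u_m u_{m-1} u_n = u_{m-1} u_n = u_n$.

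Once this extended almost-idempotence is in place, all three statements reduce to short algebraic computations on the squares, using that in the commutative subalgebra $XY = 0$ follows from $(XY)^*(XY) = 0$. For (2), with $n > k$, I would compute
\[
u_n \chi_k^2 = u_n u_k - u_n u_{k-1} = u_k - u_{k-1} = \chi_k^2 ,
\]
and then use commutativity to obtain $(u_n\chi_k - \chi_k)^*(u_n\chi_k - \chi_k) = (1 - u_n)^2 \chi_k^2 = \chi_k^2 - 2u_n\chi_k^2 + u_n^2 \chi_k^2 = 0$, which yields $u_n \chi_k = \chi_k$. For (3), with $k > n+1$, both $u_n u_k = u_n$ and $u_n u_{k-1} = u_n$ (since $k - 1 \geq n + 1 > n$), so $u_n \chi_k^2 = 0$ and therefore $u_n \chi_k = 0$. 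For (1), with $j > k+1$, I would expand
\[
\chi_j^2 \chi_k^2 = u_j u_k - u_{j-1} u_k - u_j u_{k-1} + u_{j-1} u_{k-1} ;
\]
since $j - 1 \geq k+1 > k-1$, the extended almost-idempotence collapses the four cross-terms to $u_k - u_k - u_{k-1} + u_{k-1} = 0$, whence $\chi_j \chi_k = 0$.

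I do not anticipate any genuine obstacle here: once commutativity and the extended relation $u_m u_n = u_n$ for $m > n$ are in hand, the three statements follow from direct functional calculus bookkeeping, and the $k = 0$ boundary cases are absorbed uniformly into the convention $u_{-1} := 0$.
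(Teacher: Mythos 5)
Your proof is correct and follows essentially the same route as the paper: compute the relevant products of squares using the (extended) almost-idempotence $u_mu_n=u_n$ for $m>n$, then descend to the elements themselves via commutativity of the family $\{u_n\}$ and functional calculus. The only cosmetic differences are that you prove the extended relation by induction (the paper just asserts it) and you finish with the $C^*$-identity $\|x\|^2=\|x^*x\|$, where the paper instead notes that commuting positive elements have commuting square roots and takes square roots of the identities on squares.
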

\begin{proof}
\begin{enumerate}
\item Since $\{u_n\}$ is almost idempotent, we know that $u_j u_k = u_k$ for all $j>k$. Then for $j>k+1$ we have 
\begin{align*}
\chi_j^2 \chi_k^2 
&= (u_j-u_{j-1})(u_k-u_{k-1}) 
= (u_j-u_{j-1}) u_k - (u_j-u_{j-1}) u_{k-1} \\
&= u_k-u_k - u_{k-1} + u_{k-1} 
= 0 .
\end{align*}
In particular, $\chi_j^2$ commutes with $\chi_k^2$, and therefore their square roots 
$\chi_j$ and $\chi_k$ also commute and we see that $\chi_j\chi_k = (\chi_j^2\chi_k^2)^{\frac12} = 0$. 

\item For $n>k$ we have 
\[
u_n^2 \chi_k^2 
= u_n^2 (u_k-u_{k-1}) 
= u_k-u_{k-1} 
= \chi_k^2 .
\]
In particular, $u_n^2$ commutes with $\chi_k^2$, which implies that $u_n$ commutes with $\chi_k$ and $u_n\chi_k=\chi_k$. 

\item For $k>n+1$ we have 
\[
u_n \chi_k^2 
= u_n (u_k-u_{k-1}) 
= u_n - u_n 
= 0 . 
\]
In particular, $u_n$ commutes with $\chi_k^2$, which implies that $u_n^{\frac12}$ commutes with $\chi_k$ and $u_n\chi_k = u_n^{\frac12} \big( u_n \chi_k^2 \big)^{\frac12} = 0$. 
\qedhere
\end{enumerate}
\end{proof}

We pick a sequence of elements $v_k\in\{u_n\}_{n\in\N}$ such that $v_k u_{k+1} = u_{k+1}$ for each $k\in\N$ (the simplest choice is of course to take $v_k=u_{k+2}$, but in later sections it will be convenient to choose $v_k=u_{k+3}$ or $v_k=u_{k+4}$, so we allow for this additional flexibility). 
Pick a sequence $\{\alpha_k\}_{k\in\N} \subset (0,\infty)$ of strictly positive numbers, and consider the operators
\begin{align*}
\D_k &:= v_k \D v_k , & 
F_{\alpha_k\D_k} := \alpha_k\D_k (1+\alpha_k^2\D_k^2)^{-\frac12} .
\end{align*}

Since $\{u_n\}_{n\in\N} \subset \A \subset \Lip^*(\D)$, we know from \cref{lem:loc_sa} that (the closure of) $\D_k$ is regular and self-adjoint, and that $\Dom\D$ is a core for $\D_k$. 
In particular, the operator $F_{\alpha_k\D_k}$ is well-defined via continuous functional calculus. 

\begin{defn}
\label{defn:Higson}
For any sequence $\{\alpha_k\}_{k\in\N} \subset (0,\infty)$ of strictly positive numbers, we define the \emph{localised representative} of $\D$ as 
\begin{align*}
\til F_\D(\alpha) := \sum_{k=0}^\infty \chi_k F_{\alpha_k\D_k} \chi_k .
\end{align*}
\end{defn}

\begin{lem}
The operator $\til F_\D(\alpha)$ is well-defined as a strongly convergent series. 
\end{lem}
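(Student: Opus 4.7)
The plan is to show that for each $\psi\in E$ the partial sums $S_N\psi := \sum_{k=0}^N \chi_k F_{\alpha_k\D_k}\chi_k\,\psi$ form a Cauchy sequence in $E$, whence the series converges strongly; the same argument applied to the adjoints $F_{\alpha_k\D_k}^*$ will simultaneously give strong convergence of the adjoint series, so the limit $\til F_\D(\alpha)$ is an adjointable endomorphism.

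The key structural input is the almost-disjointness from \cref{lem:almost_idempotent}: $\chi_j\chi_k=0$ whenever $|j-k|\geq 2$. Hence, for indices $k\neq k'$ of the same parity, the vectors $\eta_k := \chi_k F_{\alpha_k\D_k}\chi_k\psi$ are mutually orthogonal in the $B$-valued inner product,
\[
\la \eta_k \mvert \eta_{k'}\ra = \la F_{\alpha_k\D_k}\chi_k\psi \mvert \chi_k\chi_{k'}\, F_{\alpha_{k'}\D_{k'}}\chi_{k'}\psi\ra = 0 .
\]
I would therefore split $S_N\psi = S_N^{\mathrm{ev}}\psi + S_N^{\mathrm{od}}\psi$ into its even- and odd-indexed parts and treat each parity class by an orthogonality argument.

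For each diagonal term, since $\|F_{\alpha_k\D_k}\|\leq 1$ (continuous functional calculus) and $0\leq \chi_k^2\leq u_k\leq 1$, the positive endomorphism $F_{\alpha_k\D_k}^*\,\chi_k^2\, F_{\alpha_k\D_k}$ is bounded above by $1$, so the standard Hilbert-module inequality $0\leq A\leq 1 \Rightarrow \la\xi\mvert A\xi\ra\leq \la\xi\mvert\xi\ra$ gives
\[
\la\eta_k\mvert\eta_k\ra \;=\; \la\chi_k\psi \mvert F_{\alpha_k\D_k}^*\chi_k^2 F_{\alpha_k\D_k}\chi_k\psi\ra \;\leq\; \la\chi_k\psi\mvert\chi_k\psi\ra \;=\; \la\psi\mvert\chi_k^2\psi\ra .
\]
Combining orthogonality with the telescoping identity $\sum_{k=M+1}^{N}\chi_k^2 = u_N - u_M$, one then obtains, for $M<N$,
\[
\big\| S_N^{\mathrm{ev}}\psi - S_M^{\mathrm{ev}}\psi \big\|^2 \;=\; \Big\| \sum_{\substack{k\text{ even}\\ M<k\leq N}} \la\eta_k\mvert\eta_k\ra \Big\|_B \;\leq\; \big\| \la\psi\mvert (u_N - u_M)\psi\ra \big\|_B ,
\]
and analogously for the odd partial sums.

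Since $\pi$ is essential, \cref{remark:approx_unit} ensures $u_n\psi\to\psi$ in norm, so $\la\psi\mvert u_n\psi\ra$ is Cauchy in $B$ and the right-hand side tends to $0$ as $M,N\to\infty$. This makes $S_N\psi$ Cauchy in $E$ and completes the proof. The main thing to be careful about is bookkeeping with $B$-valued inner products in the orthogonality and monotonicity steps; no further analytic input beyond the partition-of-unity properties is required, and I do not foresee any serious obstacle.
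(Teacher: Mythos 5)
Your argument is correct, but it follows a genuinely different route from the paper's. The paper's proof has two steps: first, $\til F_\D(\alpha)u_n\psi$ is a \emph{finite} sum for every $n$ (because $u_n\chi_k=0$ for $k>n+1$ by \cref{lem:almost_idempotent}), so the series converges on the dense subspace $\{u_n\psi \mid n\in\N,\ \psi\in E\}$; second, the partial sums are uniformly bounded via the operator inequality $\pm\sum_{k=0}^K\chi_kF_{\alpha_k\D_k}\chi_k\leq\sum_{k=0}^K\chi_k^2=u_K\leq1$, and strong convergence on a dense set plus uniform boundedness yields strong convergence on all of $E$. You instead split by parity, use $\chi_j\chi_k=0$ for $|j-k|\geq2$ to make the same-parity terms orthogonal, and combine the diagonal bound $\la\eta_k\mvert\eta_k\ra\leq\la\psi\mvert\chi_k^2\psi\ra$ with the telescoping identity to get a direct Cauchy estimate; essentiality of $\pi$ enters for you through $u_n\psi\to\psi$, and for the paper through the density of $\{u_n\psi\}$. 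All the individual steps you use are valid ($\chi_k^*=\chi_k$, $F_{\alpha_k\D_k}^*\chi_k^2F_{\alpha_k\D_k}\leq1$, monotonicity of the norm on positive elements of $B$). Your approach is self-contained and yields a quantitative tail bound $\|(\til F_\D(\alpha)-S_M)\psi\|\leq2\,\|\la\psi\mvert(1-u_M)\psi\ra\|^{1/2}$, and your closing observation on adjointability is fine since the partial sums are self-adjoint. The paper's route is shorter and additionally records $\pm\til F_\D(\alpha)\leq1$, i.e.\ the sharp norm bound $\|\til F_\D(\alpha)\|\leq1$ (your orthogonality estimate only gives the constant $2$), which fits the role of $\til F_\D(\alpha)$ as a substitute for the bounded transform.
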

\begin{proof}
First, since for each $n$ the sum $\sum_{k=0}^\infty \chi_k u_n$ is finite (see \cref{lem:almost_idempotent}), we see that $\til F_\D(\alpha) u_n \psi$ is a finite (hence convergent) sum for each $\psi\in E$. 
Hence $\til F_\D(\alpha)$ converges strongly on the dense subset $\{ u_n \psi \mid n\in\N, \; \psi\in E \}$. 
Second, using the operator inequalities $\pm F_{\alpha_k\D_k} \leq \|F_{\alpha_k\D_k}\| \leq 1$, we see for $K\in\N$ that 
\[
\pm \sum_{k=0}^K \chi_k F_{\alpha_k\D_k} \chi_k \leq \sum_{k=0}^K \chi_k^2 = u_K \leq 1 .
\]
Hence the partial sums are uniformly bounded, and therefore the series converges strongly on all of $E$. 
\end{proof}

\begin{lem}[{\cite[Lemma 2.8]{vdD20_Kasp_open}}]
\label{lem:bdd_transf_rescaled}
Let $\D$ be a regular self-adjoint operator on a Hilbert $B$-module $E$. Let $a\in\End_B(E)$ such that $a(\D\pm i)^{-1}$ is compact. Then for any $\alpha>0$, the operator $a (F_\D - F_{\alpha\D})$ is compact. 
\end{lem}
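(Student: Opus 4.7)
The plan is to represent $F_\D - F_{\alpha\D}$ via the integral formula of \cref{lem:integral_formula} and then use the resolvent identity, so that after multiplying by $a$ we obtain a norm-convergent integral whose integrand is a compact operator for every value of the integration variable.

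First I would observe that $F_{\alpha\D}$ is just $F_\D^{1/\alpha}$ in the notation of \cref{sec:reg_symm}, since $\alpha\D(1+\alpha^2\D^2)^{-\frac12}=\D(\alpha^{-2}+\D^2)^{-\frac12}$. By \cref{lem:integral_formula} applied with $r=1$ and $r=1/\alpha$, we therefore have for all $\psi\in E$ the strongly convergent integral representation
\[
(F_\D-F_{\alpha\D})\psi=\frac1\pi\int_0^\infty\lambda^{-\frac12}\D\bigl(R_\D^1(\lambda)-R_\D^{1/\alpha}(\lambda)\bigr)\psi\,d\lambda.
\]
The resolvent identity gives $R_\D^1(\lambda)-R_\D^{1/\alpha}(\lambda)=(\alpha^{-2}-1)R_\D^1(\lambda)R_\D^{1/\alpha}(\lambda)$, so after multiplying by $a$ the task reduces to showing that
\[
a(F_\D-F_{\alpha\D})\psi=\frac{\alpha^{-2}-1}{\pi}\int_0^\infty\lambda^{-\frac12}\,a\D R_\D^1(\lambda)R_\D^{1/\alpha}(\lambda)\psi\,d\lambda
\]
is actually a \emph{norm}-convergent integral of compact endomorphisms.

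For compactness of the integrand I would first establish the auxiliary fact that $a(1+\D^2)^{-\frac12}$ is compact. This follows since $a(1+\D^2)^{-1}=a(\D-i)^{-1}(\D+i)^{-1}$ is compact by hypothesis, and then \cref{lem:integral_formula} with $r=1$ expresses $a(1+\D^2)^{-\frac12}$ as a norm-convergent integral $\frac1\pi\int_0^\infty\lambda^{-\frac12}a(1+\lambda+\D^2)^{-1}d\lambda$ of compact operators (each $a(1+\lambda+\D^2)^{-1}=a(1+\D^2)^{-1}\cdot(1+\D^2)(1+\lambda+\D^2)^{-1}$ being compact times bounded). Granted this, the factorisation
\[
a\D R_\D^1(\lambda)R_\D^{1/\alpha}(\lambda)=a(1+\D^2)^{-\frac12}\cdot(1+\D^2)^{\frac12}\D R_\D^1(\lambda)\cdot R_\D^{1/\alpha}(\lambda)
\]
exhibits the integrand as compact for each $\lambda$.

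For norm-integrability I would use continuous functional calculus: the bound $\bigl\|(1+\D^2)^{\frac12}\D R_\D^1(\lambda)\bigr\|\leq\sup_{x\in\R}\frac{|x|\sqrt{1+x^2}}{1+\lambda+x^2}\leq 1$ and $\|R_\D^{1/\alpha}(\lambda)\|\leq(\alpha^{-2}+\lambda)^{-1}$ imply that the integrand has norm $O\bigl(\lambda^{-\frac12}(\alpha^{-2}+\lambda)^{-1}\bigr)$, which is integrable on $(0,\infty)$. Consequently the integral converges in norm to a compact endomorphism, and $a(F_\D-F_{\alpha\D})$ is compact. The whole argument is routine once the right integral representation is set up; the only mildly delicate point is justifying the auxiliary compactness of $a(1+\D^2)^{-\frac12}$ and verifying the functional-calculus norm estimate, neither of which presents a real obstacle.
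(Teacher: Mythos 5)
Your argument is correct. Note that the paper does not prove this lemma itself --- it imports it from \cite[Lemma~2.8]{vdD20_Kasp_open} --- so the relevant comparison is with the short functional-calculus argument that the paper uses in the analogous situation in \cref{sec:local_positivity} (``Since the function $x\mapsto x(1+x^2)^{-\frac12}-x(r^2+x^2)^{-\frac12}$ lies in $C_0(\R)$\dots''). Because $\D$ is genuinely self-adjoint here, one can write $F_\D-F_{\alpha\D}=f(\D)$ with $f(x)=x(1+x^2)^{-\frac12}-x(\alpha^{-2}+x^2)^{-\frac12}$, observe that $(1+x^2)f(x)$ is bounded (indeed $f(x)=\tfrac{\alpha^{-2}-1}{2}x^{-2}+\mO(x^{-4})$ at infinity), and conclude that $a(F_\D-F_{\alpha\D})=a(\D-i)^{-1}\cdot(\D+i)^{-1}\bigl[(1+x^2)f\bigr](\D)$ is compact times bounded --- a two-line proof. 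Your route through \cref{lem:integral_formula} and the resolvent identity is heavier but entirely sound: the identification $F_{\alpha\D}=F_\D^{1/\alpha}$, the resolvent identity, the compactness of $a(1+\D^2)^{-\frac12}$, the factorisation of the integrand, and the norm bound $\mO\bigl(\lambda^{-\frac12}(\alpha^{-2}+\lambda)^{-1}\bigr)$ are all correct (for the auxiliary compactness of $a(1+\D^2)^{-\frac12}$ you could also skip the integral entirely and write it as $a(\D+i)^{-1}\cdot(\D+i)(1+\D^2)^{-\frac12}$). What the integral-formula approach buys is robustness: it is the pattern the paper must use for the merely \emph{symmetric} operators elsewhere (\cref{lem:comm_cpt}, \cref{lem:local_comparison}), where clean functional calculus is unavailable; here, with $\D$ self-adjoint, it is more machinery than the statement requires.
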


\begin{thm}
\label{thm:local_rep_KK}
Consider the setting of \cref{ass:half-closed_partition}. 
Then for any $a\in A$, the operator $a(\til F_\D(\alpha)-F_\D)$ is compact. 
Hence $(A,E_B,\til F_D(\alpha))$ is a (bounded) Kasparov $A$-$B$-module, and $[\til F_\D(\alpha)] = [F_{\D}] \in \KK(A,B)$. 
In particular, the class $[\til F_\D(\alpha)]$ is independent of the choices made in the construction. 
\end{thm}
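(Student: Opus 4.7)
The plan is to reduce the statement to a finite-sum manipulation by exploiting the near-finite-support structure of the partition of unity. Since $\{u_n\}$ is an approximate unit for $A$, we have $u_n a \to a$ in norm for every $a\in A$, so it suffices to prove that $u_m(\til F_\D(\alpha) - F_\D) \in \End_B^0(E)$ for each fixed $m\in\N$. \cref{lem:almost_idempotent} then ensures $u_m\chi_k = 0$ for $k > m+1$ and $u_m u_{m+1} = u_m$, so both the series defining $u_m\til F_\D(\alpha)$ and the identity $u_m F_\D = \sum_{k=0}^{m+1} u_m\chi_k^2 F_\D$ truncate to the same finite index range, and their difference reorganises as
\[
u_m\big(\til F_\D(\alpha) - F_\D\big) = \sum_{k=0}^{m+1} u_m\chi_k \big( (F_{\alpha_k\D_k} - F_\D)\chi_k + [F_\D, \chi_k] \big).
\]

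I would then treat each of the finitely many summands separately. The commutator piece $u_m\chi_k[F_\D,\chi_k]$ is compact because $\chi_k\in A$ and $(A,E_B,F_\D)$ is already a Kasparov module by \cref{thm:Hilsum}. For the main piece, I would use \cref{lem:almost_idempotent} in the form $\chi_k = u_{k+1}\chi_k$ to sneak in an extra approximate-unit factor:
\[
u_m\chi_k(F_{\alpha_k\D_k}-F_\D)\chi_k = u_m\chi_k \cdot u_{k+1}\big(F_{\alpha_k\D_k}-F_\D\big) \cdot \chi_k,
\]
so it suffices to establish that $u_{k+1}(F_{\alpha_k\D_k}-F_\D)$ is compact. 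I would split this as $u_{k+1}(F_{\alpha_k\D_k}-F_{\D_k}) + u_{k+1}(F_{\D_k}-F_\D)$, handling the first summand via \cref{lem:cpt_res_localisation} applied with $a=u_{k+1}$ and $b=v_k$ (the condition $u_{k+1}v_k = u_{k+1}$ is exactly what we imposed on $v_k$) in tandem with \cref{lem:bdd_transf_rescaled}, and the second summand via \cref{lem:local_comparison} applied with $a=u_{k+1}$, $c=u_{k+2}$, and $b=v_k$ (which requires choosing $v_k$ slightly larger in the approximate unit, e.g.\ $v_k = u_{k+3}$, so that $u_{k+2}v_k = u_{k+2}$ also holds).

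The resulting compactness of $a(\til F_\D(\alpha)-F_\D)$ for every $a\in A$, together with the self-adjointness of $\til F_\D(\alpha)$ (as a strong limit of a uniformly bounded sequence of self-adjoint operators), would transfer the Kasparov module axioms from $F_\D$ to $\til F_\D(\alpha)$. The identity $[\til F_\D(\alpha)] = [F_\D]$ in $\KK(A,B)$ then follows from the straight-line operator-homotopy $F_t := (1-t)F_\D + t\,\til F_\D(\alpha)$, which remains within Kasparov modules for every $t\in[0,1]$ thanks to the same compactness estimate; independence of the choices $\{\alpha_k\}$ and $\{v_k\}$ is automatic since the class $[F_\D]$ involves none of them. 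The main obstacle I foresee is that the partition-of-unity elements $\chi_k = (u_k - u_{k-1})^{1/2}$ themselves are only in $A$ and need not lie in the Lipschitz $*$-subalgebra $\A$ (only their squares $\chi_k^2 = u_k - u_{k-1}$ do), which is precisely why the technical lemmas \cref{lem:cpt_res_localisation,lem:bdd_transf_rescaled,lem:local_comparison} must be applied with the approximate-unit elements $u_{k+1}, u_{k+2}\in\A$ after inserting them via the identity $\chi_k = u_{k+1}\chi_k$.
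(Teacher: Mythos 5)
Your proof is correct and follows essentially the same route as the paper's: reduce to $u_m(\til F_\D(\alpha)-F_\D)$, truncate to a finite sum via \cref{lem:almost_idempotent}, and dispatch the three pieces with \cref{thm:Hilsum}, \cref{lem:cpt_res_localisation} combined with \cref{lem:bdd_transf_rescaled}, and \cref{lem:local_comparison}. The one small difference is in the last step: the paper applies \cref{lem:local_comparison} with $a=\chi_k$, $c=u_{k+1}$, $b=v_k$ — which is permissible because that lemma only requires $a\in A$ (not $a\in\A$) — and therefore covers every admissible choice of $v_k$, including the minimal one $v_k=u_{k+2}$; your substitution $a=u_{k+1}$, $c=u_{k+2}$ forces $u_{k+2}v_k=u_{k+2}$ and hence $v_k\geq u_{k+3}$, so as written your argument proves the theorem only for that slightly restricted range of choices.
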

\begin{proof}
Since we have norm-convergence $a u_n \to a$, it suffices to prove the compactness of $u_n (\til F_\D(\alpha)-F_\D)$. 
Recall that $\til F_\D(\alpha) = \sum_{k=0}^\infty \chi_k F_{\alpha_k\D_k} \chi_k$, where $\D_k = v_k\D v_k$. 
By applying \cref{lem:cpt_res_localisation} (using $\chi_k = \chi_k u_{k+1}$ from \cref{lem:almost_idempotent}) we know that $\chi_k (\D_k\pm i)^{-1}$ is compact; hence we can apply \cref{lem:bdd_transf_rescaled} to see that $\chi_k (F_{\alpha_k\D_k} - F_{\D_k})$ is compact. 
We know from \cref{thm:Hilsum} that the commutator $[F_\D,\chi_k] $ is compact. 
Furthermore, applying \cref{lem:local_comparison} (with $a=\chi_k$, $c=u_{k+1}$, and $b=v_k$) we know that $\chi_k(F_{\D_k}-F_{\D})$ is compact. 
From \cref{lem:almost_idempotent} we know that $u_n \til F_\D(\alpha)$ is given by a finite sum, and therefore
\begin{align*}
u_n (\til F_\D(\alpha)-F_{\D}) 
&= \sum_k u_n \chi_k F_{\alpha_k\D_k} \chi_k - u_n F_{\D} 
\stackrel{\ref{lem:bdd_transf_rescaled}}{\sim} \sum_k u_n \big( \chi_k F_{\D_k} \chi_k - \chi_k^2 F_{\D} \big) \\
&\stackrel{\ref{thm:Hilsum}}{\sim} \sum_k u_n \chi_k ( F_{\D_k} - F_{\D} ) \chi_k 
\stackrel{\ref{lem:local_comparison}}{\sim} 0 .
\qedhere
\end{align*}
\end{proof}

\section{Local positivity}
\label{sec:local_positivity}

The aim in this section is to show that the local positivity condition (see \cref{defn:local_positivity_condition} below) implies a \emph{localised} version of the positivity condition in \cref{thm:Connes-Skandalis}. 
This was already proven by the author for the case of first-order differential operators on smooth manifolds \cite[Proposition 3.1]{vdD20_Kasp_open}, and in fact, many of the arguments of \cite[\S3]{vdD20_Kasp_open} can be adapted to the following more abstract context. 

\begin{assumption}
\label{ass:local_positivity}
Let $\D$ be an odd regular symmetric operator on a $\Z_2$-graded Hilbert $C$-module $E$, 
let $\mS$ be an odd regular self-adjoint operator on $E$, 
and let $\chi,\rho,\phi,v\in\End_C(E)$ be even and self-adjoint. 
We define the operator $\D_v := v\D v$.
We assume that the following conditions hold:
\begin{enumlocal}
\item \label{ass:loc_pos_Dom}
$\Dom(\D v) \cap \Ran(v) \subset \Dom(\mS v)$;
\item \label{ass:loc_pos_approx_unit}
$\rho\chi=\chi$, $\phi\rho=\rho$, $v\rho=\rho$, $v\phi=\phi v$, $\|\rho\| \leq 1$, and $\|\phi\| \leq 1$;
\item \label{ass:loc_pos_Lip}
$\rho,\phi\in\Lip(\D)\cap\Lip(\mS)$ and $v\in\Lip^*(\D)\cap\Lip(\mS)$; 
\item \label{ass:loc_pos_cpt_res}
$\chi(1+\D_v^2)^{-\frac12}$ is compact;
\item \label{ass:loc_pos_condition}
there exists a constant $c\in[0,\infty)$ such that for all $\psi\in\Dom(\D v)\cap\Ran(v)$ we have 
\begin{align*}
Q_v(\phi\psi) 
:= \big\la\D v\phi\psi \bigmvert v\mS\phi\psi \big\ra + \big\la v\mS\phi\psi \bigmvert \D v\phi\psi \big\ra 
\geq - c \big\la \phi\psi \bigmvert (1+\D_v^2)^{\frac12}\phi\psi \big\ra .
\end{align*}
\end{enumlocal}
\end{assumption}

Since $v\in\Lip^*(\D)$, we note that $\D_v$ is regular and self-adjoint (see \cref{lem:loc_sa}). Since $\rho,\phi\in\Lip\D$ and $\rho,\phi$ commute with $v$, we know that we also have $\rho,\phi\in\Lip\D_v$. 
Furthermore, since $\phi$ preserves $\Dom(\D v)\cap\Ran(v)$ and we have $\Dom(\D v)\cap\Ran(v) \subset \Dom(\mS v)=\Dom(v\mS)$, we see that $Q_v$ is well-defined. 

\begin{notation}
\label{notation2}
For $\lambda,\mu\in[0,\infty)$ and for $r\in(0,\infty)$, we use the notation 
\begin{align*}
R_{\D_v}^r(\lambda) &:= (r^2+\lambda+\D_v^2)^{-1} , & 
R_\mS(\mu) &:= (1+\mu+\mS^2)^{-1} . 
\end{align*}
We consider the following bounded operators: 
\begin{align*}
k_{\D_v}^r(\lambda) &:= \sqrt{r^2+\lambda} R_{\D_v}^r(\lambda), & 
k_\mS(\mu) &:= \sqrt{1+\mu} R_\mS(\mu) , \\
h_{\D_v}^r(\lambda) &:= \D_v R_{\D_v}^r(\lambda) , & 
h_\mS(\mu) &:= \mS R_\mS(\mu) . 
\end{align*}
We also define the operator 
\begin{align*}
B(\lambda) &:= [\mS,\rho] \phi h_{\D_v}^r(\lambda) \rho .
\end{align*}
We now \emph{redefine} the operators $M_m(\lambda,\mu)$, using $\D_v$ instead of $\D$, and inserting $\rho$: 
\begin{align*}
M_1(\lambda,\mu) &:= h_{\D_v}^r(\lambda) \rho h_\mS(\mu) , & 
M_2(\lambda,\mu) &:= k_{\D_v}^r(\lambda) \rho h_\mS(\mu) , \\
M_3(\lambda,\mu) &:= h_{\D_v}^r(\lambda) \rho k_\mS(\mu) , & 
M_4(\lambda,\mu) &:= k_{\D_v}^r(\lambda) \rho k_\mS(\mu) .
\end{align*}
Furthermore, we define
\begin{align*}
\hat B(\lambda,\mu) &:= 2 \Re \big\la B(\lambda) k_\mS(\mu) \chi \psi \bigmvert k_\mS(\mu) \chi \psi \big\ra + 2 \Re \big\la B(\lambda) h_\mS(\mu) \chi \psi \bigmvert h_\mS(\mu) \chi \psi \big\ra , \\
\hat M(\lambda,\mu) &:= \sum_{m=1}^4 2 \Re \big\la [\phi,\D] v M_m(\lambda,\mu) \chi \psi \bigmvert v \mS \phi M_m(\lambda,\mu) \chi \psi \big\ra , \\
\hat Q(\lambda,\mu) &:= \sum_{m=1}^4 Q_v\big( \phi M_m(\lambda,\mu) \chi \psi \big) . 
\end{align*}
\end{notation}

\begin{lem}[{\cite[Lemma 3.4]{KS19}}]
\label{lem:resolvent_Dv2}
For any $r\in\R$ with $|r| > \big\|[\D,v] v\big\|$, 
the operator 
\[
ir+\D v^2\colon\Dom(\D v^2)\to E 
\]
is bijective, its inverse $(ir+\D v^2)^{-1}$ is adjointable on $E$, and we have the equality
\[
(ir+\D_v)^{-1} v = v (ir+\D v^2)^{-1} . 
\]
\end{lem}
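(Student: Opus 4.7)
The plan is to recognise $\D v^2$ as a bounded perturbation of $\D_v = v\D v$, so that the result reduces to the self-adjointness of $\D_v$ (\cref{lem:loc_sa}) combined with a Neumann-series inversion. Since $v \in \Lip^*(\D)$, the commutator $[\D,v]$ is bounded and $v \cdot \Dom\D \subset \Dom\D$, so $[\D,v]v$ is bounded and adjointable on $E$.

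First I would establish the identity $\D v^2 = \D_v + [\D,v]v$ on $\Dom\D_v$. On the pre-closure domain $\{\phi : v\phi \in \Dom\D\}$ this is immediate, because $v^2\phi = v(v\phi) \in \Dom\D$ (using $v \in \Lip\D$), hence $\phi \in \Dom(\D v^2)$ and $\D(v^2\phi) = v\D(v\phi) + [\D,v](v\phi) = \D_v\phi + [\D,v]v\phi$. A standard closure argument extends this to all of $\Dom\D_v$ and shows in particular $\Dom\D_v \subset \Dom(\D v^2)$. Since $\D_v$ is regular self-adjoint, $ir+\D_v$ maps $\Dom\D_v$ bijectively onto $E$ with $\|(ir+\D_v)^{-1}\| \leq |r|^{-1}$, so for $|r| > \|[\D,v]v\|$ the operator $[\D,v]v(ir+\D_v)^{-1}$ has norm strictly less than $1$, and $1 + [\D,v]v(ir+\D_v)^{-1}$ is boundedly invertible on $E$ by Neumann series. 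Factoring
\[
ir + \D v^2 = \bigl(1 + [\D,v]v(ir+\D_v)^{-1}\bigr)(ir+\D_v) \quad \text{on } \Dom\D_v ,
\]
exhibits $ir + \D v^2$ as a bijection $\Dom\D_v \to E$ with bounded inverse $(ir+\D_v)^{-1}\bigl(1 + [\D,v]v(ir+\D_v)^{-1}\bigr)^{-1}$, which is adjointable because both factors are.

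To upgrade this to bijectivity on the (possibly larger) domain $\Dom(\D v^2)$ and to derive the intertwining formula, I would argue as follows. If $\psi \in \Dom(\D v^2)$ satisfies $(ir+\D v^2)\psi = 0$, then $v\psi$ lies in the pre-closure domain of $\D_v$ (since $v \cdot v\psi = v^2\psi \in \Dom\D$), and a direct computation yields $(ir+\D_v)(v\psi) = v(ir+\D v^2)\psi = 0$; thus $v\psi = 0$, hence $v^2\psi = 0$, hence $\D v^2\psi = 0$, and finally $\psi = 0$. Combined with the surjectivity above, this forces $\Dom(\D v^2) = \Dom\D_v$. Applying the same computation to $\psi := (ir+\D v^2)^{-1}\eta$ for arbitrary $\eta \in E$ yields $(ir+\D_v)(v\psi) = v\eta$, which rearranges to the stated identity $v(ir+\D v^2)^{-1} = (ir+\D_v)^{-1}v$.

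The main technical subtlety is the bookkeeping of domains: $\Dom(\D v^2)$ is defined a priori only by $v^2\psi \in \Dom\D$ and need not obviously coincide with $\Dom\D_v$. Pairing the Neumann-series surjectivity (whose image lands in $\Dom\D_v$) with the injectivity argument above is what forces the two domains to agree, and so makes the bounded-perturbation formula for $(ir+\D v^2)^{-1}$ rigorous on the full operator.
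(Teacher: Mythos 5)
Your proof is correct. The paper does not prove this lemma itself but simply cites \cite[Lemma 3.4]{KS19}, and your argument — writing $\D v^2=\D_v+\overline{[\D,v]}v$ on $\Dom\D_v$ (using that $\Dom\D$ is a core, by \cref{lem:loc_sa}), inverting $ir+\D v^2$ by a Neumann series on $\Dom\D_v$, and then using the injectivity computation $(ir+\D_v)(v\psi)=v(ir+\D v^2)\psi$ to force $\Dom(\D v^2)=\Dom\D_v$ and to extract the intertwining identity — is exactly the bounded-perturbation argument used in that reference, with the hypothesis $|r|>\|[\D,v]v\|$ entering precisely where you use it.
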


\begin{lem}[cf.\ {\cite[Lemma 6.15]{KS19}}]
\label{lem:range_k_h}
For any $r > \big\|[\D,v] v\big\|$,
we have the following inclusions:
\begin{align*}
\Ran\big(k_{\D_v}^r(\lambda) v\big) &\subset \Dom(\D v)\cap\Ran(v) , &
\Ran\big(h_{\D_v}^r(\lambda) v\big) &\subset \Dom(\D v)\cap\Ran(v) . 
\end{align*}
\end{lem}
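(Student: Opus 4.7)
The plan is to reduce the claim to \cref{lem:resolvent_Dv2} by splitting the resolvent of $\D_v^2$ into its two linear factors and transporting $v$ past each of them. Set $s := \sqrt{r^2+\lambda}$; since $\lambda\geq 0$, we have $s \geq r > \|[\D,v]v\|$, so \cref{lem:resolvent_Dv2} applies with $\pm s$ in place of $r$, giving the identity
\[
(\pm is + \D_v)^{-1}\, v \;=\; v\,(\pm is + \D v^2)^{-1}
\qquad \text{on } E.
\]

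Using the self-adjointness of $\D_v$, I would then perform a partial-fraction decomposition of $R_{\D_v}^r(\lambda) = (s^2+\D_v^2)^{-1}$, namely
\[
R_{\D_v}^r(\lambda) \;=\; \frac{1}{2is}\bigl[(-is+\D_v)^{-1} - (is+\D_v)^{-1}\bigr], \qquad
\D_v R_{\D_v}^r(\lambda) \;=\; \frac{1}{2}\bigl[(-is+\D_v)^{-1} + (is+\D_v)^{-1}\bigr].
\]
Substituting and applying the identity above to each summand, both $k_{\D_v}^r(\lambda) v\psi$ and $h_{\D_v}^r(\lambda) v\psi$ are exhibited in the form $v\xi$, where $\xi$ is an explicit linear combination of $(\pm is+\D v^2)^{-1}\psi$; in particular $\xi\in\Dom(\D v^2)$. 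This gives the $\Ran(v)$ inclusion for both operators.

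It remains to verify the domain inclusion. By definition $\Dom(\D v) = \{\eta\in E : v\eta\in\Dom\D\}$, while $\xi\in\Dom(\D v^2)$ means precisely $v^2\xi = v(v\xi)\in\Dom\D$. Hence $v\xi\in\Dom(\D v)$, which is exactly what we need for both $k_{\D_v}^r(\lambda)v\psi$ and $h_{\D_v}^r(\lambda)v\psi$. I do not anticipate any serious obstacle; the only real care required is bookkeeping to keep the three distinct domains $\Dom\D_v=\overline{\Dom(v\D v)}$, $\Dom(\D v)$, and $\Dom(\D v^2)$ straight, but \cref{lem:resolvent_Dv2} was designed precisely to bridge these, so the argument amounts to a direct substitution.
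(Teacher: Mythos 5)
Your argument is correct and is essentially the paper's proof: both rest on \cref{lem:resolvent_Dv2} applied with $\pm s$, $s=\sqrt{r^2+\lambda}$, together with the factorisation of $s^2+\D_v^2$ into the linear resolvents $(\pm is+\D_v)^{-1}$, followed by the observation that $v\cdot\Dom(\D v^2)\subset\Dom(\D v)\cap\Ran(v)$. The only (cosmetic) difference is that you use the partial-fraction (sum) decomposition of $R_{\D_v}^r(\lambda)$ and $\D_v R_{\D_v}^r(\lambda)$, whereas the paper writes $R_{\D_v}^r(\lambda)v=(is+\D_v)^{-1}v(-is+\D v^2)^{-1}$ as a product and handles $\D_v R_{\D_v}^r(\lambda)v$ via $\D_v R=(-is+\D_v)^{-1}-isR$; both manipulations are valid since $\D_v$ is regular and self-adjoint.
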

\begin{proof}
From \cref{lem:resolvent_Dv2} we know for any $r > \big\|[\D,v] v\big\|$ that 
\[
\Ran\big( (ir+\D_v)^{-1} v \big) = v\cdot\Dom(\D v^2) = \Dom(\D v) \cap \Ran(v) . 
\]
The inclusions then follow, because we can rewrite 
\begin{align*}
R_{\D_v}^r(\lambda) v 
&= \big( i\sqrt{r^2+\lambda} + \D_v \big)^{-1} v \big( -i\sqrt{r^2+\lambda} + \D v^2 \big)^{-1} , \\
\D_v R_{\D_v}^r(\lambda) v 
&= \big( -i\sqrt{r^2+\lambda} + \D_v \big)^{-1} v - i\sqrt{r^2+\lambda} R_{\D_v}^r(\lambda) v . 
\qedhere
\end{align*}
\end{proof}

We will study the positivity (modulo compact operators) of the operator $\chi [F_{\D_v},F_\mS] \chi$. 
Using $\rho\chi=\chi$, we can rewrite 
\begin{align*}
\chi [F_{\D_v},F_\mS] \chi 
&= \chi (\rho F_{\D_v} F_\mS + F_\mS F_{\D_v} \rho)\chi \\
&= \chi (F_{\D_v}\rho F_\mS + F_\mS\rho F_{\D_v})\chi + \chi ([\rho,F_{\D_v}] F_\mS + F_\mS [F_{\D_v},\rho])\chi .
\end{align*}
We note that $[F_{\D_v},\rho]\chi$ is compact by \cref{lem:comm_cpt}.\ref{lem:comm_cpt_F} (using $\rho\in\Lip(\D_v)$ and condition \ref{ass:loc_pos_cpt_res}), and therefore it suffices to consider instead the operator $\chi (F_{\D_v}\rho F_\mS + F_\mS\rho F_{\D_v})\chi$. 
Furthermore, consider for any $0\neq r\in\R$ the operator 
\[
F_{\D_v}^r := \D_v (r^2+\D_v^2)^{-\frac12} . 
\]
Since the function $x\mapsto x(1+x^2)^{-\frac12} - x(r^2+x^2)^{-\frac12}$ lies in $C_0(\R)$, we know that also $(F_{\D_v}-F_{\D_v}^r)\chi$ is compact. Hence we may replace $F_{\D_v}$ by $F_{\D_v}^r$. 
Applying \cref{lem:integral_formula} twice, we then rewrite 
\begin{align}
\label{eq:comm_integral}
\big\la \psi \bigmvert \chi (F_{\D_v}^r\rho F_\mS + F_\mS\rho F_{\D_v}^r) \chi \psi \big\ra 
&= 2 \Re \big\la \chi \psi \bigmvert F_{\D_v}^r\rho F_\mS \chi \psi \big\ra \\
&= \frac{1}{\pi^2} \int_0^\infty \int_0^\infty (\lambda\mu)^{-\frac12} 2 \Re \big\la \chi \psi \bigmvert \D_v R_{\D_v}^r(\lambda) \rho \mS R_\mS(\mu) \chi \psi \big\ra d\lambda d\mu . \nonumber
\end{align}
Our first task is to study the integrand on the right-hand-side. Via a straightforward but somewhat tedious calculation, we will rewrite this integrand in terms of the operators defined above. 

\begin{lem}
\label{lem:positivity_local_expr}
For any $\psi\in E$ we have 
\begin{align*}
2 \Re \big\la \chi \psi \bigmvert \D_v R_{\D_v}^r(\lambda) \rho \mS R_\mS(\mu) \chi \psi \big\ra &= \hat B(\lambda,\mu) + \hat M(\lambda,\mu) + \hat Q(\lambda,\mu) .
\end{align*}
\end{lem}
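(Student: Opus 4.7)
The plan is to verify the identity by a direct algebraic expansion, using the commutation relations from \ref{ass:loc_pos_approx_unit} together with the resolvent identities $(h_{\D_v}^r(\lambda))^2 + (k_{\D_v}^r(\lambda))^2 = R_{\D_v}^r(\lambda)$ and $h_\mS(\mu)^2 + k_\mS(\mu)^2 = R_\mS(\mu)$. First I would rewrite the left-hand side as $2\Re\la \chi\psi \mvert h_{\D_v}^r(\lambda) \rho h_\mS(\mu)\chi\psi\ra$ via $\D_v R_{\D_v}^r(\lambda) = h_{\D_v}^r(\lambda)$ and $\mS R_\mS(\mu) = h_\mS(\mu)$. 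From $\phi\rho = \rho$, $\rho\chi = \chi$ and the self-adjointness of all factors one obtains the dual identities $\rho\phi = \rho$ and $\chi\rho = \chi$, which allow free insertion of $\phi$'s and $\rho$'s around the middle of the expression.

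Second, I would handle the $[\mS,\rho]$-commutator, which is the source of the $\hat B$ contribution. Writing $\rho h_\mS(\mu) = h_\mS(\mu)\rho + [\rho,h_\mS(\mu)]$ and then using the resolvent identity $R_\mS(\mu) = h_\mS(\mu)^2 + k_\mS(\mu)^2$ to decompose the $[\rho,h_\mS(\mu)]$ piece, one produces exactly two sandwiched expressions of the form $\la X\chi\psi\mvert [\mS,\rho]\phi h_{\D_v}^r(\lambda)\rho X\chi\psi\ra$ with $X \in \{h_\mS(\mu), k_\mS(\mu)\}$, matching the definition of $\hat B(\lambda,\mu)$.

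Third, for the remaining terms I would show they sum to $\hat Q + \hat M$. Unpacking $\hat Q$ as $\sum_m 2\Re\la \D v\phi M_m\chi\psi \mvert v\mS\phi M_m\chi\psi\ra$ and combining with $\hat M$, the crucial algebraic identity
\[
\D v\phi + [\phi,\D] v = \phi \D v \quad\text{on } \Dom(\D v) \cap \Ran(v),
\]
which follows from $\phi v = v\phi$ (so $\D\phi v = \D v\phi$), gives
\[
\hat Q + \hat M = \sum_{m=1}^{4} 2\Re\la \phi \D v M_m(\lambda,\mu)\chi\psi \mvert v\mS\phi M_m(\lambda,\mu)\chi\psi\ra.
\]
Summing the four $M_m$ using both resolvent identities then collapses this to $2\Re\la \chi\psi \mvert h_{\D_v}^r(\lambda)\rho h_\mS(\mu)\chi\psi\ra$ minus the $[\mS,\rho]$ contribution already absorbed into $\hat B$, completing the identity.

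The main obstacle will be domain bookkeeping, because $\D$ is only symmetric rather than self-adjoint, so expressions like $\D v\phi \xi$ are only meaningful when $\phi\xi \in \Dom(\D v) \cap \Ran(v)$. \cref{lem:range_k_h} together with $\rho = v\rho$ ensures that $M_m(\lambda,\mu)\chi\psi \in \Dom(\D v)\cap\Ran(v)$, and since $\phi v = v\phi$ this property is preserved after multiplication by $\phi$; assumption \ref{ass:loc_pos_Dom} then also guarantees that $v\mS\phi M_m \chi\psi$ is well-defined. With these observations in place, each manipulation of commutators (in particular, passing $\phi$ through $\D_v$ via $[\D_v,\phi] = v[\D,\phi]v$, and passing $\rho$ through $\mS R_\mS(\mu)$) is legitimate, and the final verification reduces to a (tedious but routine) accounting of commutator terms that matches the three pieces $\hat B$, $\hat M$, $\hat Q$ on the nose.
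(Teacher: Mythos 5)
Your plan correctly isolates the two ingredients that make the paper's computation work: the identity $\D v\phi + [\phi,\D]v = \phi\D v$ on $\Dom(\D v)\cap\Ran(v)$, which is exactly how $\hat Q+\hat M$ recombine into $\sum_{m}2\Re\la\phi\D v M_m(\lambda,\mu)\chi\psi\mvert v\mS\phi M_m(\lambda,\mu)\chi\psi\ra$, and the domain bookkeeping via \cref{lem:range_k_h}, $v\rho=\rho$ and $\phi v=v\phi$. The paper runs the same calculation forwards: it writes $\chi\psi=\rho(1+\mu+\mS^2)R_\mS(\mu)\chi\psi$ in the \emph{left} slot of the inner product, so that after splitting, $k_\mS(\mu)$ resp.\ $h_\mS(\mu)$ appear in \emph{both} slots; it then inserts $\rho=\phi\cdot\phi\rho$ and commutes $\phi\rho$ past the unbounded operator $\mS$ once in each of the two resulting terms (producing $\hat B$), and finally splits the surviving terms with the $\lambda$-resolvent identity and applies $\phi\D v=[\phi,\D]v+\D v\phi$ to obtain $\hat M+\hat Q$.

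Your step 2, however, does not produce $\hat B$, and this is a genuine gap. Commuting $\rho$ past the bounded operator $h_\mS(\mu)$ gives
\[
[\rho,h_\mS(\mu)] \;=\; k_\mS(\mu)[\rho,\mS]k_\mS(\mu) \;-\; h_\mS(\mu)[\rho,\mS]h_\mS(\mu) ,
\]
so your commutator contribution is $2\Re\big\la\chi\psi\bigmvert h_{\D_v}^r(\lambda)\bigl(k_\mS(\mu)[\rho,\mS]k_\mS(\mu)-h_\mS(\mu)[\rho,\mS]h_\mS(\mu)\bigr)\chi\psi\big\ra$. This is not of the form $\hat B(\lambda,\mu)$: the two pieces carry a relative \emph{minus} sign whereas $\hat B$ is a sum; the factor $\phi h_{\D_v}^r(\lambda)\rho$ inside $B(\lambda)$ is absent; and the $k_\mS(\mu),h_\mS(\mu)$ factors sit to the \emph{right} of $h_{\D_v}^r(\lambda)$, so they cannot be moved into the left slot without commuting them past the $\lambda$-resolvent, which generates further error terms. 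Correspondingly, the ``main term'' you are left with, $2\Re\la\chi\psi\mvert h_{\D_v}^r(\lambda)h_\mS(\mu)\chi\psi\ra$, has lost the $\rho$ that still sits inside every $M_m(\lambda,\mu)$, so it cannot equal what $\hat M+\hat Q$ reassemble to; the two halves of your bookkeeping do not meet. The fix is the ordering used in the paper: symmetrise the $\mu$-resolvent across the two slots of the inner product \emph{first}, and only then commute $\rho$ past the unbounded $\mS$; the single bounded commutator $[\phi\rho,\mS]=[\rho,\mS]$ arising in each of the two pieces, composed with $\phi h_{\D_v}^r(\lambda)\rho$ and sandwiched by matching $k_\mS(\mu)$'s or $h_\mS(\mu)$'s, is precisely $\hat B(\lambda,\mu)$.
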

\begin{proof}
We compute:
\begin{align}
\label{eq:local_expr}
&\big\la \chi \psi \bigmvert \D_v R_{\D_v}^r(\lambda) \rho \mS R_\mS(\mu) \chi \psi \big\ra \nonumber\\
&= \big\la \rho (1+\mu+\mS^2)R_\mS(\mu) \chi \psi \bigmvert \D_v R_{\D_v}^r(\lambda) \rho \mS R_\mS(\mu) \chi \psi \big\ra \nonumber\\
&= \big\la \rho k_\mS(\mu) \chi \psi \bigmvert \D_v R_{\D_v}^r(\lambda) \phi^2\rho \mS k_\mS(\mu) \chi \psi \big\ra 
+ \big\la \phi^2\rho \mS h_\mS(\mu) \chi \psi \bigmvert \D_v R_{\D_v}^r(\lambda) \rho h_\mS(\mu) \chi \psi \big\ra \nonumber\\
&= \big\la \rho k_\mS(\mu) \chi \psi \bigmvert \D_v R_{\D_v}^r(\lambda) \phi[\phi\rho,\mS] k_\mS(\mu) \chi \psi \big\ra 
+ \big\la \rho k_\mS(\mu) \chi \psi \bigmvert \D_v R_{\D_v}^r(\lambda) \phi \mS \phi\rho k_\mS(\mu) \chi \psi \big\ra \nonumber\\
&\quad+ \big\la \phi [\phi\rho,\mS] h_\mS(\mu) \chi \psi \bigmvert \D_v R_{\D_v}^r(\lambda) \rho h_\mS(\mu) \chi \psi \big\ra 
+ \big\la \mS \phi\rho h_\mS(\mu) \chi \psi \bigmvert \phi \D_v R_{\D_v}^r(\lambda) \rho h_\mS(\mu) \chi \psi \big\ra \nonumber\\
&= \big\la B(\lambda) k_\mS(\mu) \chi \psi \bigmvert k_\mS(\mu) \chi \psi \big\ra 
+ \big\la h_\mS(\mu) \chi \psi \bigmvert B(\lambda) h_\mS(\mu) \chi \psi \big\ra \nonumber\\
&\quad+ \big\la \phi \D_v R_{\D_v}^r(\lambda) \rho k_\mS(\mu) \chi \psi \bigmvert \mS \phi\rho k_\mS(\mu) \chi \psi \big\ra 
+ \big\la \mS \phi\rho h_\mS(\mu) \chi \psi \bigmvert \phi \D_v R_{\D_v}^r(\lambda) \rho h_\mS(\mu) \chi \psi \big\ra ,
\end{align}
where we have inserted the definition of $B(\lambda)$. 
To rewrite the terms on the last line, we consider either $\xi = \rho k_\mS(\mu) \chi \psi$ or $\xi = \rho h_\mS(\mu) \chi \psi$. 
Since $v\rho=\rho$, we note that in both cases we have $\xi = v\xi \in\Ran(v)$, so from \cref{lem:range_k_h} and condition \ref{ass:loc_pos_Dom} we know that $k_{\D_v}^r(\lambda)\xi$ and $h_{\D_v}^r(\lambda)\xi$ lie in $\Dom(v\mS)$. 
Since $\phi$ preserves $\Dom(v\mS)$, we can compute
\begin{align*}
&\big\la \phi \D_v R_{\D_v}^r(\lambda) \xi \bigmvert \mS \phi \xi \big\ra \\
&= \big\la \phi \D v R_{\D_v}^r(\lambda) \xi \bigmvert v\mS \phi (r^2+\lambda+\D_v^2)R_{\D_v}^r(\lambda) \xi \big\ra \\
&= \big\la \phi \D v k_{\D_v}^r(\lambda) \xi \bigmvert v\mS \phi k_{\D_v}^r(\lambda) \xi \big\ra 
+ \big\la v\mS \phi h_{\D_v}^r(\lambda) \xi \bigmvert \phi \D v h_{\D_v}^r(\lambda) \xi \big\ra \\
&= \big\la [\phi,\D] v k_{\D_v}^r(\lambda) \xi \bigmvert v\mS \phi k_{\D_v}^r(\lambda) \xi \big\ra 
+ \big\la \D\phi v k_{\D_v}^r(\lambda) \xi \bigmvert v\mS \phi k_{\D_v}^r(\lambda) \xi \big\ra \\
&\quad+ \big\la v\mS \phi h_{\D_v}^r(\lambda) \xi \bigmvert [\phi,\D] v h_{\D_v}^r(\lambda) \xi \big\ra 
+ \big\la v\mS \phi h_{\D_v}^r(\lambda) \xi \bigmvert \D \phi v h_{\D_v}^r(\lambda) \xi \big\ra .
\end{align*}
Taking twice the real part, and inserting the definition of $Q_v$, this yields
\begin{multline*}
2 \Re \big\la \phi \D_v R_{\D_v}^r(\lambda) \xi \bigmvert \mS \phi \xi \big\ra 
= 2 \Re \big\la [\phi,\D] v k_{\D_v}^r(\lambda) \xi \bigmvert v\mS \phi k_{\D_v}^r(\lambda) \xi \big\ra \\
+ 2 \Re \big\la [\phi,\D] v h_{\D_v}^r(\lambda) \xi \bigmvert v\mS \phi h_{\D_v}^r(\lambda) \xi \big\ra 
+ Q_v\big( \phi k_{\D_v}^r(\lambda) \xi \big) 
+ Q_v\big( \phi h_{\D_v}^r(\lambda) \xi \big) . 
\end{multline*}
Inserting the latter into \cref{eq:local_expr}, we thus obtain 
\begin{align*}
&2 \Re \big\la \chi \psi \bigmvert \D_v R_{\D_v}^r(\lambda) \rho \mS R_\mS(\mu) \chi \psi \big\ra \\
&= 2 \Re \big\la B(\lambda) k_\mS(\mu) \chi \psi \bigmvert k_\mS(\mu) \chi \psi \big\ra 
+ 2 \Re \big\la h_\mS(\mu) \chi \psi \bigmvert B(\lambda) h_\mS(\mu) \chi \psi \big\ra \\
&\quad+ 2 \Re \big\la [\phi,\D] v k_{\D_v}^r(\lambda) \rho k_\mS(\mu) \chi \psi \bigmvert v\mS \phi k_{\D_v}^r(\lambda) \rho k_\mS(\mu) \chi \psi \big\ra \\
&\quad+ 2 \Re \big\la [\phi,\D] v h_{\D_v}^r(\lambda) \rho k_\mS(\mu) \chi \psi \bigmvert v\mS \phi h_{\D_v}^r(\lambda) \rho k_\mS(\mu) \chi \psi \big\ra \\
&\quad+ Q_v\big( \phi k_{\D_v}^r(\lambda) \rho k_\mS(\mu) \chi \psi \big) 
+ Q_v\big( \phi h_{\D_v}^r(\lambda) \rho k_\mS(\mu) \chi \psi \big) \\
&\quad+ 2 \Re \big\la [\phi,\D] v k_{\D_v}^r(\lambda) \rho h_\mS(\mu) \chi \psi \bigmvert v\mS \phi k_{\D_v}^r(\lambda) \rho h_\mS(\mu) \chi \psi \big\ra \\
&\quad+ 2 \Re \big\la [\phi,\D] v h_{\D_v}^r(\lambda) \rho h_\mS(\mu) \chi \psi \bigmvert v\mS \phi h_{\D_v}^r(\lambda) \rho h_\mS(\mu) \chi \psi \big\ra \\
&\quad+ Q_v\big( \phi k_{\D_v}^r(\lambda) \rho h_\mS(\mu) \chi \psi \big) 
+ Q_v\big( \phi h_{\D_v}^r(\lambda) \rho h_\mS(\mu) \chi \psi \big) \\
&= 2 \Re \big\la B(\lambda) k_\mS(\mu) \chi \psi \bigmvert k_\mS(\mu) \chi \psi \big\ra 
+ 2 \Re \big\la B(\lambda) h_\mS(\mu) \chi \psi \bigmvert h_\mS(\mu) \chi \psi \big\ra \\
&\quad+ \sum_{m=1}^4 2 \Re \big\la [\phi,\D] v M_m(\lambda,\mu) \chi \psi \bigmvert v\mS \phi M_m(\lambda,\mu) \chi \psi \big\ra 
+ \sum_{m=1}^4 Q_v\big( \phi M_m(\lambda,\mu) \chi \psi \big) . 
\qedhere
\end{align*}
\end{proof}

\begin{lem}
\label{lem:B}
We have the inequality 
\begin{align*}
\pm \frac{1}{\pi^2} \int_0^\infty \int_0^\infty (\lambda\mu)^{-\frac12} \hat B(\lambda,\mu) d\lambda d\mu &\leq C_B \big\la \chi \psi \bigmvert \chi \psi \big\ra ,
\end{align*}
where $C_B := 2 \big\| [\mS,\rho] \big\|$. 
\end{lem}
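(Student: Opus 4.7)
The plan is to integrate first in $\lambda$ and then in $\mu$, exploiting \cref{lem:integral_formula} twice together with the self-adjoint structure that emerges after the $\lambda$-integration.

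First I would observe that $[\mS,\rho]$ is bounded by \ref{ass:loc_pos_Lip}, and skew-adjoint because $\rho$ is even and self-adjoint while $\mS$ is odd and self-adjoint, so that $B(\lambda)^* = -\rho h_{\D_v}^r(\lambda)\phi[\mS,\rho]$. Applying \cref{lem:integral_formula} to the regular self-adjoint operator $\D_v$ gives the strongly convergent identity $\frac{1}{\pi}\int_0^\infty \lambda^{-1/2}h_{\D_v}^r(\lambda)\eta\,d\lambda = F_{\D_v}^r\eta$, and the adjointable factors $[\mS,\rho]$, $\phi$, $\rho$ may be commuted through the integral. This yields
\[
\frac{1}{\pi}\int_0^\infty \lambda^{-1/2}\bigl(B(\lambda)+B(\lambda)^*\bigr)\eta\,d\lambda = X\eta, \qquad X := [\mS,\rho]\phi F_{\D_v}^r\rho - \rho F_{\D_v}^r\phi[\mS,\rho] .
\]
By construction $X$ is a bounded self-adjoint operator on $E$; using $\|\phi\|,\|\rho\|\leq 1$ from \ref{ass:loc_pos_approx_unit} and $\|F_{\D_v}^r\|\leq 1$ from spectral calculus on $\D_v$, one gets $\|X\|\leq 2\|[\mS,\rho]\|$, hence the adjointable operator inequality $\pm X \leq 2\|[\mS,\rho]\|$.

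Second, writing $\xi_1 := k_\mS(\mu)\chi\psi$ and $\xi_2 := h_\mS(\mu)\chi\psi$ and unpacking $2\Re\la B(\lambda)\xi_j | \xi_j\ra = \la\xi_j|(B(\lambda)+B(\lambda)^*)\xi_j\ra$, the previous step yields the $B$-valued estimate
\[
\pm\int_0^\infty \lambda^{-1/2}\hat B(\lambda,\mu)\,d\lambda \leq 2\pi\|[\mS,\rho]\|\bigl(\la\xi_1|\xi_1\ra + \la\xi_2|\xi_2\ra\bigr) .
\]
Using the identity $k_\mS(\mu)^2 + h_\mS(\mu)^2 = R_\mS(\mu)$, which follows directly from \cref{notation2}, together with a second application of \cref{lem:integral_formula} yielding $\frac{1}{\pi}\int_0^\infty \mu^{-1/2}R_\mS(\mu)\,d\mu = (1+\mS^2)^{-\frac12}$, integration in $\mu$ finally gives
\[
\pm \frac{1}{\pi^2}\int_0^\infty\int_0^\infty (\lambda\mu)^{-\frac12}\hat B(\lambda,\mu)\,d\lambda d\mu
\leq 2\|[\mS,\rho]\|\la\chi\psi|(1+\mS^2)^{-\frac12}\chi\psi\ra
\leq 2\|[\mS,\rho]\|\la\chi\psi|\chi\psi\ra ,
\]
using $\|(1+\mS^2)^{-\frac12}\|\leq 1$ at the last step. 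This gives the claim with $C_B = 2\|[\mS,\rho]\|$. The main technical care required is the justification of Fubini and the interchange of the $\lambda$-integration with the bounded operators $[\mS,\rho]$, $\phi$, $\rho$; these are routine for the strongly convergent, norm-dominated integrals involved, so I do not expect any substantive obstacle here.
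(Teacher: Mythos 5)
Your proposal is correct and follows essentially the same route as the paper's proof: integrate in $\lambda$ first via \cref{lem:integral_formula} to identify $\frac1\pi\int_0^\infty\lambda^{-\frac12}(B(\lambda)+B(\lambda)^*)\,d\lambda$ with the self-adjoint operator $[\mS,\rho]\phi F_{\D_v}^r\rho + \rho F_{\D_v}^r\phi[\rho,\mS]$ of norm at most $2\|[\mS,\rho]\|$, then use $k_\mS(\mu)^2+h_\mS(\mu)^2=R_\mS(\mu)$ and a second application of \cref{lem:integral_formula} to reduce the $\mu$-integral to $\la\chi\psi\mvert(1+\mS^2)^{-\frac12}\chi\psi\ra\leq\la\chi\psi\mvert\chi\psi\ra$. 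The only difference is cosmetic: you make the skew-adjointness of $[\mS,\rho]$ and the self-adjointness of the integrated operator explicit, which the paper leaves implicit.
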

\begin{proof}
We note that the integral over $\lambda$ of $\pi^{-1} \lambda^{-\frac12} \D_v R_{\D_v}^r(\lambda)$ converges strongly to $F_{\D_v}^r = \D_v(r^2+\D_v^2)^{-\frac12} \leq 1$, and thus we have the operator inequality 
\begin{align*}
\pm \frac{1}{\pi} \int_0^\infty \lambda^{-\frac12} \big( B(\lambda) + B(\lambda)^* \big) d\lambda 
&= \pm [\mS,\rho] \phi F_{\D_v}^r \rho \pm \rho F_{\D_v}^r \phi [\rho,\mS] 
\leq 2 \big\| [\mS,\rho] \big\| = C_B . 
\end{align*}
Inserting this into the definition of $\hat B(\lambda,\mu)$, the remaining integral over $\mu$ is norm-con\-ver\-gent, and using $k_\mS(\mu)^2+h_\mS(\mu)^2=R_\mS(\mu)$ we find 
\begin{align*}
&\pm \frac{1}{\pi^2} \int_0^\infty \int_0^\infty (\lambda\mu)^{-\frac12} \hat B(\lambda,\mu) d\lambda d\mu \\
&\quad\leq \frac{1}{\pi} C_B \int_0^\infty \mu^{-\frac12} \big\la k_\mS(\mu) \chi \psi \bigmvert k_\mS(\mu) \chi \psi \big\ra d\mu 
+ \frac{1}{\pi} C_B \int_0^\infty \mu^{-\frac12} \big\la h_\mS(\mu) \chi \psi \bigmvert h_\mS(\mu) \chi \psi \big\ra d\mu \\
&\quad= \frac{1}{\pi} C_B \int_0^\infty \mu^{-\frac12} \big\la \chi \psi \bigmvert R_\mS(\mu) \chi \psi \big\ra d\mu 
= C_B \big\la \chi \psi \bigmvert (1+\mS^2)^{-\frac12} \chi \psi \big\ra \leq C_B \big\la \chi \psi \bigmvert \chi \psi \big\ra . 
\qedhere
\end{align*}
\end{proof}

The following lemma generalises \cref{lem:integral_estimate_first-order}. 

\begin{lem}
\label{lem:integral_estimate_first-order2}
Let $T$ be any densely defined symmetric operator on $E$ such that $T(r^2+\D_v^2)^{-\frac12}$ is bounded. 
For $\psi\in E$, we have the inequality 
\begin{align*}
\sum_{m=1}^4 \frac{1}{\pi^2} \int_0^\infty \int_0^\infty (\mu\lambda)^{-\frac12} \big\la M_m(\lambda,\mu)\psi \bigmvert T M_m(\lambda,\mu)\psi \big\ra d\lambda d\mu 
\leq \big\| T(r^2+\D_v^2)^{-\frac12} \big\| \, \|\rho\|^2 \, \la\psi|\psi\ra . 
\end{align*}
\end{lem}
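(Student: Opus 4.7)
The plan is to reduce to the special case $T=(r^2+\D_v^2)^{\frac12}$ via an interpolation estimate, and then mimic the computation already carried out in \cref{lem:integral_estimate_first-order}, inserting $\rho$ at the appropriate point.

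First, I would apply \cref{lem:interpolation} with $P := (r^2+\D_v^2)^{\frac12}$, which is an invertible regular positive self-adjoint operator by functional calculus. Since by hypothesis $TP^{-1} = T(r^2+\D_v^2)^{-\frac12}$ is bounded, the lemma shows that $P^{-\frac12}TP^{-\frac12} = (r^2+\D_v^2)^{-\frac14}T(r^2+\D_v^2)^{-\frac14}$ extends to an adjointable endomorphism $S\in\End_C(E)$ with $\|S\|\leq c := \|T(r^2+\D_v^2)^{-\frac12}\|$. Using that $(r^2+\D_v^2)^{-\frac14}$ is bounded and self-adjoint, it follows that for every $\xi\in\Dom(r^2+\D_v^2)^{\frac12}$ (which contains the form domain we need since $\Ran M_m(\lambda,\mu)\subset\Dom(\D_v)=\Dom(r^2+\D_v^2)^{\frac12}$) one has the form bound
\begin{align*}
\bigl|\la\xi\mvert T\xi\ra\bigr| \leq c\,\bigl\la\xi\bigmvert (r^2+\D_v^2)^{\frac12}\xi\bigr\ra.
\end{align*}

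Applied pointwise inside the double integral with $\xi = M_m(\lambda,\mu)\psi$, this reduces the problem to showing
\begin{align*}
\sum_{m=1}^4 \frac{1}{\pi^2}\int_0^\infty\!\!\int_0^\infty (\lambda\mu)^{-\frac12} \bigl\la M_m(\lambda,\mu)\psi\bigmvert (r^2+\D_v^2)^{\frac12} M_m(\lambda,\mu)\psi\bigr\ra d\lambda\,d\mu \leq \|\rho\|^2 \la\psi\mvert\psi\ra.
\end{align*}
Next I would carry out the $\lambda$-integration. Using the identity $h_{\D_v}^r(\lambda)^2+k_{\D_v}^r(\lambda)^2=R_{\D_v}^r(\lambda)$ together with the strongly convergent integral
\begin{align*}
\frac{1}{\pi}\int_0^\infty \lambda^{-\frac12}(r^2+\D_v^2)^{\frac12} R_{\D_v}^r(\lambda)\,d\lambda = 1,
\end{align*}
which follows by applying \cref{lem:integral_formula} to $f(x^2)=(r^2+x^2)^{\frac12}$ (or directly to the resolvent representation of $(r^2+\D_v^2)^{-\frac12}$), the pair $(M_1,M_2)$ collapses to $\frac{1}{\pi}\int_0^\infty \mu^{-\frac12}\la\rho h_\mS(\mu)\psi\mvert \rho h_\mS(\mu)\psi\ra d\mu$, and similarly the pair $(M_3,M_4)$ collapses to the analogous expression with $k_\mS(\mu)$.

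Finally I would bound $\rho^*\rho\leq\|\rho\|^2$ and use $h_\mS(\mu)^2+k_\mS(\mu)^2=R_\mS(\mu)$ together with $\frac{1}{\pi}\int_0^\infty \mu^{-\frac12} R_\mS(\mu)\,d\mu = (1+\mS^2)^{-\frac12}\leq 1$ to conclude that the sum is bounded by $\|\rho\|^2\la\psi|(1+\mS^2)^{-\frac12}\psi\ra\leq \|\rho\|^2\la\psi|\psi\ra$. Multiplying by $c$ gives the claim. The only real obstacle is the opening step: verifying that the form $\la\xi|T\xi\ra$ makes sense on the range of the $M_m(\lambda,\mu)$ and that the interpolation bound genuinely yields the desired form inequality on the Hilbert module, but both are immediate once one notes that $\Ran(h_{\D_v}^r(\lambda))$ and $\Ran(k_{\D_v}^r(\lambda))$ both sit inside $\Dom(\D_v)=\Dom(r^2+\D_v^2)^{\frac12}$.
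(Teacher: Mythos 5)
Your proposal is correct and follows essentially the same route as the paper: both use \cref{lem:interpolation} to control $(r^2+\D_v^2)^{-\frac14}T(r^2+\D_v^2)^{-\frac14}$, deduce the form bound $\pm\la\xi\mvert T\xi\ra\leq c\,\la\xi\mvert(r^2+\D_v^2)^{\frac12}\xi\ra$ on $\Ran M_m(\lambda,\mu)\subset\Dom(\D_v)$ (the paper phrases this as the operator inequalities $h_{\D_v}^r(\lambda)Th_{\D_v}^r(\lambda)\leq c\,(r^2+\D_v^2)^{\frac12}h_{\D_v}^r(\lambda)^2$, etc.), and then perform the $\lambda$- and $\mu$-integrations in the same order using $h^2+k^2=R$ and \cref{lem:integral_formula}. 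The only difference is cosmetic (form estimate applied pointwise versus conjugated operator inequalities), so the argument is sound as written.
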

\begin{proof}
Let us consider the four integrals (for $m=1,2,3,4$) given by 
\[
I_m := \frac{1}{\pi^2} \int_0^\infty \int_0^\infty (\mu\lambda)^{-\frac12} \big\la M_m(\lambda,\mu) \psi \bigmvert T M_m(\lambda,\mu) \psi \big\ra d\lambda d\mu .
\]
Since $T(r^2+\D_v^2)^{-\frac12}$ is bounded and $T$ is symmetric, we know from \cref{lem:interpolation} that also $(r^2+\D_v^2)^{-\frac14}T(r^2+\D_v^2)^{-\frac14}$ is bounded, and 
\[
\big\| (r^2+\D_v^2)^{-\frac14}T(r^2+\D_v^2)^{-\frac14} \big\| \leq \big\| T(r^2+\D_v^2)^{-\frac12} \big\| . 
\]
We obtain the operator inequality 
\begin{align*}
h_{\D_v}^r(\lambda) T h_{\D_v}^r(\lambda) &= (r^2+\D_v^2)^{\frac14} h_{\D_v}^r(\lambda) \, (r^2+\D_v^2)^{-\frac14} T (r^2+\D_v^2)^{-\frac14} \, h_{\D_v}^r(\lambda) (r^2+\D_v^2)^{\frac14} \\
& \leq (r^2+\D_v^2)^{\frac14} h_{\D_v}^r(\lambda) \, \big\| (r^2+\D_v^2)^{-\frac14} T (r^2+\D_v^2)^{-\frac14} \big\| \, h_{\D_v}^r(\lambda) (r^2+\D_v^2)^{\frac14} \\
&\leq \big\| T (r^2+\D_v^2)^{-\frac12} \big\| \, (r^2+\D_v^2)^{\frac12} h_{\D_v}^r(\lambda)^2 . 
\end{align*}
Similarly, we also have the operator inequality 
\[
k_{\D_v}^r(\lambda) T k_{\D_v}^r(\lambda) \leq \big\| T (r^2+\D_v^2)^{-\frac12} \big\| \, (r^2+\D_v^2)^{\frac12} k_{\D_v}^r(\lambda)^2 .
\]
We note that $h_{\D_v}^r(\lambda)^2 + k_{\D_v}^r(\lambda)^2 = R_{\D_v}^r(\lambda)$. 
Moreover, by \cref{lem:integral_formula}, the integral of $\lambda^{-\frac12} (r^2+\D_v^2)^{\frac12} R_{\D_v}^r(\lambda)$ converges strongly to $\pi$, and we find 
\[
\frac{1}{\pi} \int_0^\infty \lambda^{-\frac12} \big( h_{\D_v}^r(\lambda) T h_{\D_v}^r(\lambda) + k_{\D_v}^r(\lambda) T k_{\D_v}^r(\lambda) \big) d\lambda \leq \big\| T (r^2+\D_v^2)^{-\frac12} \big\| .
\]
Thus we obtain the inequalities 
\begin{align*}
I_1 + I_2 &\leq \frac{1}{\pi} \big\| T (r^2+\D_v^2)^{-\frac12} \big\| \, \|\rho\|^2 \, \int_0^\infty \mu^{-\frac12} \big\la h_\mS(\mu) \psi \bigmvert h_\mS(\mu) \psi \big\ra d\mu , \\
I_3 + I_4 &\leq \frac{1}{\pi} \big\| T (r^2+\D_v^2)^{-\frac12} \big\| \, \|\rho\|^2 \, \int_0^\infty \mu^{-\frac12} \big\la k_\mS(\mu) \psi \bigmvert k_\mS(\mu) \psi \big\ra d\mu . 
\end{align*}
Summing up the latter two inequalities and computing the remaining norm-convergent integral over $\mu$, we obtain
\begin{align*}
\sum_{m=1}^4 I_m &\leq \frac{1}{\pi} \big\| T (r^2+\D_v^2)^{-\frac12} \big\| \, \|\rho\|^2 \, \int_0^\infty \mu^{-\frac12} \big\la \psi \bigmvert R_\mS(\mu) \psi \big\ra d\mu \\
&= \big\| T (r^2+\D_v^2)^{-\frac12} \big\| \, \|\rho\|^2 \, \big\la \psi \bigmvert (1+\mS^2)^{-\frac12} \psi \big\ra \\
&\leq \big\| T (r^2+\D_v^2)^{-\frac12} \big\| \, \|\rho\|^2 \, \big\la \psi \bigmvert \psi \big\ra . 
\qedhere
\end{align*}
\end{proof}

\begin{lem}
\label{lem:Q}
We have the inequality 
\begin{align*}
\frac{1}{\pi^2} \int_0^\infty \int_0^\infty (\lambda\mu)^{-\frac12} \hat Q(\lambda,\mu) d\lambda d\mu 
&\geq - C_Q \, \big\la \chi \psi \bigmvert \chi \psi \big\ra , 
\end{align*}
where $C_Q := c \big\| \phi (1+\D_v^2)^{\frac12} \phi (r^2+\D_v^2)^{-\frac12} \big\|$. 
\end{lem}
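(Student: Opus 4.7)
The plan is to apply the local positivity hypothesis \ref{ass:loc_pos_condition} termwise to each summand $Q_v(\phi M_m(\lambda,\mu)\chi\psi)$, and then control the resulting integral by means of \cref{lem:integral_estimate_first-order2} applied to a suitable symmetric operator $T$.

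First I would verify that for each $m$ and each $\psi\in E$ the element $M_m(\lambda,\mu)\chi\psi$ lies in $\Dom(\D v)\cap\Ran(v)$, so that \ref{ass:loc_pos_condition} applies. By \cref{notation2}, each $M_m(\lambda,\mu)$ is of the form $h_{\D_v}^r(\lambda)\rho\,(\cdot)$ or $k_{\D_v}^r(\lambda)\rho\,(\cdot)$; using $v\rho=\rho$ from \ref{ass:loc_pos_approx_unit}, we may rewrite the leftmost factor as $h_{\D_v}^r(\lambda)v\rho$ or $k_{\D_v}^r(\lambda)v\rho$, and then \cref{lem:range_k_h} gives the required membership (this implicitly requires $r>\|[\D,v]v\|$, which is consistent with the hypotheses of \cref{notation2}). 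Consequently \ref{ass:loc_pos_condition} yields
\begin{align*}
Q_v\big(\phi M_m(\lambda,\mu)\chi\psi\big) \geq - c\, \big\la M_m(\lambda,\mu)\chi\psi \bigmvert T\, M_m(\lambda,\mu)\chi\psi \big\ra ,
\end{align*}
where $T := \phi(1+\D_v^2)^{\frac12}\phi$ is defined on $\Dom((1+\D_v^2)^{\frac12}\phi)$.

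The second step is to check that $T$ meets the hypotheses of \cref{lem:integral_estimate_first-order2}. Symmetry of $T$ is immediate from the self-adjointness of $\phi$ and of $(1+\D_v^2)^{\frac12}$. Boundedness of $T(r^2+\D_v^2)^{-\frac12}$ follows by construction, and in fact
\[
\big\| T(r^2+\D_v^2)^{-\frac12} \big\| = \big\| \phi(1+\D_v^2)^{\frac12}\phi(r^2+\D_v^2)^{-\frac12} \big\| = \tfrac{1}{c}\,C_Q .
\]
\Cref{lem:integral_estimate_first-order2} then yields, using also $\|\rho\|\leq 1$ from \ref{ass:loc_pos_approx_unit},
\begin{align*}
\sum_{m=1}^4 \frac{1}{\pi^2} \int_0^\infty\!\!\int_0^\infty (\lambda\mu)^{-\frac12} \big\la M_m(\lambda,\mu)\chi\psi \bigmvert T\, M_m(\lambda,\mu)\chi\psi \big\ra d\lambda\, d\mu
\leq \tfrac{1}{c}\,C_Q\, \big\la \chi\psi \bigmvert \chi\psi \big\ra .
\end{align*}

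Combining the two steps, summing over $m$, multiplying by $-c$, and integrating (the order of integration/summation being justified by the norm convergence established in the proof of \cref{lem:integral_estimate_first-order2}) gives the claimed inequality. The only non-routine point in the argument is the verification that the pointwise application of \ref{ass:loc_pos_condition} is legitimate, i.e.\ that $M_m(\lambda,\mu)\chi\psi\in\Dom(\D v)\cap\Ran(v)$; this is the place where the identity $v\rho=\rho$ is essential and where \cref{lem:range_k_h} is used. Everything else reduces to the bookkeeping lemma \cref{lem:integral_estimate_first-order2}.
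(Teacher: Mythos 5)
Your proposal is correct and follows essentially the same route as the paper: apply condition \ref{ass:loc_pos_condition} termwise to each $Q_v(\phi M_m(\lambda,\mu)\chi\psi)$ and then invoke \cref{lem:integral_estimate_first-order2} with $T=\phi(1+\D_v^2)^{\frac12}\phi$. The extra verification that $M_m(\lambda,\mu)\chi\psi\in\Dom(\D v)\cap\Ran(v)$ (via $v\rho=\rho$ and \cref{lem:range_k_h}) is a detail the paper leaves implicit here (it is carried out in the proof of \cref{lem:positivity_local_expr}), and including it is a welcome addition rather than a deviation.
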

\begin{proof}
By condition \ref{ass:loc_pos_condition} in \cref{ass:local_positivity}, we have the inequality 
\[
Q_v\big( \phi M_m(\lambda,\mu) \chi \psi \big) \geq - c \big\la M_m(\lambda,\mu) \chi \psi \bigmvert \phi (1+\D_v^2)^{\frac12} \phi M_m(\lambda,\mu) \chi \psi \big\ra . 
\]
The statement then follows by applying \cref{lem:integral_estimate_first-order2} with the operator $T = \phi (1+\D_v^2)^{\frac12} \phi$. 
\end{proof}

It remains to consider the term $\hat M(\lambda,\mu)$ in \cref{lem:positivity_local_expr}. 
Of course, in the special case where $\phi=\Id$, we have $\hat M(\lambda,\mu) = 0$, and then we obtain the following result.

\begin{prop}
\label{prop:local_positivity_phi=1}
Consider the setting of \cref{ass:local_positivity}.
Suppose furthermore that $\phi=\Id$. 
Then for any $0<\kappa<2$ there exists an $\alpha>0$ such that the operator $\chi [ F_{\D_v} , F_{\alpha \mS} ] \chi + \kappa \chi^2$ is positive modulo compact operators:
\[
\chi [F_{\D_v} , F_{\alpha \mS}] \chi \gtrsim - \kappa \chi^2 .
\]
\end{prop}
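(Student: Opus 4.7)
The plan is to combine the integral representation \eqref{eq:comm_integral} (with $\mS$ replaced by $\alpha\mS$) with the algebraic identity of \cref{lem:positivity_local_expr} and the bounds from \cref{lem:B} and \cref{lem:Q}, exploiting the fact that the hypothesis $\phi=\Id$ forces $[\phi,\D]=0$ and hence $\hat M(\lambda,\mu)\equiv 0$. First I would fix any $r > \big\|[\D,v]v\big\|$ so that the localised resolvents behave as in \cref{lem:range_k_h}. As already noted just before \eqref{eq:comm_integral}, using $\rho\chi=\chi$ one has $[F_{\D_v},\rho]\chi$ compact (via \cref{lem:comm_cpt}.\ref{lem:comm_cpt_F} applied with $\rho\in\Lip(\D_v)$ and $\chi(1+\D_v^2)^{-1/2}$ compact by \ref{ass:loc_pos_cpt_res}), and $(F_{\D_v}-F_{\D_v}^r)\chi$ is compact because $x(1+x^2)^{-1/2}-x(r^2+x^2)^{-1/2}\in C_0(\R)$. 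Together these reductions yield
\[
\chi[F_{\D_v},F_{\alpha\mS}]\chi \sim \chi\bigl(F_{\D_v}^r\rho F_{\alpha\mS}+F_{\alpha\mS}\rho F_{\D_v}^r\bigr)\chi,
\]
and applying \eqref{eq:comm_integral} rewrites the right-hand side as a double integral whose integrand is exactly the form analysed in \cref{lem:positivity_local_expr}.

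Next, I would apply \cref{lem:positivity_local_expr} with $\mS$ replaced by $\alpha\mS$ and $\phi=\Id$: the integrand becomes $\hat B(\lambda,\mu)+\hat Q(\lambda,\mu)$ since $\hat M$ vanishes. Then \cref{lem:B} and \cref{lem:Q} bound the integrated versions. Under the rescaling $\mS\mapsto\alpha\mS$, the operator $[\mS,\rho]$ scales by $\alpha$, so $C_B$ becomes $2\alpha\big\|[\mS,\rho]\big\|$; and since the form $Q_v$ is linear in $\mS$, the constant $c$ from \ref{ass:loc_pos_condition} rescales to $\alpha c$, so with $\phi=\Id$ the constant $C_Q$ becomes $\alpha c\big\|(1+\D_v^2)^{1/2}(r^2+\D_v^2)^{-1/2}\big\|$. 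With $r$ held fixed, both constants are $O(\alpha)$ as $\alpha\to 0^+$. Combining the two form estimates and absorbing the $\sim$-errors from the first step into a single compact $K$, one obtains for all $\psi\in E$ the form inequality
\[
\la\psi|\chi[F_{\D_v},F_{\alpha\mS}]\chi\psi\ra \geq -(C_B+C_Q)\la\psi|\chi^2\psi\ra + \la\psi|K\psi\ra,
\]
which (since all three operators involved are self-adjoint) upgrades to the operator inequality $\chi[F_{\D_v},F_{\alpha\mS}]\chi \gtrsim -(C_B+C_Q)\chi^2$. Given any $\kappa\in(0,2)$, I would then choose $\alpha>0$ small enough that $C_B+C_Q\leq\kappa$, and the claim follows.

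The only real obstacle is careful bookkeeping: tracking how each constant depends on $\alpha$ (and verifying that $r$ can be chosen first, independently of $\alpha$), and checking that the various compact error terms produced by the $\sim$-reductions in the first step can all be gathered into one compact operator $K$. The substantive analytic content --- the double-integral formula, the decomposition of the integrand, and the quadratic-form bounds --- has already been carried out in the preceding lemmas, and the hypothesis $\phi=\Id$ neutralises the one term ($\hat M$) that would otherwise require additional control. In fact the argument produces the bound for any $\kappa>0$; the restriction $\kappa<2$ in the statement is natural for the later applications (cf.\ \cref{thm:Connes-Skandalis}) but is not needed here.
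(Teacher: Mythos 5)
Your proposal is correct and follows essentially the same route as the paper's proof: reduce to $\chi(F_{\D_v}^r\rho F_{\alpha\mS}+F_{\alpha\mS}\rho F_{\D_v}^r)\chi$ modulo compacts, apply \cref{lem:positivity_local_expr} with $\hat M=0$ since $\phi=\Id$, bound via \cref{lem:B,lem:Q}, and shrink $\alpha$ so that the constants (which scale linearly in $\alpha$) fall below $\kappa$. Your closing observations --- that $r$ depends only on $\|[\D,v]v\|$ and not on $\alpha$, and that the argument in fact works for any $\kappa>0$ --- are both accurate.
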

\begin{proof}
As mentioned at the start of this section, we know that $\chi [F_{\D_v} , F_\mS] \chi$ is equal to $\chi \big( F_{\D_v}^r \rho F_\mS + F_\mS \rho F_{\D_v}^r \big) \chi$ modulo compact operators. 
For any $\psi\in E$, we thus need to estimate the integral in \cref{eq:comm_integral}. 
From \cref{lem:positivity_local_expr} we obtain the equality 
\begin{align*}
\big\la \psi \bigmvert \chi \big( F_{\D_v}^r \rho F_\mS + F_\mS \rho F_{\D_v}^r \big) \chi \psi \big\ra 
&= \frac{1}{\pi^2} \int_0^\infty \int_0^\infty (\lambda\mu)^{-\frac12} \big( \hat B(\lambda,\mu) + \hat M(\lambda,\mu) + \hat Q(\lambda,\mu) \big) d\lambda d\mu . 
\end{align*}
Since $\phi=\Id$, we have $\hat M(\lambda,\mu)=0$. 
Using \cref{lem:B,lem:Q}, we then obtain the estimate 
\begin{align*}
\big\la \psi \bigmvert \chi \big( F_{\D_v}^r \rho F_{\mS} + F_{\mS} \rho F_{\D_v}^r \big) \chi \psi \big\ra \geq - ( C_B + C_Q ) \lla\chi\psi\rra . 
\end{align*}
Since this holds for any $\psi$, we have the operator inequality
\[
\chi \big( F_{\D_v}^r \rho F_{\mS} + F_{\mS} \rho F_{\D_v}^r \big) \chi \geq - ( C_B + C_Q ) \chi^2 .
\]
We have therefore shown that 
\[
\chi [F_{\D_v} , F_{\mS}] \chi \gtrsim - ( C_B + C_Q ) \chi^2 .
\]
Next, we fix $0<\kappa<2$. 
If we replace $\mS$ by $\alpha \mS$ for some $\alpha>0$, then the constants $C_B$ and $C_Q$ are replaced by $\alpha C_B$ and $\alpha C_Q$, respectively. Indeed, for $C_B = 2 \big\| [\mS,\rho] \big\|$ this is obvious, and we note that $C_Q$ is proportional to the constant $c$ from condition \ref{ass:loc_pos_condition} in \cref{ass:local_positivity}. 
Thus, by choosing $\alpha$ small enough, we can ensure that $\alpha ( C_B + C_Q ) < \kappa < 2$. 
\end{proof}

In the following, we provide two different sufficient conditions which allow us to deal with the term $\hat M(\lambda,\mu)$ also for non-trivial $\phi$. 

\begin{prop}
\label{prop:local_positivity_strong}
In addition to the setting of \cref{ass:local_positivity}, suppose furthermore that the following strengthening of condition \ref{ass:loc_pos_condition} is satisfied:
\begin{enumlocal}[label={(L\arabic*')}]
\addtocounter{enumlocali}{-1}
\item \label{ass:strong_loc_pos_condition}
there exist constants $\nu\in(0,\infty)$ and $c\in[0,\infty)$ such that for all $\psi\in\Dom(\D v)\cap\Ran(v)$ we have 
\begin{align}
Q_v(\phi\psi) \geq \nu \big\lla v\mS\phi\psi\big\rra - c \big\la \phi\psi \bigmvert (1+\D_v^2)^{\frac12}\phi\psi \big\ra .
\end{align}
\end{enumlocal}
Then for any $0<\kappa<2$ there exists an $\alpha>0$ such that the operator $\chi [ F_{\D_v} , F_{\alpha \mS} ] \chi + \kappa \chi^2$ is positive modulo compact operators:
\[
\chi [F_{\D_v} , F_{\alpha \mS}] \chi \gtrsim - \kappa \chi^2 .
\]
\end{prop}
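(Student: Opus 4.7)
The proof will follow the same roadmap as the proof of \cref{prop:local_positivity_phi=1}, but the extra term $\hat M(\lambda,\mu)$ (which vanished there because $\phi=\Id$) must now be absorbed using the positive contribution coming from the strengthened hypothesis \ref{ass:strong_loc_pos_condition}. First, discard the compact operators coming from $F_{\D_v}-F_{\D_v}^r$ and from $[\rho,F_{\D_v}]\chi$, exactly as at the start of the proof of \cref{prop:local_positivity_phi=1}, and apply \cref{lem:positivity_local_expr} with $\mS$ replaced by $\alpha\mS$ throughout, yielding
\begin{align*}
\big\la\psi\bigmvert \chi(F_{\D_v}^r\rho F_{\alpha\mS} + F_{\alpha\mS}\rho F_{\D_v}^r)\chi\psi\big\ra = \frac{1}{\pi^2}\int_0^\infty\!\!\int_0^\infty (\lambda\mu)^{-\frac12}\big(\hat B^\alpha + \hat M^\alpha + \hat Q^\alpha\big)\,d\lambda\,d\mu,
\end{align*}
where the superscript $\alpha$ denotes that $\mS$ is replaced by $\alpha\mS$ in \cref{notation2}. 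The term $\hat B^\alpha$ is controlled as in \cref{lem:B}, contributing at most $\alpha C_B \lla\chi\psi\rra$ with $C_B = 2\|[\mS,\rho]\|$.

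The key new step is to combine $\hat M^\alpha$ and $\hat Q^\alpha$ \emph{before} integrating. For each $m$ we use the elementary inequality $2\Re\la X|Y\ra \geq -\epsilon^{-1}\lla X\rra - \epsilon\lla Y\rra$ with $X = [\phi,\D]v M_m^\alpha(\lambda,\mu)\chi\psi$ and $Y = v\alpha\mS\phi M_m^\alpha(\lambda,\mu)\chi\psi$. Well-definedness of $Y$ follows from \cref{lem:range_k_h} together with \ref{ass:loc_pos_Dom} and the fact that $\phi$ commutes with $v$ and preserves $\Dom\D$. Since $\lla Y\rra = \alpha^2\lla v\mS\phi M_m^\alpha\chi\psi\rra$ and $\hat Q^\alpha = \alpha\sum_m Q_v(\phi M_m^\alpha\chi\psi)$ produces (via \ref{ass:strong_loc_pos_condition}) a positive term $\alpha\nu\sum_m\lla v\mS\phi M_m^\alpha\chi\psi\rra$, the choice $\epsilon = \nu/\alpha$ makes the $v\mS\phi M_m^\alpha$-contributions cancel exactly, leaving
\begin{align*}
\hat M^\alpha(\lambda,\mu) + \hat Q^\alpha(\lambda,\mu) \geq -\frac{\alpha}{\nu}\sum_{m=1}^4 \lla [\phi,\D]v M_m^\alpha\chi\psi\rra - \alpha c \sum_{m=1}^4 \big\la \phi M_m^\alpha\chi\psi \bigmvert (1+\D_v^2)^{\frac12}\phi M_m^\alpha\chi\psi\big\ra.
\end{align*}

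The two remaining sums are integrated using \cref{lem:integral_estimate_first-order2} (applied with $\mS$ replaced by $\alpha\mS$): take $T = v^*[\phi,\D]^*[\phi,\D]v$ (a bounded positive operator, hence trivially satisfying the hypothesis) for the first, and $T = \phi(1+\D_v^2)^{\frac12}\phi$ for the second (as in \cref{lem:Q}); in both cases the resulting bound is of the form $K_i\|\rho\|^2\lla\chi\psi\rra$ with $K_i$ independent of $\alpha$. Combining everything, we obtain the operator inequality
\begin{align*}
\chi [F_{\D_v},F_{\alpha\mS}]\chi \gtrsim -\alpha\big(C_B + \nu^{-1}K_1\|\rho\|^2 + cK_2\|\rho\|^2\big)\chi^2,
\end{align*}
and for any prescribed $0<\kappa<2$ we may take $\alpha>0$ small enough that the bracketed constant is less than $\kappa$. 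The main delicate point is the bookkeeping of the $\alpha$-scaling: the Cauchy--Schwarz parameter must be chosen as $\epsilon = \nu/\alpha$ (rather than simply $\epsilon=\nu$) in order to match the $\alpha^2$ from $\lla Y\rra$ against the $\alpha$ from $\hat Q^\alpha = \alpha\sum Q_v$, and one must verify carefully that the resulting coefficient $\alpha/\nu$ of $\lla[\phi,\D]v M_m^\alpha\chi\psi\rra$ does indeed yield an overall $\alpha$-linear estimate so that the small-$\alpha$ argument closes.
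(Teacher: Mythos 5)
Your proposal is correct and follows the paper's proof almost verbatim: same decomposition via \cref{lem:positivity_local_expr} into $\hat B+\hat M+\hat Q$, same control of $\hat B$ by \cref{lem:B}, same use of Cauchy--Schwarz to absorb $\hat M$ into the positive term supplied by \ref{ass:strong_loc_pos_condition}, and same application of \cref{lem:integral_estimate_first-order2} to the remaining negative terms. The one tactical difference is the choice of Cauchy--Schwarz parameter: the paper fixes a large $\beta$ (independent of $\alpha$) so that $\beta^{-2}\|v[\D,\phi][\phi,\D]v(r^2+\D_v^2)^{-\frac12}\|<\kappa/3$, keeps the leftover $(\nu\alpha^{-1}-\beta^2)P\geq0$ as a discarded non-negative term, and then splits $\kappa$ into three pieces; you instead take $\epsilon=\nu/\alpha$ so the positive term cancels exactly and the entire remainder is manifestly $O(\alpha)$, which is a slightly cleaner way to close the small-$\alpha$ argument and is equally valid.
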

\begin{proof}
First of all, for any $\beta>0$ we can estimate 
\begin{align*}
\pm \hat M(\lambda,\mu) &= \pm \sum_{m=1}^4 2 \Re \big\la \beta^{-1}[\phi,\D] v M_m(\lambda,\mu) \chi \psi \bigmvert \beta v \mS \phi M_m(\lambda,\mu) \chi \psi \big\ra \\
&\leq \sum_{m=1}^4 \beta^2 \big\lla v \mS \phi M_m(\lambda,\mu) \chi \psi \big\rra + \beta^{-2} \big\lla [\phi,\D] v M_m(\lambda,\mu) \chi \psi \big\rra .
\end{align*}
Combined with condition \ref{ass:strong_loc_pos_condition} this yields 
\begin{align*}
\hat M(\lambda,\mu) + \hat Q(\lambda,\mu) 
&\geq (\nu - \beta^2) \big\lla v \mS \phi M_m(\lambda,\mu) \chi \psi \big\rra - \beta^{-2} \big\lla [\phi,\D] v M_m(\lambda,\mu) \chi \psi \big\rra \\
&\quad- c \big\la \phi M_m(\lambda,\mu) \chi \psi \bigmvert (1+\D_v^2)^{\frac12}\phi M_m(\lambda,\mu) \chi \psi \big\ra .
\end{align*}
Taking the double integral of this inequality, and estimating the second and third terms by applying \cref{lem:integral_estimate_first-order2} with $T = \beta^{-2} v [\D,\phi][\phi,\D] v + c \phi(1+\D_v^2)^{\frac12}\phi$, we obtain 
\begin{align}
\label{eq:estimate_strong}
\frac{1}{\pi^2} \int_0^\infty \int_0^\infty (\lambda\mu)^{-\frac12} \big( \hat M(\lambda,\mu) + \hat Q(\lambda,\mu) \big) d\lambda d\mu 
\geq (\nu-\beta^2) P - C_M(\beta,c) \lla\chi\psi\rra , 
\end{align}
where we define 
\begin{align*}
P &:= \frac{1}{\pi^2} \int_0^\infty \int_0^\infty (\lambda\mu)^{-\frac12} \big\lla v \mS \phi M_m(\lambda,\mu) \chi \psi \big\rra d\lambda d\mu \geq 0 , \\
C_M(\beta,c) &:= \beta^{-2} \big\| v [\D,\phi] [\phi,\D] v (r^2+\D_v^2)^{-\frac12} \big\| + c \big\| \phi (1+\D_v^2)^{\frac12} \phi (r^2+\D_v^2)^{-\frac12} \big\| .
\end{align*}
As in the proof of \cref{prop:local_positivity_phi=1}, we need to consider the integral 
\begin{align*}
\big\la \psi \bigmvert \chi \big( F^r_{\D_v} \rho F_\mS + F_\mS \rho F^r_{\D_v} \big) \chi \psi \big\ra 
&= \frac{1}{\pi^2} \int_0^\infty \int_0^\infty (\lambda\mu)^{-\frac12} \big( \hat B(\lambda,\mu) + \hat M(\lambda,\mu) + \hat Q(\lambda,\mu) \big) d\lambda d\mu . 
\end{align*}
Combining \cref{eq:estimate_strong} with \cref{lem:B}, we obtain the estimate 
\begin{align}
\label{eq:estimate_strong2}
\big\la \psi \bigmvert \chi \big( F^r_{\D_v} \rho F_\mS + F_\mS \rho F^r_{\D_v} \big) \chi \psi \big\ra \geq (\nu-\beta^2) P - \big( C_B + C_M(\beta,c) \big) \lla\chi\psi\rra . 
\end{align}
We now replace $\mS$ by $\alpha \mS$ for some $\alpha\in(0,\infty)$. 
We recall that $C_B$ is then replaced by $\alpha C_B$. 
Moreover, we note that condition \ref{ass:strong_loc_pos_condition} implies 
\[
\big\la \D_v \phi\psi \bigmvert \alpha \mS\phi\psi \big\ra + \big\la \alpha \mS\phi\psi \bigmvert \D_v \phi\psi \big\ra 
\geq \alpha^{-1} \nu \big\la \alpha v\mS \phi \psi \bigmvert \alpha v\mS \phi \psi \big\ra - \alpha c \big\la \phi\psi \bigmvert (1+\D_v^2)^{\frac12}\phi\psi \big\ra ,
\]
and hence $\nu$ and $c$ are replaced by $\alpha^{-1} \nu$ and $\alpha c$. 
We choose $\beta$ large enough such that 
\[
\beta^{-2} \big\| v [\D,\phi] [\phi,\D] v (r^2+\D_v^2)^{-\frac12} \big\| < \frac13 \kappa .
\]
We then choose $\alpha$ small enough such that 
\begin{align*}
\alpha^{-1} \nu - \beta^2 &> 0 , & 
\alpha c \big\| \phi (1+\D_v^2)^{\frac12} \phi (r^2+\D_v^2)^{-\frac12} \big\| &< \frac13\kappa , & 
\alpha C_B &< \frac13\kappa . 
\end{align*}
These choices ensure that $\alpha C_B + C_M(\beta,\alpha c) < \kappa$. 
Thus from \cref{eq:estimate_strong2} we obtain 
\begin{align*}
\big\la \psi \bigmvert \chi \big( F^r_{\D_v} \rho F_{\alpha \mS} + F_{\alpha \mS} \rho F^r_{\D_v} \big) \chi \psi \big\ra \geq - \kappa \lla\chi\psi\rra . 
\end{align*}
Since this holds for any $\psi$, we have the operator inequality
\[
\chi \big( F^r_{\D_v} \rho F_{\alpha \mS} + F_{\alpha \mS} \rho F^r_{\D_v} \big) \chi \geq - \kappa \chi^2 .
\]
Since $\chi [F_{\D_v} , F_{\alpha \mS}] \chi$ is equal to $\chi \big( F^r_{\D_v} \rho F_{\alpha \mS} + F_{\alpha \mS} \rho F^r_{\D_v} \big) \chi$ modulo compact operators, this completes the proof.
\end{proof}

\begin{prop}
\label{prop:local_positivity_C2}
In addition to the setting of \cref{ass:local_positivity}, suppose furthermore that the following condition is satisfied:
\begin{enumlocal}
\item \label{ass:weak_C2}
The operators $\phi$ and $\phi[\D_v,\phi]$ map $\Dom(\D_v)$ to $\Dom \mS$. 
\end{enumlocal}
Then for any $0<\kappa<2$ there exists an $\alpha>0$ such that the operator $\chi [ F_{\D_v} , F_{\alpha \mS} ] \chi + \kappa \chi^2$ is positive modulo compact operators:
\[
\chi [F_{\D_v} , F_{\alpha \mS}] \chi \gtrsim - \kappa \chi^2 .
\]
\end{prop}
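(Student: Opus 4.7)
The proof follows the same scheme as \cref{prop:local_positivity_strong}, the only new difficulty being the treatment of $\hat M(\lambda,\mu)$ (the terms $\hat B$ and $\hat Q$ are controlled exactly as in \cref{lem:B,lem:Q}). The key observation is that (L6), combined with the closed graph theorem for Hilbert modules, implies that the operators $\mS\phi(r^2+\D_v^2)^{-\frac12}$ and $\mS\phi[\D_v,\phi](r^2+\D_v^2)^{-\frac12}$ extend to bounded endomorphisms of $E$.

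To use this, we start (as in \cref{prop:local_positivity_strong}) from the identity $v[\phi,\D]v=-[\D_v,\phi]$ on $\Dom(\D_v)$ and the self-adjointness of $v$ to rewrite
\[
2\Re\big\la[\phi,\D]v\xi\bigmvert v\mS\phi\xi\big\ra = -2\Re\big\la[\D_v,\phi]\xi\bigmvert\mS\phi\xi\big\ra .
\]
To move $\mS$ across the inner product when $\xi\in\Dom(\D_v)$ may fail to lie in $\Dom(\mS)$, we regularise: replace $\mS$ by the bounded self-adjoint operator $\mS_\epsilon:=\mS(1+\epsilon\mS^2)^{-\frac12}$ for $\epsilon>0$. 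For bounded $\mS_\epsilon$ all manipulations are licit, and using that $\phi$ and $\mS_\epsilon$ are bounded self-adjoint together with $\phi\mS_\epsilon=\mS_\epsilon\phi+[\phi,\mS_\epsilon]$ we obtain
\[
-2\Re\big\la[\D_v,\phi]\xi\bigmvert\mS_\epsilon\phi\xi\big\ra = -\big\la\xi\bigmvert Y_\epsilon\xi\big\ra - \big\la\xi\bigmvert Z_\epsilon\xi\big\ra ,
\]
where $Y_\epsilon := \mS_\epsilon\phi[\D_v,\phi] + [\D_v,\phi]^*\phi\mS_\epsilon$ and $Z_\epsilon := [\phi,\mS_\epsilon][\D_v,\phi] + [\D_v,\phi]^*[\mS_\epsilon,\phi]$ are symmetric endomorphisms. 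Crucially, one checks via the bound $\|(1+\epsilon\mS^2)^{-\frac12}\|\leq1$ and condition (L6) that $\|Y_\epsilon(r^2+\D_v^2)^{-\frac12}\|$ and $\|Z_\epsilon(r^2+\D_v^2)^{-\frac12}\|$ are bounded uniformly in $\epsilon$.

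Applying \cref{lem:integral_estimate_first-order2} to $T=Y_\epsilon$ and $T=Z_\epsilon$, summing and letting $\epsilon\to0$, we obtain an estimate of the form $\frac{1}{\pi^2}\int_0^\infty\!\!\int_0^\infty(\mu\lambda)^{-\frac12}\hat M(\lambda,\mu)\,d\lambda\,d\mu\geq -C_M\lla\chi\psi\rra$, where the constant $C_M$ depends linearly on $\mS$ (since $Y_\epsilon$ and $Z_\epsilon$ do). Combining this with the estimates from \cref{lem:B,lem:Q}, and using the compactness of $\chi(F_{\D_v}-F^r_{\D_v})$ and of $[F_{\D_v},\rho]\chi$, we deduce $\chi[F_{\D_v},F_\mS]\chi \gtrsim -(C_B+C_M+C_Q)\chi^2$. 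Scaling $\mS\to\alpha\mS$ scales all three constants linearly in $\alpha$, so for any $0<\kappa<2$ we may choose $\alpha$ small enough that $\alpha(C_B+C_M+C_Q)<\kappa$, which yields the claim. The main technical obstacle is the rigorous justification of the $\epsilon\to0$ limit inside the double integral, which should follow from dominated convergence combined with the $\epsilon$-uniform operator bounds above.
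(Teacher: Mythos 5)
Your proposal follows the same scheme as the paper's proof: the only new ingredient is the bound on the double integral of $\hat M(\lambda,\mu)$, obtained by recognising the integrand as a quadratic form of a symmetric operator whose relative $\D_v$-bound comes from \ref{ass:weak_C2} via the closed graph theorem, and then invoking \cref{lem:integral_estimate_first-order2}; the rescaling $\mS\mapsto\alpha\mS$ at the end is identical. The one deviation is your regularisation $\mS\mapsto\mS_\epsilon$, which the paper avoids entirely: it applies \cref{lem:integral_estimate_first-order2} directly to the densely defined symmetric operator $T=[\D_v,\phi]\mS\phi+\phi\mS[\phi,\D_v]$, where the second summand is read as $\mS\phi[\phi,\D_v]+[\phi,\mS][\phi,\D_v]$ so that both pieces composed with $(r^2+\D_v^2)^{-\frac12}$ are bounded by \ref{ass:weak_C2} and $\phi\in\Lip(\mS)$. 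Concerning the "main technical obstacle" you flag: you do not need to interchange $\epsilon\to0$ with the double integral. Since your $\epsilon$-uniform bounds give the \emph{pointwise} inequality $\pm\hat M_\epsilon(\lambda,\mu)\leq C\sum_m\la M_m(\lambda,\mu)\chi\psi\mvert(r^2+\D_v^2)^{\frac12}M_m(\lambda,\mu)\chi\psi\ra$ in $C$ for each fixed $(\lambda,\mu)$, and $\hat M_\epsilon(\lambda,\mu)\to\hat M(\lambda,\mu)$ in norm, positivity passes to the limit pointwise, and only then do you integrate (exactly as in the proof of \cref{lem:integral_estimate_first-order2}); this sidesteps any dominated-convergence argument, which would in any case be delicate since the integrals here are only conditionally (strongly) convergent.
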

\begin{proof}
First, we will derive an estimate for the double integral of $\hat M(\lambda,\mu)$. 
Since by assumption $\phi$ maps $\Dom(\D_v)$ to $\Dom(\mS)$, we know that $\mS\phi(r^2+\D_v^2)^{-\frac12}$ is bounded, and therefore also $[\D_v,\phi]\mS\phi(r^2+\D_v^2)^{-\frac12}$ is bounded. 
Furthermore, we have assumed that $\phi[\D_v,\phi]$ also maps $\Dom(\D_v)$ to $\Dom(\mS)$, which ensures that $\phi \mS[\phi,\D_v](r^2+\D_v^2)^{-\frac12} = \mS\phi [\phi,\D_v](r^2+\D_v^2)^{-\frac12} + [\phi,\mS] [\phi,\D_v](r^2+\D_v^2)^{-\frac12}$ is bounded as well. 
By applying \cref{lem:integral_estimate_first-order2} with the symmetric operator $T = [\D_v,\phi]\mS\phi + \phi \mS[\phi,\D_v]$, we obtain the inequality 
\begin{align*}
\pm \frac{1}{\pi^2} \int_0^\infty \int_0^\infty (\lambda\mu)^{-\frac12} \hat M(\lambda,\mu) d\lambda d\mu &\leq C_M \big\la \chi \psi \bigmvert \chi \psi \big\ra , 
\end{align*}
where $C_M := \big\| \big( [\D_v,\phi]\mS\phi + \phi \mS[\phi,\D_v] \big) (r^2+\D_v^2)^{-\frac12} \big\|$. 
As in the proof of \cref{prop:local_positivity_phi=1}, we need to consider the integral 
\begin{align*}
\big\la \psi \bigmvert \chi \big( F^r_{\D_v} \rho F_\mS + F_\mS \rho F^r_{\D_v} \big) \chi \psi \big\ra 
&= \frac{1}{\pi^2} \int_0^\infty \int_0^\infty (\lambda\mu)^{-\frac12} \big( \hat B(\lambda,\mu) + \hat M(\lambda,\mu) + \hat Q(\lambda,\mu) \big) d\lambda d\mu . 
\end{align*}
Combining \cref{lem:B,lem:Q} with the above inequality for $\hat M(\lambda,\mu)$, we now obtain the estimate 
\begin{align*}
\big\la \psi \bigmvert \chi \big( F^r_{\D_v} \rho F_\mS + F_\mS \rho F^r_{\D_v} \big) \chi \psi \big\ra \geq - ( C_B + C_Q + C_M ) \lla\chi\psi\rra . 
\end{align*}
The proof then proceeds exactly as in \cref{prop:local_positivity_phi=1}. 
\end{proof}

\begin{remark}
The condition \ref{ass:weak_C2} is quite naturally satisfied in the context of first-order differential operators $\mS$ and $\D$ on smooth manifolds, when $\phi,v$ are compactly supported smooth functions and $\D_v$ is elliptic on $\supp(\phi)$ (this is the setting considered in \cite{vdD20_Kasp_open}). 
\end{remark}

\section{The Kasparov product of half-closed modules}
\label{sec:Kasp_prod}

\begin{assumption}
\label{ass:Kasp_prod}
Let $A$ be a ($\Z_2$-graded) separable $C^*$-algebra, let $B$ and $C$ be ($\Z_2$-graded) $\sigma$-unital $C^*$-algebras, and let $\A\subset A$ and $\B\subset B$ be dense $*$-subalgebras. 
Consider three half-closed modules $(\A,{}_{\pi_1}(E_1)_B,\D_1)$, $(\B,{}_{\pi_2}(E_2)_C,\D_2)$, and $(\A,{}_\pi E_C,\D)$, where $E := E_1\hot_B E_2$ and $\pi = \pi_1\hot1$, and suppose that $\pi_1$ is essential. 
We assume that the $*$-subalgebra $\A\subset A$ contains an (even) almost idempotent approximate unit $\{u_n\}_{n\in\N}$ for $A$. 
\end{assumption}
For ease of notation, we will usually identify $u_n\equiv u_n\hot1 \equiv \pi(u_n\hot1)$ on $E$. 
We recall the `partition of unity' $\{\chi_k^2\}_{k\in\N}$ from \cref{defn:partition}. 

\begin{prop}
\label{prop:conn+bdd-loc-pos}
In the setting of \cref{ass:Kasp_prod}, assume that the connection condition (\cref{defn:connection_condition}) is satisfied. 
For each $k\in\N$, consider elements $v_k,w_k\in\{u_n\}$ with $v_ku_{k+1} = w_ku_{k+1} = u_{k+1}$, and write $\D_k := v_k \D v_k$ and $\D_{1,k} := w_k \D_1 w_k$. 
We assume furthermore that for some $0<\kappa<2$ the following condition is satisfied:
\begin{itemize}
\item for each $k\in\N$ there exists $\alpha_k\in(0,\infty)$ such that $\chi_k [ F_{\D_k} , F_{\alpha_k\D_{1,k}}\hot1 ] \chi_k + \kappa \chi_k^2$ is positive modulo compact operators. 
\end{itemize}
Then $(\A,{}_\pi E_C,\D)$ represents the Kasparov product of $(\A,{}_{\pi_1}(E_1)_B,\D_1)$ and $(\B,{}_{\pi_2}(E_2)_C,\D_2)$. 
\end{prop}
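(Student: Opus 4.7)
The plan is to apply the Connes–Skandalis theorem (\cref{thm:Connes-Skandalis}) to the bounded Kasparov modules provided by the localised representatives of \cref{defn:Higson}. Specifically, I would take
\[
\til F_\D := \sum_k \chi_k F_{\D_k} \chi_k, \qquad \til F_{\D_1} := \sum_k \chi_k F_{\alpha_k\D_{1,k}} \chi_k ,
\]
using the sequence $\{\alpha_k\}$ supplied by the hypothesis and trivial rescaling on the $\D$-side. By \cref{thm:local_rep_KK} these are bounded Kasparov modules representing $[F_\D]\in\KK(A,C)$ and $[F_{\D_1}]\in\KK(A,B)$, so it is enough to verify the connection and positivity conditions of \cref{thm:Connes-Skandalis} with $F:=\til F_\D$, $F_1:=\til F_{\D_1}\hot 1$, and $F_2$ any bounded-transform representative of $[\D_2]$.

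\textbf{Connection condition.} Fix a closed extension $\hat\D$ of $\D$. By \cref{prop:connection}, the assumed connection condition of \cref{defn:connection_condition} gives compactness of $[F_{\hat\D}\oplus F_{\D_2},\til T_\psi]$ for every $\psi\in\pi_1(A)\cdot E_1$. Since \cref{thm:local_rep_KK} ensures that $a(\til F_\D - F_{\hat\D})$ is compact for every $a\in A$, and since each entry of $\til T_\psi$ for $\psi=\pi_1(a)\xi$ carries $\pi(a)$ or $\pi(a^*)$ as a factor, replacing $F_{\hat\D}$ by $\til F_\D$ changes $[F_{\hat\D}\oplus F_{\D_2},\til T_\psi]$ only by a compact operator. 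Hence the Connes–Skandalis connection condition holds for $\til F_\D$.

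\textbf{Positivity condition.} The task is to establish, for each $a\in A$, the inequality
\[
\pi(a)\bigl[\til F_{\D_1}\hot 1,\,\til F_\D\bigr]\pi(a^*) + \kappa\,\pi(aa^*) \gtrsim 0 .
\]
Expanding the graded commutator as a double sum indexed by $(j,k)$, the terms with $|j-k|\geq 2$ vanish identically because $\chi_j\chi_k=0$ (\cref{lem:almost_idempotent}). Since $\|au_n-a\|\to 0$ and $u_n\chi_k=0$ for $k>n+1$, $\pi(a)\chi_k$ tends to zero in norm as $k\to\infty$; after sandwiching by $\pi(a)$ and $\pi(a^*)$, the remaining double sum is effectively finite up to arbitrary norm error, so all subsequent rearrangements are legitimate. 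Using $\chi_k^2\in\A\subset\Lip^*(\D)\cap\Lip^*(\D_1)$ together with \cref{lem:comm_cpt}, one moves the inner factors $\chi_k^2$ and $\chi_j^2$ past the bounded transforms $F_{\D_k}$ and $F_{\alpha_j\D_{1,j}}\hot 1$ at the cost of locally compact correction terms; the near-diagonal terms ($|j-k|=1$) collapse to compact operators and only the diagonal terms contribute. A short computation then reduces the diagonal contributions modulo compacts to
\[
\sum_k \chi_k\bigl[F_{\alpha_k\D_{1,k}}\hot 1,\,F_{\D_k}\bigr]\chi_k\cdot \chi_k^2 .
\]
Applying the hypothesis $\chi_k[F_{\D_k},F_{\alpha_k\D_{1,k}}\hot 1]\chi_k \gtrsim -\kappa\chi_k^2$ and using $0\leq \chi_k^4\leq \chi_k^2$ with $\sum_k\chi_k^2=\lim_n u_n$ in the strong topology, the sandwich by $\pi(a)$ yields the lower bound $-\kappa\,\pi(a)\bigl(\sum_k\chi_k^4\bigr)\pi(a^*)\geq -\kappa\,\pi(aa^*)$ modulo compact operators, which is the desired positivity.

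\textbf{Main obstacle.} The delicate part is the rearrangement of the near-diagonal terms and the passage from $\chi_k G_k\chi_k^2 H_k\chi_k$ to $\chi_k^3 G_k H_k\chi_k$ modulo compact operators. The difficulty is that $\chi_k=(u_k-u_{k-1})^{1/2}$ is only guaranteed to lie in $A$, not in $\A$, so \cref{lem:comm_cpt} cannot be applied directly to $\chi_k$. The workaround is to exploit $\chi_k^2\in\A$ together with the identity $\chi_k=u_{k+1}\chi_k$ from \cref{lem:almost_idempotent}, which allows every commutator involving $\chi_k$ to be replaced by a combination of commutators with elements of $\A$ plus bounded corrections, and then \cref{lem:comm_cpt} together with \cref{lem:cpt_res_localisation} can be invoked. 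This is the same device used in the proof of \cref{thm:local_rep_KK}, here applied on both localisation sides simultaneously.
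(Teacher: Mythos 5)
There is a genuine gap in the positivity argument, and it stems from a choice that differs from the paper's: you localise \emph{both} operators, taking $F:=\til F_\D=\sum_k\chi_kF_{\D_k}\chi_k$ as well as $F_1:=\til F_{\D_1}$, which produces a double sum with off-diagonal terms. The paper instead keeps $F=F_{\hat\D}$ global (so the connection condition is immediate from \cref{prop:connection}) and localises only $\D_1$; the commutator $u_n[F_\D,\til F_{\D_1}(\alpha)\hot1]u_n$ is then a single sum over $k$, and inside each term one first pulls $\chi_k$ out of the commutator using \cref{thm:Hilsum} and then replaces $F_\D$ by $F_{\D_k}$ using \cref{lem:local_comparison}, landing exactly on the hypothesis $\chi_k[F_{\D_k},F_{\alpha_k\D_{1,k}}\hot1]\chi_k\gtrsim-\kappa\chi_k^2$. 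No cross terms ever appear.

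In your version the $|j-k|=1$ terms do \emph{not} ``collapse to compact operators''. A term such as
\[
\chi_{k+1}\bigl(F_{\alpha_{k+1}\D_{1,k+1}}\hot1\bigr)\chi_{k+1}\chi_kF_{\D_k}\chi_k
+\chi_kF_{\D_k}\chi_k\chi_{k+1}\bigl(F_{\alpha_{k+1}\D_{1,k+1}}\hot1\bigr)\chi_{k+1}
\]
is, modulo compacts, $\chi_k\chi_{k+1}\,[F_{\alpha_{k+1}\D_{1,k+1}}\hot1,F_{\D_k}]\,\chi_{k+1}\chi_k$: an anticommutator of order-zero operators supported on the overlap of $\chi_k$ and $\chi_{k+1}$, which is generically non-compact (in the differential-operator picture it is again an order-zero operator). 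Worse, your hypothesis only controls the matched-index commutators $[F_{\D_k},F_{\alpha_k\D_{1,k}}\hot1]$, so it says nothing directly about $[F_{\alpha_{k+1}\D_{1,k+1}}\hot1,F_{\D_k}]$ — the independent rescalings $\alpha_{k+1}\neq\alpha_k$ are precisely what decouples neighbouring indices. Dropping these terms therefore loses a non-compact, sign-indefinite contribution, and the claimed reduction to $\sum_k\chi_k[F_{\alpha_k\D_{1,k}}\hot1,F_{\D_k}]\chi_k\cdot\chi_k^2$ is unjustified. The argument could in principle be repaired: one can transport the hypothesis to neighbouring indices using \cref{lem:local_comparison} and \cref{lem:bdd_transf_rescaled} (different localisations and different rescalings of the bounded transform agree modulo locally compact operators), conjugate the resulting inequality by $\chi_{k\pm1}$, and check that the accumulated lower bounds still sum to $-\kappa$ via $\sum_k\chi_k^2(\chi_{k-1}^2+\chi_k^2+\chi_{k+1}^2)\leq1$ — but none of this is in your write-up, and the far simpler route is not to localise $F_\D$ at all. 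Your identification of the $\chi_k\notin\A$ issue and the $\chi_k=u_{k+1}\chi_k$ workaround is correct, and the connection-condition paragraph is fine (though it becomes unnecessary once $F=F_{\hat\D}$ is kept global).
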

\begin{proof}
We can represent $\D$ and $\D_2$ by their bounded transforms $F_\D$ and $F_{\D_2}$, and (by \cref{thm:local_rep_KK}) we can represent $\D_1$ by a localised representative $\til F_{\D_1}(\alpha)$. 
We have seen in \cref{prop:connection} that the connection condition of \cref{thm:Connes-Skandalis} is satisfied. 
Thus it remains to show that $F_\D$ and $\til F_{\D_1}(\alpha)$ (for the given sequence $\{\alpha_k\}_{k\in\N}$) satisfy the positivity condition of \cref{thm:Connes-Skandalis}. 

To prove the positivity condition, it suffices to consider $u_n [ F_\D , \til F_{\D_1}(\alpha)\hot1 ] u_n$, since we have norm-convergence $a u_n \to a$. 
From \cref{lem:almost_idempotent} we know that $\sum_k \chi_k u_n$ is a finite sum. 
We know from \cref{lem:local_comparison} (applied with $a=\chi_k$, $c=u_{k+1}$, and $b=v_k$) that $\chi_k (F_\D - F_{\D_k}) \sim 0$. 
Using furthermore that $[F_\D,\chi_k] \sim 0$ by \cref{thm:Hilsum}, we have 
\begin{align*}
u_n [ F_\D , \til F_{\D_1}(\alpha)\hot1 ] u_n  
&= \sum_k u_n [ F_\D , \chi_k (F_{\alpha_k\D_{1,k}}\hot1) \chi_k ] u_n \\ 
&\stackrel{\ref{thm:Hilsum}}{\sim} \sum_k u_n \chi_k [ F_\D , F_{\alpha_k\D_{1,k}}\hot1 ] \chi_k u_n  \\
&\stackrel{\ref{lem:local_comparison}}{\sim} \sum_k u_n \chi_k [ F_{\D_k} , F_{\alpha_k\D_{1,k}}\hot1 ] \chi_k u_n  .
\end{align*}
By hypothesis, we have a sequence $\{\alpha_k\}_{k\in\N} \subset (0,\infty)$ such that $\chi_k [ F_{\D_k} , F_{\alpha_k\D_{1,k}}\hot1 ] \chi_k + \kappa \chi_k^2$ is positive modulo compact operators. We then conclude that 
\begin{align*}
u_n [ F_D , \til F_{\D_1}(\alpha)\hot1 ] u_n  
&\sim \sum_k u_n \chi_k [ F_{\D_k} , F_{\alpha_k\D_{1,k}}\hot1 ] \chi_k u_n \\
&\gtrsim - \sum_k \kappa u_n \chi_k^2 u_n  
= - \kappa u_n^2 .
\end{align*}
Thus the positivity condition of \cref{thm:Connes-Skandalis} also holds, and the statement follows. 
\end{proof}

\begin{thm}
\label{thm:Kucerovsky_half-closed_localised}
In the setting of \cref{ass:Kasp_prod}, assume that the connection condition (\cref{defn:connection_condition}) is satisfied. 
We assume furthermore that for each $n\in\N$ the following conditions are satisfied:
\begin{itemize}
\item we have the domain inclusion $\Dom(\D u_n) \cap \Ran(u_n) \subset \Dom(\D_1u_n\hot1)$; 
\item there exists $c_n\in[0,\infty)$ such that for all $\psi\in\Dom(\D u_n) \cap \Ran(u_n)$ we have 
\begin{align}
\label{eq:positivity_localised}
\big\la u_n(\D_1\hot1)\psi \bigmvert \D u_n\psi \big\ra + \big\la \D u_n\psi \bigmvert u_n(\D_1\hot1)\psi \big\ra 
\geq - c_n \big\la \psi \bigmvert (1+(u_n\D u_n)^2)^{\frac12} \psi \big\ra . 
\end{align}
\end{itemize}
Then $(\A,{}_\pi E_C,\D)$ represents the Kasparov product of $(\A,{}_{\pi_1}(E_1)_B,\D_1)$ and $(\B,{}_{\pi_2}(E_2)_C,\D_2)$. 
\end{thm}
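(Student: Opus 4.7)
The plan is to reduce the statement to \cref{prop:conn+bdd-loc-pos}: for each $k\in\N$ I need to produce $\alpha_k>0$ such that $\chi_k[F_{\D_k},F_{\alpha_k\D_{1,k}}\hot1]\chi_k + \kappa\chi_k^2$ is positive modulo compacts, where $\D_k = v_k\D v_k$ and $\D_{1,k} = w_k\D_1 w_k$ for suitable $v_k,w_k\in\{u_n\}$. The existence of $\alpha_k$ will be obtained by invoking \cref{prop:local_positivity_phi=1}. Concretely, I take $v_k := u_{k+3}$, $w_k := u_{k+4}$, $\rho := u_{k+2}$, $\chi := \chi_k$, $\phi := \Id$, and $\mS := \D_{1,k}\hot1$ (self-adjoint by \cref{lem:loc_sa}). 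Almost-idempotence $u_{n+1}u_n = u_n$---which, combined with the self-adjointness of each $u_n$, implies that the $u_n$ pairwise commute---then yields the algebraic relations $\rho\chi = \chi$, $v_k\rho = \rho$, $w_kv_k = v_k$, and $v_ku_{k+1} = w_ku_{k+1} = u_{k+1}$ required by both propositions.

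Next I would verify the structural conditions of \cref{ass:local_positivity}. The Lipschitz condition \ref{ass:loc_pos_Lip} follows from $\A\subset\Lip^*(\D)$ together with $[\D_{1,k},u_m] = w_k[\D_1,u_m]w_k$ (using commutativity of the $u_n$). The compact-resolvent condition \ref{ass:loc_pos_cpt_res} is obtained by inserting $u_{k+1}$: one writes $\chi_k(1+\D_k^2)^{-\frac12} = \chi_k\cdot u_{k+1}(1+\D_k^2)^{-\frac12}$ and applies \cref{lem:cpt_res_localisation} with $a = u_{k+1}$, $b = v_k$. The domain inclusion \ref{ass:loc_pos_Dom} follows from the theorem's first hypothesis at $n = k+3$, after unfolding $(\D_{1,k}\hot1)v_k = (w_k\hot1)(\D_1\hot1)(w_kv_k\hot1) = (w_k\hot1)(\D_1u_{k+3}\hot1)$ via $w_kv_k = v_k$.

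The heart of the argument is verifying the local positivity \ref{ass:loc_pos_condition}, i.e.\ translating the theorem's positivity (phrased with $\D_1\hot1$) into the form needed by \cref{prop:local_positivity_phi=1} (phrased with $\D_{1,k}\hot1$). For $\psi\in\Dom(\D v_k)\cap\Ran(v_k)$, the key observation is that $w_kv_k = v_k$ together with pairwise commutativity of the $u_n$ lets me rewrite $v_k(\D_{1,k}\hot1)\psi$ as $u_{k+3}(\D_1\hot1)u_{k+3}\psi$ modulo terms involving the bounded commutator $[u_{k+4},\D_1]\hot1$. Substituting into the quadratic form $\la\D v_k\psi\mvert v_k\mS\psi\ra + \text{c.c.}$ reduces the inequality to the theorem's second hypothesis at $n = k+3$; the correction terms from the commutator factor as products $X\cdot\D u_{k+3}\psi$ with $X$ a bounded operator of the form $u_m[\D_1,u_{k+4}]\hot1$, which are controlled by $c\la\psi\mvert(1+\D_k^2)^{\frac12}\psi\ra$ via \cref{lem:interpolation}.

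The main obstacle is the careful handling of domains in this rewriting: a priori $\psi$ need not lie in $\Dom(\D_1\hot1)$---only $u_{k+3}\psi$ does, by the theorem's hypothesis. I would resolve this by interpreting expressions like $u_n(\D_1\hot1)\psi$ uniformly as the densely-defined extension
\[
u_n(\D_1\hot1)\psi := (\D_1\hot1)u_n\psi + [u_n,\D_1\hot1]\psi,
\]
in which the bounded commutator term is defined on all of $E$. With this convention both sides of the theorem's positivity are meaningful on $\Dom(\D u_n)\cap\Ran(u_n)$, and a parallel interpretation of $v_k(\D_{1,k}\hot1)\psi$ on $\Dom(\D v_k)\cap\Ran(v_k)$ legitimises the rewriting above. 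Once \ref{ass:loc_pos_condition} is established, \cref{prop:local_positivity_phi=1} yields the required $\alpha_k$ for any fixed $0<\kappa<2$, and \cref{prop:conn+bdd-loc-pos} delivers the conclusion.
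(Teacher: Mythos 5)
Your proposal follows essentially the same route as the paper's proof: verify \cref{ass:local_positivity} with $\phi=\Id$ and suitably shifted localisers, invoke \cref{prop:local_positivity_phi=1} to produce the $\alpha_k$, and conclude via \cref{prop:conn+bdd-loc-pos}; the index shift ($v_k=u_{k+3}$, $w_k=u_{k+4}$, $\rho=u_{k+2}$ versus the paper's $u_{k+2}$, $u_{k+3}$, $u_{k+1}$) is immaterial. The one place where you make the argument harder than it needs to be is the verification of \ref{ass:loc_pos_condition}: no commutator corrections arise at all. For $\psi\in\Dom(\D v_k)\cap\Ran(v_k)$ one has $w_k\psi=\psi$ exactly (since $w_kv_k=v_k$), hence $v_k\mS_k\psi = v_kw_k(\D_1\hot1)w_k\psi = v_k(\D_1\hot1)\psi = u_{k+3}(\D_1\hot1)\psi$, so $Q_v(\psi)$ coincides \emph{literally} with the left-hand side of \eqref{eq:positivity_localised} at $n=k+3$, and $(1+(u_{k+3}\D u_{k+3})^2)^{\frac12}=(1+\D_k^2)^{\frac12}$ matches the right-hand side --- the target should be $u_{k+3}(\D_1\hot1)\psi$, not $u_{k+3}(\D_1\hot1)u_{k+3}\psi$. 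This matters beyond aesthetics: your proposed control of correction terms of the form $2\Re\la X\psi\mvert\D u_{k+3}\psi\ra$ by $c\,\la\psi\mvert(1+\D_k^2)^{\frac12}\psi\ra$ via \cref{lem:interpolation} is not straightforward, because $\D u_{k+3}$ is not obviously relatively bounded by $\D_k=u_{k+3}\D u_{k+3}$ (one only has $\lla\D_k\psi\rra\leq\lla\D u_{k+3}\psi\rra$, i.e.\ the wrong direction). Since the corrections vanish identically, this gap is avoidable rather than fatal. Your reading of $u_n(\D_1\hot1)\psi$ via the bounded extension of $[u_n,\D_1\hot1]$ is consistent with how the paper uses the hypothesis.
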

\begin{proof}
Let us write $v_k:=u_{k+2}$ and $w_k:=u_{k+3}$. 
For any $k\in\N$, we will check that \cref{ass:local_positivity} is satisfied by the operators $\D$, $\mS = \mS_k = \D_{1,k}\hot1 := w_k\D_1 w_k\hot1$, $\chi=\chi_k$, $\rho = u_{k+1}$, $\phi = \Id$, and $v=v_k$. We shall write $\D_k := \D_v = v_k \D v_k$. 

We have by assumption the domain inclusion $\Dom(\D v_k) \cap \Ran(v_k) \subset \Dom(\D_1v_k\hot1)$. Noting that $\Dom(\D_1v_k\hot1) = \Dom(\D_1w_k^2v_k\hot1) = \Dom(\mS_kv_k)$, we see that condition \ref{ass:loc_pos_Dom} is satisfied. 
Since $\phi=\Id$ and $\{u_n\}$ is almost idempotent, and using \cref{lem:almost_idempotent}, we see that condition \ref{ass:loc_pos_approx_unit} is satisfied. 
By assumption, we have $u_n\in\Lip^*(\D)\cap\Lip^*(\D_1\hot1)$, and since $\{u_n\}$ is commutative, it follows that we also have $u_n\in\Lip(\mS_k)$, so condition \ref{ass:loc_pos_Lip} is satisfied. 
Condition \ref{ass:loc_pos_cpt_res} follows from \cref{lem:cpt_res_localisation} (using that $\chi_k u_{k+1} = \chi_k$ and $u_{k+1} v_k = u_{k+1}$). 
Finally, for $\psi\in\Dom(\D v_k) \cap \Ran(v_k)$ we have $w_k\psi=\psi$, and then it follows from \cref{eq:positivity_localised} that 
\begin{align*}
Q_v(\psi) &= \big\la\D v_k\psi \bigmvert v_k\mS_k\psi \big\ra + \big\la v_k\mS_k\psi \bigmvert \D v_k\psi\big\ra \\
&= \big\la \D v_k\psi \bigmvert v_k(\D_1\hot1)\psi \big\ra + \big\la v_k(\D_1\hot1)\psi \bigmvert \D v_k\psi \big\ra \\
&\geq - c_{k+2} \big\la \psi \bigmvert (1+\D_k^2)^{\frac12}\psi \big\ra ,
\end{align*}
which shows condition \ref{ass:loc_pos_condition}. 
Thus \cref{ass:local_positivity} is indeed satisfied. 

Hence we can apply \cref{prop:local_positivity_phi=1} for each $k\in\N$, and for any $0<\kappa<2$ we obtain a sequence $\{\alpha_k\}_{k\in\N} \subset (0,\infty)$ such that $\chi_k [ F_{\D_k} , F_{\alpha_k\D_{1,k}}\hot1 ] \chi_k + \kappa \chi_k^2$ is positive modulo compact operators. 
The statement then follows from \cref{prop:conn+bdd-loc-pos}. 
\end{proof}

\begin{remark}
We note that \cref{eq:positivity_localised} may be replaced by 
\[
\big\la u_n(\D_1\hot1)u_n\psi \bigmvert u_n\D u_n\psi \big\ra + \big\la u_n\D u_n\psi \bigmvert u_n(\D_1\hot1)u_n\psi \big\ra 
\geq - c_n \big\la \psi \bigmvert (1+(u_n\D u_n)^2)^{\frac12} \psi \big\ra . 
\]
In this case the proof of \cref{thm:Kucerovsky_half-closed_localised} may be repeated, now choosing $w_k=u_{k+2}$ instead of $w_k=u_{k+3}$ (indeed, condition \ref{ass:loc_pos_condition} then follows from the above inequality, while for condition \ref{ass:loc_pos_Dom} we note that $\Dom(\D_1v_k\hot1) \subset \Dom(v_k\D_1v_k^2\hot1)$). 
\end{remark}

\begin{remark}
\label{remark:KS}
In the setting of \cref{thm:Kucerovsky_half-closed_localised}, suppose furthermore that $\D_1$ commutes with the approximate identity $u_n$. 
Then it follows from a standard argument that $\D_1$ is in fact self-adjoint. Indeed, if $[\D_1,u_n]=0$ then also $[\D_1^*,u_n]=0$. Using that $u_n\colon\Dom\D_1^*\to\Dom\D_1$, we then see for any $\xi\in\Dom\D_1^*$ that $\D_1 u_n\xi = u_n\D_1^*\xi$ converges to $\D_1^*\xi$, which proves that $\xi$ lies in the domain of $\D_1$. 
(For another argument for the self-adjointness of $\D_1$, see also \cite[Remark 6.5]{KS19}.) 

Furthermore, we note that $\{u_n\}_{n\in\N}$ is a \emph{localising subset} in the sense of \cite[Definition 6.2]{KS19}. 
The statement of \cref{thm:Kucerovsky_half-closed_localised} is then very similar to \cite[Theorem 6.10]{KS19}, which requires the following `local positivity condition' \cite[Definition 6.3]{KS19}:
\[
\big\la (\D_1\hot1)u_n\psi \bigmvert \D u_n\psi \big\ra + \big\la \D u_n\psi \bigmvert (\D_1\hot1)u_n\psi \big\ra \geq -c_n \la\psi|\psi\ra . 
\]
However, we note that our local positivity condition is a weaker assumption, since we allow for the additional factor $(1+(u_n\D u_n)^2)^{\frac12}$ in the inner product on the right-hand-side of the above inequality. 
Perhaps even more importantly, the major advantage of our proof of \cref{thm:Kucerovsky_half-closed_localised} is that we do \emph{not} require $u_n$ to commute with $\D_1$. This allows us to genuinely deal with the case of non-self-adjoint $\D_1$, whereas \cite{KS19} deals `essentially' only with the case where $\D_1$ is self-adjoint (as explained in \cite[Remark 6.5]{KS19}). 
\end{remark}

\subsection{Local Kucerovsky-type theorems}
\label{sec:local_Kucerovsky}

As already explained in the Introduction, we would like to replace the conditions in \cref{thm:Kucerovsky_half-closed_localised} by the following more natural condition (and in \S\ref{sec:strong_local_positivity} and \S\ref{sec:C2_local_positivity} we will consider two possible sufficient conditions which allow us to make this additional step). 

\begin{defn}
\label{defn:local_positivity_condition}
In the setting of \cref{ass:Kasp_prod}, 
the \emph{local positivity condition} requires that for each $n\in\N$ the following assumptions hold: 
\begin{enumerate}
\item we have the inclusion $u_n\cdot\Dom(\D) \subset \Dom(\D_1\hot1)$; 
\item there exists $c_n\in[0,\infty)$ such that for all $\psi\in\Dom(\D)$ we have 
\[
\big\la (\D_1\hot1)u_n\psi \bigmvert \D u_n\psi \big\ra + \big\la \D u_n\psi \bigmvert (\D_1\hot1)u_n\psi \big\ra \geq -c_n \big\la u_n\psi \bigmvert (1+\D^*\D)^{1/2} u_n\psi \big\ra . 
\] 
\end{enumerate}
\end{defn}

\begin{remark}
For the domain condition (1) in \cref{defn:local_positivity_condition}, it is of course sufficient (but not necessary!) to have the domain inclusion $\Dom(\D)\subset\Dom(\D_1\hot1)$. 
\end{remark}

\begin{lem}
\label{lem:localising_positivity}
Consider $\D_k := u_{k+4}\D u_{k+4}$ and $\mS_k := u_{k+4}\D_1u_{k+4}\hot1$, and $\psi\in\Dom\D$. Then we have 
\begin{align*}
&2\Re \big\la \D_k u_{k+2} \psi \bigmvert \mS_k u_{k+2} \psi \big\ra \\
&\quad= 2\Re \big\la \D u_{k+2} \psi \bigmvert (\D_1\hot1) u_{k+2} \psi \big\ra + 2\Re \big\la [u_{k+4}^2,\D] u_{k+2} \psi \bigmvert [\D_1\hot1,u_{k+3}] u_{k+2} \psi \big\ra \\
&\quad\geq 2\Re \big\la \D u_{k+2} \psi \bigmvert (\D_1\hot1) u_{k+2} \psi \big\ra - 2 \big\| [D,u_{k+4}^2] [\D_1\hot1,u_{k+3}] \big\| \, \big\la u_{k+2} \psi \bigmvert u_{k+2} \psi \big\ra . 
\end{align*}
\end{lem}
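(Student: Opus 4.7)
The plan is to exploit the almost-idempotent structure of the approximate unit. By \cref{lem:almost_idempotent} we have the three identities $u_{k+4}u_{k+2}=u_{k+2}$, $u_{k+3}u_{k+2}=u_{k+2}$, and $u_{k+4}u_{k+3}=u_{k+3}$, and essentially the entire lemma will follow from these together with the self-adjointness of each $u_n$ and the symmetry of $\D$ and $\D_1$.

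For the equality, I would first note that the outer $u_{k+4}$ in $\D_k$ and $\mS_k$ absorbs into $u_{k+2}$, giving $\D_k u_{k+2}\psi = u_{k+4}\D u_{k+2}\psi$ and $\mS_k u_{k+2}\psi = u_{k+4}(\D_1\hot1)u_{k+2}\psi$. Moving the self-adjoint $u_{k+4}$ to the other slot of the inner product, and then using $u_{k+4}^2 u_{k+2} = u_{k+2}$ to split $u_{k+4}^2 \D u_{k+2}\psi = \D u_{k+2}\psi + [u_{k+4}^2,\D]u_{k+2}\psi$, one obtains
\[
\la \D_k u_{k+2}\psi | \mS_k u_{k+2}\psi\ra = \la \D u_{k+2}\psi | (\D_1\hot1) u_{k+2}\psi\ra + \la [u_{k+4}^2,\D] u_{k+2}\psi | (\D_1\hot1) u_{k+2}\psi\ra .
\]
The key observation, which lets me replace the unbounded factor $(\D_1\hot1)u_{k+2}\psi$ by the bounded one $[\D_1\hot1, u_{k+3}]u_{k+2}\psi$, is the orthogonality identity $u_{k+3}[u_{k+4}^2,\D] u_{k+2}\psi = 0$: expanding the commutator and invoking $u_{k+3}u_{k+4}^2 = u_{k+3}$ together with $u_{k+4}^2 u_{k+2} = u_{k+2}$, the two resulting terms cancel. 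Combined with the decomposition $(\D_1\hot1)u_{k+2}\psi = u_{k+3}(\D_1\hot1)u_{k+2}\psi + [\D_1\hot1,u_{k+3}]u_{k+2}\psi$ (valid since $u_{k+3}u_{k+2} = u_{k+2}$) and the self-adjointness of $u_{k+3}$, this kills the $u_{k+3}(\D_1\hot1)u_{k+2}\psi$ contribution to the second inner product. Taking twice the real part then delivers the claimed equality.

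For the inequality, set $A := [u_{k+4}^2,\D]$ and $B := [\D_1\hot1, u_{k+3}]$, viewed as bounded extensions on $E$. A short computation along the lines of the remarks preceding \cref{defn:Hilsum} (using symmetry of $\D$ and $\D_1$ and self-adjointness of the $u_n$) shows $A^* = -A$ and $B^* = -B$. Hence
\[
2\Re\la A u_{k+2}\psi | B u_{k+2}\psi\ra = \la u_{k+2}\psi | (A^*B + B^*A) u_{k+2}\psi\ra = -\la u_{k+2}\psi | (AB + BA) u_{k+2}\psi\ra ,
\]
and since $(BA)^* = A^*B^* = AB$ gives $\|BA\| = \|AB\|$, the self-adjoint operator $AB + BA$ satisfies $\|AB + BA\| \leq 2\|AB\| = 2\|[\D,u_{k+4}^2][\D_1\hot1,u_{k+3}]\|$, which delivers the stated lower bound.

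The single conceptual step is the orthogonality identity $u_{k+3}[u_{k+4}^2,\D]u_{k+2}\psi = 0$: this is what encodes the two-layer cushion built into the indices $k+2, k+3, k+4$, and it is precisely what allows the error term to involve only the bounded commutator $[\D_1\hot1, u_{k+3}]$ rather than the potentially unbounded $(\D_1\hot1)u_{k+2}$. The rest is $C^*$-module bookkeeping.
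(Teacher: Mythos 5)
Your proof is correct and follows essentially the same route as the paper: both rest on the absorbing identities $u_{k+4}u_{k+2}=u_{k+2}$, $u_{k+3}u_{k+2}=u_{k+2}$, $u_{k+4}u_{k+3}=u_{k+3}$, with the only difference being that you split off the commutator $[u_{k+4}^2,\D]$ before $[\D_1\hot1,u_{k+3}]$ (hence need the orthogonality $u_{k+3}[u_{k+4}^2,\D]u_{k+2}\psi=0$ explicitly) whereas the paper does it in the opposite order so that the cross term never arises. Your skew-adjointness argument for the inequality, which the paper dismisses as ``clear'', is also correct and yields exactly the stated constant $2\|[\D,u_{k+4}^2][\D_1\hot1,u_{k+3}]\|$.
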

\begin{proof}
We compute
\begin{align*}
&\big\la \D_k u_{k+2} \psi \bigmvert \mS_k u_{k+2} \psi \big\ra 
= \big\la u_{k+4} \D u_{k+2} \psi \bigmvert u_{k+4} (\D_1\hot1) u_{k+3} u_{k+2} \psi \big\ra \\ 
&\quad= \big\la u_{k+4} \D u_{k+2} \psi \bigmvert u_{k+4} [\D_1\hot1,u_{k+3}] u_{k+2} \psi \big\ra + \big\la \D u_{k+2} \psi \bigmvert u_{k+3} (\D_1\hot1) u_{k+2} \psi \big\ra \\ 
&\quad= \big\la [u_{k+4}^2,\D] u_{k+2} \psi \bigmvert [\D_1\hot1,u_{k+3}] u_{k+2} \psi \big\ra + \big\la \D u_{k+2} \psi \bigmvert [\D_1\hot1,u_{k+3}] u_{k+2} \psi \big\ra \\
&\qquad+ \big\la \D u_{k+2} \psi \bigmvert u_{k+3} (\D_1\hot1) u_{k+2} \psi \big\ra \\ 
&\quad= \big\la \D u_{k+2} \psi \bigmvert (\D_1\hot1) u_{k+2} \psi \big\ra + \big\la [u_{k+4}^2,\D] u_{k+2} \psi \bigmvert [\D_1\hot1,u_{k+3}] u_{k+2} \psi \big\ra . 
\end{align*}
The inequality is then clear. 
\end{proof}

\subsubsection{Strong local positivity}
\label{sec:strong_local_positivity}

\begin{defn}
\label{defn:strong_positivity_condition}
In the setting of \cref{ass:Kasp_prod}, 
the \emph{strong local positivity condition} requires that for each $n\in\N$ the following assumptions hold: 
\begin{enumerate}
\item we have the inclusion $u_n\cdot\Dom(\D) \subset \Dom(\D_1\hot1)$; 
\item there exist $\nu_n\in(0,\infty)$ and $c_n\in[0,\infty)$ such that for all $\psi\in\Dom(\D)$ we have 
\begin{multline*}
\big\la (\D_1\hot1)u_n\psi \bigmvert \D u_n\psi \big\ra + \big\la \D u_n\psi \bigmvert (\D_1\hot1)u_n\psi \big\ra \\*
\geq \nu_n \big\lla (\D_1\hot1)u_n\psi \big\rra - c_n \big\la u_n\psi \bigmvert (1+\D^*\D)^{\frac12} u_n\psi \big\ra . 
\end{multline*}
\end{enumerate}
\end{defn}

\begin{lem}
\label{lem:strong_localised_positivity}
In the setting of \cref{ass:Kasp_prod}, assume that the strong local positivity condition is satisfied. 
Then writing $\mS_k := u_{k+4}\D_1 u_{k+4}\hot1$, $\D_k := u_{k+4}\D u_{k+4}$, and $\phi_k := u_{k+2}$, 
there exists for each $k\in\N$ a constant $d_k\in[0,\infty)$ such that for all $\psi\in\Dom(\D)$ we have 
\[
\big\la \mS_k \phi_k\psi \bigmvert \D_k \phi_k\psi \big\ra + \big\la \D_k \phi_k\psi \bigmvert \mS_k \phi_k\psi \big\ra 
\geq \nu_{k+2} \big\lla u_{k+4} \mS_k \phi_k\psi \big\rra - d_k \big\la \phi_k\psi \bigmvert (1+\D_k^2)^{1/2} \phi_k\psi \big\ra . 
\] 
\end{lem}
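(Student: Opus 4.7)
The plan is to combine \cref{lem:localising_positivity} and the strong local positivity condition to reduce the LHS of the inequality to the expected form, and then to convert the resulting $(1+\D^{*}\D)^{\frac12}$-term into the required $(1+\D_k^2)^{\frac12}$-term via a form-interpolation argument. First, for $\psi \in \Dom(\D)$, \cref{lem:localising_positivity} gives the estimate
\[
2\Re \la \D_k \phi_k \psi \mvert \mS_k \phi_k \psi \ra \geq 2\Re \la \D\phi_k \psi \mvert (\D_1\hot1) \phi_k \psi \ra - C_k' \lla \phi_k \psi \rra,
\]
with $C_k' := 2\|[\D,u_{k+4}^2][\D_1\hot1, u_{k+3}]\|$. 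Applying the strong local positivity condition at $n = k+2$ (noting that $u_{k+2}\psi = \phi_k\psi$) yields a lower bound on the right-hand side:
\[
2\Re \la (\D_1\hot1) \phi_k\psi \mvert \D \phi_k\psi \ra \geq \nu_{k+2} \lla (\D_1\hot1) \phi_k\psi\rra - c_{k+2} \la \phi_k\psi \mvert (1+\D^{*}\D)^{\frac12} \phi_k\psi \ra.
\]
Since $\{u_n\}$ is almost idempotent we have $u_{k+4}\mS_k \phi_k = u_{k+4}^2 (\D_1\hot1)\phi_k$, and the contractivity $\|u_{k+4}^2\| \leq 1$ gives $\lla u_{k+4}\mS_k \phi_k\psi\rra \leq \lla(\D_1\hot1) \phi_k\psi\rra$, so we may replace the latter by the former on the right-hand side.

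The key technical step is to bound $\la \phi_k\psi \mvert (1+\D^{*}\D)^{\frac12} \phi_k\psi \ra$ by a constant multiple of $\la \phi_k\psi \mvert (1+\D_k^2)^{\frac12} \phi_k\psi \ra$. My starting point is the identity $\D \phi_k\psi = \D_k \phi_k\psi - [u_{k+4},\D] \phi_k\psi$ (which follows from $u_{k+4}\phi_k = \phi_k$), giving the norm bound $\|\D \phi_k\psi\|^2 \leq 2\|\D_k \phi_k\psi\|^2 + 2\|[u_{k+4},\D]\|^2 \lla\phi_k\psi\rra$, and hence the order-$2$ form inequality $\la \phi_k\psi \mvert (1+\D^{*}\D) \phi_k\psi \ra \leq C_1 \la \phi_k\psi \mvert (1+\D_k^2) \phi_k\psi \ra$ for a suitable constant $C_1$. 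To upgrade this to the corresponding order-$1$ inequality I apply \cref{lem:interpolation} with $P := (1+\D_k^2)^{\frac12}$, which is regular, positive, self-adjoint, and invertible, and $T := \phi_k (1+\D^{*}\D)^{\frac12} \phi_k$, which is symmetric with $\Dom P \subset \Dom T$ (using $\phi_k \cdot \Dom\D_k \subset \Dom\D$). Boundedness of $TP^{-1}$ follows from the above norm bound together with $\|\D_k(1+\D_k^2)^{-\frac12}\| \leq 1$ and the boundedness of $[\D_k, \phi_k]$.

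Combining the previous three estimates with the trivial bound $\lla\phi_k\psi\rra \leq \la \phi_k\psi \mvert (1+\D_k^2)^{\frac12} \phi_k\psi \ra$ (since $(1+\D_k^2)^{\frac12}\geq 1$) then produces the desired inequality with $d_k := c_{k+2} C_k'' + C_k'$, where $C_k''$ is the constant obtained from the form-interpolation step. The main obstacle will be the interpolation step: a direct application of \cref{lem:interpolation} produces a right-hand side of the form $\la\psi|(1+\D_k^2)^{\frac12}\psi\ra$, so care is required to ensure that the $\phi_k$-factors end up on both sides of the inner product. I expect that this can be handled by applying Löwner--Heinz operator monotonicity of $x\mapsto x^{\frac12}$ to the Friedrichs extensions of the compressed positive symmetric forms $\eta \mapsto \la \phi_k\eta \mvert (1+\D^{*}\D) \phi_k\eta\ra$ and $\eta \mapsto \la \phi_k\eta \mvert (1+\D_k^2) \phi_k\eta\ra$.
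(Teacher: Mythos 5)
Your overall strategy coincides with the paper's: combine \cref{lem:localising_positivity} with the strong local positivity condition at $n=k+2$, replace $\lla(\D_1\hot1)\phi_k\psi\rra$ by $\lla u_{k+4}\mS_k\phi_k\psi\rra$, and convert the $(1+\D^*\D)^{\frac12}$-term into a $(1+\D_k^2)^{\frac12}$-term via \cref{lem:interpolation}. Your treatment of the first term is in fact simpler than the paper's (which expands a commutator and picks up an extra error term $\nu_{k+2}\|[\D_1\hot1,u_{k+4}^4][\D_1\hot1,u_{k+3}]\|$ in $d_k$): since $u_{k+4}\mS_k\phi_k = u_{k+4}^2(\D_1\hot1)\phi_k$ and $0\leq u_{k+4}^4\leq 1$, the bound $\lla u_{k+4}\mS_k\phi_k\psi\rra\leq\lla(\D_1\hot1)\phi_k\psi\rra$ is immediate and goes in the right direction. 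That part is fine.

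The gap is in the interpolation step, and your proposed repair does not close it. You correctly observe that applying \cref{lem:interpolation} to $T=\phi_k(1+\D^*\D)^{\frac12}\phi_k$ and $P=(1+\D_k^2)^{\frac12}$ and evaluating at $\psi$ puts $\phi_k$ only on the left-hand side; but the Löwner--Heinz/Friedrichs route you sketch to fix this fails for two reasons. First, Löwner--Heinz applied to the compressed forms compares $\big(\phi_k(1+\D^*\D)\phi_k\big)^{\frac12}$ with $\big(\phi_k(1+\D_k^2)\phi_k\big)^{\frac12}$, whereas the lemma requires a comparison of $\phi_k(1+\D^*\D)^{\frac12}\phi_k$ with $\phi_k(1+\D_k^2)^{\frac12}\phi_k$; the Hansen-type inequality relating $f(\phi A\phi)$ to $\phi f(A)\phi$ for $f(x)=x^{\frac12}$ reads $\big(\phi_k(1+\D_k^2)\phi_k\big)^{\frac12}\geq\phi_k(1+\D_k^2)^{\frac12}\phi_k$, which is the \emph{wrong} direction on the $\D_k$-side, so the chain of inequalities breaks. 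Second, Friedrichs extensions of positive symmetric operators need not exist as regular self-adjoint operators on a Hilbert $C^*$-module, so the objects you invoke are not available in this generality. The correct (and much simpler) fix is the one the paper uses: since $u_{k+4}^2\phi_k=\phi_k$, write
\begin{equation*}
\big\la \phi_k\psi \bigmvert (1+\D^*\D)^{\frac12}\phi_k\psi \big\ra = \big\la \phi_k\psi \bigmvert T\phi_k\psi \big\ra , \qquad T:=u_{k+4}^2(1+\D^*\D)^{\frac12}u_{k+4}^2 ,
\end{equation*}
note that $T$ is symmetric, defined on $\Dom(\D_k)$, and satisfies that $T(1+\D_k^2)^{-\frac12}$ is bounded (by your own order-two estimate), and then apply \cref{lem:interpolation} to this $T$ and evaluate the resulting operator inequality at the vector $\phi_k\psi$ itself. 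This yields $\big\la \phi_k\psi \bigmvert (1+\D^*\D)^{\frac12}\phi_k\psi \big\ra \leq \big\|T(1+\D_k^2)^{-\frac12}\big\|\,\big\la \phi_k\psi \bigmvert (1+\D_k^2)^{\frac12}\phi_k\psi \big\ra$ with the $\phi_k$-factors exactly where they are needed.
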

\begin{proof}
Combining \cref{lem:localising_positivity} with the strong local positivity condition, we have 
\begin{align}
\label{eq:localised_pos_strong}
2\Re \big\la \D_k \phi_k \psi \bigmvert \mS_k \phi_k \psi \big\ra 
&\geq 2\Re \big\la \D \phi_k \psi \bigmvert (\D_1\hot1) \phi_k \psi \big\ra - 2 \big\| [D,u_{k+4}^2] [\D_1\hot1,u_{k+3}] \big\| \, \big\lla \phi_k \psi \big\rra \nonumber\\ 
&\geq \nu_{k+2} \big\lla (\D_1\hot1)\phi_k\psi \big\rra 
- c_{k+2} \big\la \phi_k\psi \bigmvert (1+\D^*\D)^{\frac12} \phi_k\psi \big\ra \nonumber\\
&\quad- 2 \big\| [D,u_{k+4}^2] [\D_1\hot1,u_{k+3}] \big\| \, \big\lla \phi_k \psi \big\rra . 
\end{align}
The first term in the last expression can be estimated by 
\begin{align*}
\big\lla \D_1\hot1 \phi_k\psi \big\rra 
&= \big\la (\D_1\hot1) u_{k+4}^4 \phi_k\psi \bigmvert (\D_1\hot1) \phi_k\psi \big\ra \\
&= \big\la u_{k+4}^2 (\D_1\hot1) \phi_k\psi \bigmvert u_{k+4}^2 (\D_1\hot1) \phi_k\psi \big\ra \\
&\qquad+ \big\la [\D_1\hot1,u_{k+4}^4] \phi_k\psi \bigmvert (\D_1\hot1) u_{k+3} \phi_k\psi \big\ra \\
&= \big\lla u_{k+4} \mS_k \phi_k\psi \big\rra 
+ \big\la [\D_1\hot1,u_{k+4}^4] \phi_k\psi \bigmvert [\D_1\hot1,u_{k+3}] \phi_k\psi \big\rra \\
&\qquad+ \big\la [\D_1\hot1,u_{k+4}^4] \phi_k\psi \bigmvert u_{k+3} (\D_1\hot1) \phi_k\psi \big\rra \\
&\geq \big\lla u_{k+4} \mS_k \phi_k\psi \big\rra 
- \big\| [\D_1\hot1,u_{k+4}^4] [\D_1\hot1,u_{k+3}] \big\| \, \big\lla \phi_k\psi \big\rra ,
\end{align*}
where we used that $u_{k+3} [\D_1\hot1,u_{k+4}^4] \phi_k = 0$. 
The second term in \cref{eq:localised_pos_strong} can be estimated as follows. 
Since $u_{k+4}^2\colon\Dom(u_{k+4}\D u_{k+4})\to\Dom\D$, we know that the symmetric operator $T := u_{k+4}^2 (1+\D^*\D)^{\frac12} u_{k+4}^2$ is relatively bounded by $\D_k$. Using \cref{lem:interpolation}, we find that 
\begin{align}
\label{eq:bound_D-D_k}
\big\la \phi_k\psi \bigmvert (1+\D^*\D)^{1/2} \phi_k\psi \big\ra &= \big\la \phi_k\psi \bigmvert u_{k+4}^2 (1+\D^*\D)^{1/2} u_{k+4}^2 \phi_k\psi \big\ra \nonumber\\
&\leq \big\| T (1+\D_k^2)^{-\frac12} \big\| \; \big\la \phi_k\psi \bigmvert (1+\D_k^2)^{1/2} \phi_k\psi \big\ra . 
\end{align}
We then obtain the desired inequality with 
\begin{align*}
d_k &= \nu_{k+2} \big\| [\D_1\hot1,u_{k+4}^4] [\D_1\hot1,u_{k+3}] \big\| + c_{k+2} \big\| T (1+\D_k^2)^{-\frac12} \big\| \\
&\quad+ 2 \big\| [D,u_{k+4}^2] [\D_1\hot1,u_{k+3}] \big\| . 
\qedhere
\end{align*}
\end{proof}

\begin{thm}
\label{thm:Kucerovsky_half-closed_strong}
%
We consider the setting of \cref{ass:Kasp_prod}: let $A$ be a ($\Z_2$-graded) separable $C^*$-algebra, let $B$ and $C$ be ($\Z_2$-graded) $\sigma$-unital $C^*$-algebras, and let $\A\subset A$ and $\B\subset B$ be dense $*$-subalgebras. 
Consider three half-closed modules $(\A,{}_{\pi_1}(E_1)_B,\D_1)$, $(\B,{}_{\pi_2}(E_2)_C,\D_2)$, and $(\A,{}_\pi E_C,\D)$, where $E := E_1\hot_B E_2$ and $\pi = \pi_1\hot1$, and suppose that $\pi_1$ is essential. 
We assume that the $*$-subalgebra $\A\subset A$ contains an (even) almost idempotent approximate unit $\{u_n\}_{n\in\N}$ for $A$. 
We assume furthermore that the following conditions are satisfied: 
\begin{description}
\item[Connection condition \textnormal{(\cref{defn:connection_condition})}:] 
for all $\psi$ in a dense subspace $\E_1$ of $\Dom\D_1$, we have 
\[
\til T_\psi := \mattwo{0}{T_\psi}{T_\psi^*}{0} \in \Lip(\D\oplus\D_2) ; 
\]
\item[Strong local positivity condition \textnormal{(\cref{defn:strong_positivity_condition})}:] 
for each $n\in\N$ the following assumptions hold: 
\begin{enumerate}
\item we have the inclusion $u_n\cdot\Dom(\D) \subset \Dom(\D_1\hot1)$; 
\item there exist $\nu_n\in(0,\infty)$ and $c_n\in[0,\infty)$ such that for all $\psi\in\Dom(\D)$ we have 
\begin{multline*}
\big\la (\D_1\hot1)u_n\psi \bigmvert \D u_n\psi \big\ra + \big\la \D u_n\psi \bigmvert (\D_1\hot1)u_n\psi \big\ra \\*
\geq \nu_n \big\lla (\D_1\hot1)u_n\psi \big\rra - c_n \big\la u_n\psi \bigmvert (1+\D^*\D)^{\frac12} u_n\psi \big\ra . 
\end{multline*}
\end{enumerate}
\end{description}
Then $(\A,{}_\pi E_C,\D)$ represents the Kasparov product of $(\A,{}_{\pi_1}(E_1)_B,\D_1)$ and $(\B,{}_{\pi_2}(E_2)_C,\D_2)$. 
\end{thm}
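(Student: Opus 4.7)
The plan is to reduce the theorem to \cref{prop:conn+bdd-loc-pos} by constructing, for each $k \in \N$ and some fixed $0 < \kappa < 2$, a positive number $\alpha_k$ such that
\[
\chi_k [F_{\D_k}, F_{\alpha_k \D_{1,k}} \hot 1] \chi_k + \kappa \chi_k^2
\]
is positive modulo compact operators, where $\D_k = v_k \D v_k$ and $\D_{1,k} = w_k \D_1 w_k$ for a suitable choice of $v_k, w_k \in \{u_n\}$. The connection condition is in hand by hypothesis, so this is the only remaining input. I would choose $v_k = w_k = u_{k+4}$; the indexing shift is what allows \cref{lem:strong_localised_positivity} to be applied later.

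For each $k$, the positivity above will follow from \cref{prop:local_positivity_strong} with the choices $\chi = \chi_k$, $\rho = u_{k+1}$, $\phi = \phi_k = u_{k+2}$, $v = u_{k+4}$, and $\mS = \mS_k := u_{k+4}(\D_1\hot 1)u_{k+4}$. The algebraic relations in condition \ref{ass:loc_pos_approx_unit} follow from the almost idempotence of $\{u_n\}$ together with \cref{lem:almost_idempotent}; condition \ref{ass:loc_pos_Lip} follows from $\A \subset \Lip^*(\D)\cap \Lip^*(\D_1\hot 1)$ combined with the commutativity of the $u_n$, which forces $u_n \in \Lip(\mS_k)$; and condition \ref{ass:loc_pos_cpt_res} is an immediate application of \cref{lem:cpt_res_localisation} using $\chi_k u_{k+1} = \chi_k$ and $u_{k+1} u_{k+4} = u_{k+1}$. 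For the domain condition \ref{ass:loc_pos_Dom}, any $\psi \in \Dom(\D u_{k+4}) \cap \Ran(u_{k+4})$ satisfies $u_{k+4}^2 \psi = u_{k+4} \cdot (u_{k+4}\psi) \in u_{k+4} \cdot \Dom(\D) \subset \Dom(\D_1 \hot 1)$ by hypothesis (1) of \cref{defn:strong_positivity_condition}, which is equivalent to $\psi \in \Dom(\mS_k u_{k+4})$.

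The crucial step is verifying the strengthened positivity condition \ref{ass:strong_loc_pos_condition}, and this is precisely the content of \cref{lem:strong_localised_positivity}: since $v = u_{k+4}$ commutes with $\phi_k$ and is self-adjoint, one has $\la \D_k \phi_k\psi \mvert \mS_k \phi_k\psi\ra = \la \D u_{k+4} \phi_k\psi \mvert u_{k+4}\mS_k \phi_k\psi\ra$, so the inequality produced by the lemma (with $\nu = \nu_{k+2}$ and $c = d_k$) is exactly condition \ref{ass:strong_loc_pos_condition} for the current data. With all hypotheses verified, \cref{prop:local_positivity_strong} produces the desired $\alpha_k$ for each $k$, and then \cref{prop:conn+bdd-loc-pos} concludes that $(\A,{}_\pi E_C, \D)$ represents the Kasparov product.

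The main obstacle — bridging between the hypothesis, which is phrased in terms of $\D$ and $\D_1\hot 1$, and the final application of \cref{prop:local_positivity_strong}, which requires a strong positivity statement relative to the \emph{localised} self-adjoint operators $\D_k, \mS_k$ — has already been encapsulated in \cref{lem:localising_positivity,lem:strong_localised_positivity}. The key analytic tools behind those lemmas are the boundedness of $[\D, u_n^2]$ and $[\D_1\hot 1, u_n]$ (furnished by $\A \subset \Lip^*(\D) \cap \Lip^*(\D_1\hot 1)$) together with \cref{lem:interpolation}, which controls $u_{k+4}^2(1+\D^*\D)^{1/2}u_{k+4}^2$ by a multiple of $(1+\D_k^2)^{1/2}$ and thereby converts the error term into the form demanded by \cref{ass:local_positivity}. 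Once this conversion is accepted, the rest is bookkeeping among the indices in the almost idempotent approximate unit.
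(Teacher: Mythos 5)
Your proposal is correct and follows essentially the same route as the paper: choose $v_k=w_k=u_{k+4}$, verify \cref{ass:local_positivity} with $\rho=u_{k+1}$, $\phi=u_{k+2}$, $\mS=\mS_k=u_{k+4}(\D_1\hot1)u_{k+4}$, invoke \cref{lem:strong_localised_positivity} for condition \ref{ass:strong_loc_pos_condition}, and conclude via \cref{prop:local_positivity_strong} and \cref{prop:conn+bdd-loc-pos}. The individual verifications (domain condition, Lipschitz conditions, compact resolvent, and the identification of $Q_v(\phi_k\psi)$ with the anticommutator form of $\D_k$ and $\mS_k$) all match the paper's argument.
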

\begin{proof}
Let us write $v_k:=w_k:=u_{k+4}$. 
For any $k\in\N$, we will check that \cref{ass:local_positivity} is satisfied by the operators $\D$, $\mS = \mS_k = \D_{1,k}\hot1 := w_k\D_1 w_k\hot1$, $\chi=\chi_k$, $\rho = u_{k+1}$, $\phi = u_{k+2}$, and $v=v_k$. We shall write $\D_k := \D_v = v_k \D v_k$. 

We have by assumption the domain inclusion $u_n\cdot\Dom(\D) \subset \Dom(\D_1\hot1)$ for any $n\in\N$. For any $\psi\in\Dom(\D v)$ we note that $v\psi = u_{k+5} v\psi \in u_{k+5}\cdot\Dom(\D) \subset \Dom(\D_1\hot1)$, and therefore $\psi\in\Dom(\mS_k v)$, which shows condition \ref{ass:loc_pos_Dom}. 
Conditions \ref{ass:loc_pos_approx_unit}-\ref{ass:loc_pos_cpt_res} follow as in the proof of \cref{thm:Kucerovsky_half-closed_localised}. 
Furthermore, for any $\psi\in\Dom(\D v)$ we have $\phi\psi=\phi v\psi$, where $v\psi\in\Dom\D$. 
Condition \ref{ass:strong_loc_pos_condition} is then satisfied by \cref{lem:strong_localised_positivity}. 
So by \cref{prop:local_positivity_strong} we know that there exists a sequence $\{\alpha_k\}_{k\in\N} \subset (0,\infty)$ such that $\chi_k [ F_{\D_k} , F_{\alpha_k\D_{1,k}}\hot1 ] \chi_k + \kappa \chi_k^2$ is positive modulo compact operators. 
The statement then follows from \cref{prop:conn+bdd-loc-pos}. 
\end{proof}

\subsubsection{A `differentiability' condition}
\label{sec:C2_local_positivity}

\begin{lem}
\label{lem:localised_positivity_C2}
In the setting of \cref{ass:Kasp_prod}, assume that the local positivity condition is satisfied. 
Then writing $\mS_k := u_{k+4}\D_1 u_{k+4}\hot1$, $\D_k := u_{k+4}\D u_{k+4}$, and $\phi_k := u_{k+2}$, 
there exists for each $k\in\N$ a constant $d_k\in[0,\infty)$ such that for all $\psi\in\Dom(\D)$ we have 
\[
\big\la \mS_k \phi_k\psi \bigmvert \D_k \phi_k\psi \big\ra + \big\la \D_k \phi_k\psi \bigmvert \mS_k \phi_k\psi \big\ra \geq -d_k \big\la \phi_k\psi \bigmvert (1+\D_k^2)^{1/2} \phi_k\psi \big\ra . 
\] 
\end{lem}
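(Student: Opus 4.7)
The plan is to mimic the proof of \cref{lem:strong_localised_positivity} almost verbatim, only omitting the step that produced the extra $\nu_{k+2}\lla(\D_1\hot1)\phi_k\psi\rra$ term; this is the only place in that proof where the \emph{strong} local positivity was genuinely needed, and without it the argument collapses to a shorter form.

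First, I would apply \cref{lem:localising_positivity} to the left-hand side of the desired inequality (with $\phi_k = u_{k+2}$), yielding
\[
2\Re\la \D_k\phi_k\psi \mvert \mS_k\phi_k\psi\ra \geq 2\Re\la \D\phi_k\psi \mvert (\D_1\hot1)\phi_k\psi\ra - 2\big\|[\D,u_{k+4}^2][\D_1\hot1,u_{k+3}]\big\|\cdot\lla\phi_k\psi\rra .
\]
Each commutator factor on the right is bounded since $u_{k+3},u_{k+4}\in\A\subset\Lip^*(\D)\cap\Lip^*(\D_1\hot1)$. Next, I would invoke the local positivity condition (\cref{defn:local_positivity_condition}) at level $n=k+2$ applied to the vector $\psi\in\Dom\D$, which gives
\[
2\Re\la \D\phi_k\psi \mvert (\D_1\hot1)\phi_k\psi\ra \geq -c_{k+2}\la\phi_k\psi \mvert (1+\D^*\D)^{1/2}\phi_k\psi\ra .
\]

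The two remaining tasks are: (i) to replace $(1+\D^*\D)^{1/2}$ on the right by $(1+\D_k^2)^{1/2}$, and (ii) to absorb the $\lla\phi_k\psi\rra$ contribution. Task (ii) is immediate, since $(1+\D_k^2)^{1/2}\geq 1$ gives $\lla\phi_k\psi\rra \leq \la\phi_k\psi \mvert (1+\D_k^2)^{1/2}\phi_k\psi\ra$. For task (i), I would reuse the interpolation argument already carried out in \cref{eq:bound_D-D_k}: the symmetric operator $T := u_{k+4}^2(1+\D^*\D)^{1/2}u_{k+4}^2$ is relatively bounded by $\D_k$ (since $u_{k+4}^2$ sends $\Dom\D_k$ into $\Dom\D$ because $u_{k+4}\in\Lip^*(\D)$), and then \cref{lem:interpolation} gives
\[
\la\phi_k\psi \mvert (1+\D^*\D)^{1/2}\phi_k\psi\ra \leq \big\|T(1+\D_k^2)^{-1/2}\big\|\cdot\la\phi_k\psi \mvert (1+\D_k^2)^{1/2}\phi_k\psi\ra .
\]

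Collecting the constants, one then takes
\[
d_k := c_{k+2}\big\|T(1+\D_k^2)^{-1/2}\big\| + 2\big\|[\D,u_{k+4}^2][\D_1\hot1,u_{k+3}]\big\| ,
\]
and combining the above inequalities yields the claim. There is no real obstacle here: all of the delicate technicalities (boundedness of $T(1+\D_k^2)^{-1/2}$, domain matters for $u_{k+4}^2$ acting on $\Dom\D_k$, boundedness of the crossed commutator) were already dispatched in the proof of \cref{lem:strong_localised_positivity}, and the present lemma is precisely what falls out of that argument once the term controlled by $\nu_n$ is simply discarded rather than estimated further.
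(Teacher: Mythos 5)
Your proof is correct and follows essentially the same route as the paper: combine \cref{lem:localising_positivity} with the local positivity condition at level $n=k+2$, then convert $(1+\D^*\D)^{1/2}$ into $(1+\D_k^2)^{1/2}$ via the interpolation estimate with $T = u_{k+4}^2(1+\D^*\D)^{1/2}u_{k+4}^2$, arriving at the identical constant $d_k$. (Your citation of \cref{lem:interpolation} for that last step is in fact the appropriate one; the paper's printed reference to \cref{lem:integral_estimate_first-order2} at the corresponding point appears to be a slip, as the argument is the one from \cref{eq:bound_D-D_k}.)
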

\begin{proof}
Combining \cref{lem:localising_positivity} with the local positivity condition, we have 
\begin{align*}
&2\Re \big\la \D_k \phi_k \psi \bigmvert \mS_k \phi_k \psi \big\ra \\
&\quad\geq 2\Re \big\la \D \phi_k \psi \bigmvert (\D_1\hot1) \phi_k \psi \big\ra - 2 \big\| [D,u_{k+4}^2] [\D_1\hot1,u_{k+3}] \big\| \, \big\la \phi_k \psi \bigmvert \phi_k \psi \big\ra \\ 
&\quad\geq - c_{k+2} \big\la \phi_k\psi \bigmvert (1+\D^*\D)^{\frac12} \phi_k\psi \big\ra 
- 2 \big\| [D,u_{k+4}^2] [\D_1\hot1,u_{k+3}] \big\| \, \big\la \phi_k \psi \bigmvert \phi_k \psi \big\ra . 
\end{align*}
As in \cref{eq:bound_D-D_k}, an application of \cref{lem:integral_estimate_first-order2} with $T := u_{k+4}^2 (1+\D^*\D)^{\frac12} u_{k+4}^2$ yields 
\begin{align*}
\big\la \phi_k\psi \bigmvert (1+\D^*\D)^{1/2} \phi_k\psi \big\ra 
&\leq \big\| T (1+\D_k^2)^{-\frac12} \big\| \; \big\la \phi_k\psi \bigmvert (1+\D_k^2)^{1/2} \phi_k\psi \big\ra . 
\end{align*}
Thus we obtain the desired inequality with 
\[
d_k = c_{k+2} \big\| T (1+\D_k^2)^{-\frac12} \big\| + 2 \big\| [D,u_{k+4}^2] [\D_1\hot1,u_{k+3}] \big\| . 
\qedhere
\]
\end{proof}

\begin{thm}
\label{thm:Kucerovsky_half-closed_C2}
%
We consider the setting of \cref{ass:Kasp_prod}: let $A$ be a ($\Z_2$-graded) separable $C^*$-algebra, let $B$ and $C$ be ($\Z_2$-graded) $\sigma$-unital $C^*$-algebras, and let $\A\subset A$ and $\B\subset B$ be dense $*$-subalgebras. 
Consider three half-closed modules $(\A,{}_{\pi_1}(E_1)_B,\D_1)$, $(\B,{}_{\pi_2}(E_2)_C,\D_2)$, and $(\A,{}_\pi E_C,\D)$, where $E := E_1\hot_B E_2$ and $\pi = \pi_1\hot1$, and suppose that $\pi_1$ is essential. 
We assume that the $*$-subalgebra $\A\subset A$ contains an (even) almost idempotent approximate unit $\{u_n\}_{n\in\N}$ for $A$. 
We assume furthermore that the following conditions are satisfied: 
\begin{description}
\item[Connection condition \textnormal{(\cref{defn:connection_condition})}:] 
for all $\psi$ in a dense subspace $\E_1$ of $\Dom\D_1$, we have 
\[
\til T_\psi := \mattwo{0}{T_\psi}{T_\psi^*}{0} \in \Lip(\D\oplus\D_2) ; 
\]
\item[Local positivity condition \textnormal{(\cref{defn:local_positivity_condition})}:] 
for each $n\in\N$ the following assumptions hold: 
\begin{enumerate}
\item we have the inclusion $u_n\cdot\Dom(\D) \subset \Dom(\D_1\hot1)$; 
\item there exists $c_n\in[0,\infty)$ such that for all $\psi\in\Dom(\D)$ we have 
\[
\big\la (\D_1\hot1)u_n\psi \bigmvert \D u_n\psi \big\ra + \big\la \D u_n\psi \bigmvert (\D_1\hot1)u_n\psi \big\ra \geq -c_n \big\la u_n\psi \bigmvert (1+\D^*\D)^{1/2} u_n\psi \big\ra . 
\] 
\end{enumerate}
\end{description}
Finally, assume that the following additional condition holds:
\begin{enumerate}[label=\textnormal{($\star$)}]
\item \label{item:differentiability}
for each $n\in\N$, $u_n[\D,u_n]u_{n+2}$ maps $\Dom(\D u_{n+2}^2)$ to $\Dom(\D_1\hot1)$. 
\end{enumerate}
Then $(\A,{}_\pi E_C,\D)$ represents the Kasparov product of $(\A,{}_{\pi_1}(E_1)_B,\D_1)$ and $(\B,{}_{\pi_2}(E_2)_C,\D_2)$. 
\end{thm}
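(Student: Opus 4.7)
The plan is to mimic the proof of \cref{thm:Kucerovsky_half-closed_strong}, substituting \cref{prop:local_positivity_C2} for \cref{prop:local_positivity_strong}; the role of the new hypothesis \ref{item:differentiability} will be precisely to supply the extra ingredient \ref{ass:weak_C2} required by \cref{prop:local_positivity_C2}. Concretely, fix $k\in\N$, set $v_k = w_k := u_{k+4}$, and verify \cref{ass:local_positivity} together with \ref{ass:weak_C2} for the data $\D$, $\mS := \mS_k = w_k\D_1 w_k\hot 1$, $\chi := \chi_k$, $\rho := u_{k+1}$, $\phi := u_{k+2}$, $v := v_k$, and $\D_v = \D_k := v_k\D v_k$.

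Conditions \ref{ass:loc_pos_Dom}--\ref{ass:loc_pos_cpt_res} can be checked exactly as in the proof of \cref{thm:Kucerovsky_half-closed_strong}: the commutativity and almost-idempotency of $\{u_n\}$, the inclusion $\A\subset\Lip^*(\D)\cap\Lip^*(\D_1\hot 1)$, and \cref{lem:cpt_res_localisation} dispose of all of them without any appeal to \ref{item:differentiability}. Condition \ref{ass:loc_pos_condition} is supplied by \cref{lem:localised_positivity_C2}: though that lemma is stated for $\psi\in\Dom\D$, any $\psi\in\Dom(\D v_k)\cap\Ran(v_k)$ satisfies $\phi\psi = \phi v_k\psi$ (because $\phi v_k = u_{k+2}u_{k+4} = u_{k+2} = \phi$) with $v_k\psi\in\Dom\D$, so the lemma applied to $v_k\psi$ yields the required bound.

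The main obstacle, and the only place where hypothesis \ref{item:differentiability} is genuinely used, is the verification of \ref{ass:weak_C2}. The inclusion $\phi\cdot\Dom\D_v\subset\Dom\mS$ is routine: for $\psi\in\Dom\D_v$ one has $u_{k+4}\psi\in\Dom\D$, and since $u_{k+4}\in\Lip^*(\D)$ also $u_{k+4}^2\psi\in\Dom\D$; hence $\phi\psi = u_{k+2}u_{k+4}^2\psi\in u_{k+2}\cdot\Dom\D\subset\Dom(\D_1\hot 1)\subset\Dom\mS$ by the domain part of the local positivity condition. For the crucial second inclusion $\phi[\D_v,\phi]\cdot\Dom\D_v\subset\Dom\mS$, the commutativity of $\{u_n\}$ yields on $\Dom\D_v$ the identity
\[
\phi[\D_v,\phi] = u_{k+2}[u_{k+4}\D u_{k+4},u_{k+2}] = u_{k+2}u_{k+4}[\D,u_{k+2}]u_{k+4} = u_{k+2}[\D,u_{k+2}]u_{k+4} ,
\]
where in the last step we used $u_{k+2}u_{k+4} = u_{k+2}$. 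Since $u_{k+4}\in\Lip^*(\D)$ implies $\Dom\D_v\subset\Dom(\D u_{k+4}^2)$, applying \ref{item:differentiability} with $n=k+2$ places $\phi[\D_v,\phi]\psi\in\Dom(\D_1\hot 1)\subset\Dom\mS$ for every $\psi\in\Dom\D_v$, as required.

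With \cref{ass:local_positivity} and \ref{ass:weak_C2} both verified, \cref{prop:local_positivity_C2} provides, for any prescribed $0<\kappa<2$, a rescaling $\alpha_k\in(0,\infty)$ such that $\chi_k[F_{\D_k},F_{\alpha_k\D_{1,k}}\hot 1]\chi_k\gtrsim -\kappa\chi_k^2$, and \cref{prop:conn+bdd-loc-pos} then concludes that $(\A,{}_\pi E_C,\D)$ represents the Kasparov product of $(\A,{}_{\pi_1}(E_1)_B,\D_1)$ and $(\B,{}_{\pi_2}(E_2)_C,\D_2)$.
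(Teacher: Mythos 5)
Your proof follows the paper's argument essentially verbatim: the same choice $v_k=w_k=u_{k+4}$, the verification of \cref{ass:local_positivity} with condition \ref{ass:loc_pos_condition} supplied by \cref{lem:localised_positivity_C2}, and the identification $\phi_k[\D_k,\phi_k]=u_{k+2}[\D,u_{k+2}]u_{k+4}$ feeding hypothesis ($\star$) into condition \ref{ass:weak_C2}, before concluding via \cref{prop:local_positivity_C2} and \cref{prop:conn+bdd-loc-pos}. The only blemish is the intermediate claim that $\psi\in\Dom\D_v$ implies $u_{k+4}\psi\in\Dom\D$ (the closure argument only yields $u_{k+4}^2\psi\in\Dom\D$), but this is harmless since your conclusion $\phi\psi=u_{k+2}u_{k+4}^2\psi\in\Dom(\D_1\hot1)$ relies only on the correct inclusion $\Dom\D_v\subset\Dom(\D u_{k+4}^2)$, which you also state.
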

\begin{proof}
Most of the proof is similar to the proof of \cref{thm:Kucerovsky_half-closed_strong}, so we shall be brief here. 
Conditions \ref{ass:loc_pos_Dom}-\ref{ass:loc_pos_cpt_res} in \cref{ass:local_positivity} are again satisfied by the operators $\D$, $\mS = \D_{1,k}\hot1 := w_k\D_1 w_k\hot1$, $\chi=\chi_k$, $\rho = u_{k+1}$, $\phi = u_{k+2}$, and $v=v_k$, where $v_k=w_k=u_{k+4}$. 
Condition \ref{ass:loc_pos_condition} is shown in \cref{lem:localised_positivity_C2}. Thus \cref{ass:local_positivity} is indeed satisfied. 
Furthermore, since $\phi_k [\D_k,\phi_k] = u_{k+2} [\D,u_{k+2}] u_{k+4}$ maps $\Dom(\D_k)$ to $\Dom(\D_1\hot1)\subset\Dom(\mS_k)$ by condition \ref{item:differentiability}, also condition \ref{ass:weak_C2} is satisfied. 
Hence we can apply \cref{prop:local_positivity_C2} for each $k\in\N$, and we obtain a sequence $\{\alpha_k\}_{k\in\N} \subset (0,\infty)$ such that $\chi_k [ F_{\D_k} , F_{\alpha_k\D_{1,k}}\hot1 ] \chi_k + \kappa \chi_k^2$ is positive modulo compact operators. 
The statement then follows from \cref{prop:conn+bdd-loc-pos}. 
\end{proof}

\begin{remark}
If for each $n\in\N$, $u_n[\D,u_n]$ maps $\Dom(\D)$ to $\Dom(\D_1\hot1)$, and $u_{n+2}$ commutes with $[\D,u_n]$, then one can check that the condition \ref{item:differentiability} is satisfied. 
This situation for instance occurs if $\D$ and $\D_1$ are first-order differential operators on smooth manifolds (with $\D$ elliptic) and $u_n$ are compactly supported smooth functions. This is precisely the setting studied in \cite{vdD20_Kasp_open}, and \cref{thm:Kucerovsky_half-closed_C2} can be viewed as a generalisation of the statement and proof of \cite[Theorem 4.2]{vdD20_Kasp_open}. 
\end{remark}

\subsection{The constructive approach}
\label{sec:construction}

In this subsection we show that the strong local positivity condition is quite naturally satisfied in (a localised version of) the constructive approach to the unbounded Kasparov product. 
In the following, we will in particular assume the existence of a suitable operator $\mT$. 
As explained in \cref{remark:construct_T}, one should keep in mind here the case where (under suitable assumptions) $\mT = 1\hot_\nabla\D_2$ is constructed from a suitable `connection' $\nabla$ on $E_1$. 

\begin{assumption}
\label{ass:construction}
Let $A$ be a ($\Z_2$-graded) separable $C^*$-algebra, let $B$ and $C$ be ($\Z_2$-graded) $\sigma$-unital $C^*$-algebras, and let $\A\subset A$ and $\B\subset B$ be dense $*$-subalgebras. 
Consider two half-closed modules $(\A,{}_{\pi_1}(E_1)_B,\D_1)$ and $(\B,{}_{\pi_2}(E_2)_C,\D_2)$, and suppose that $\pi_1$ is essential. 
We write $E := E_1\hot_B E_2$, $\pi = \pi_1\hot1$, and $\mS := \D_1\hot1$, and we consider an odd symmetric operator $\mT$ on $E$. 
We denote by $\D := \bar{\mS+\mT}$ the closure of the operator $\mS+\mT$ on the initial domain $\Dom(\mS)\cap\Dom(\mT)$. 
We assume that the following conditions are satisfied:
\begin{enumerate}[label={(A\arabic*)}]
\item \label{item:S+T_half-closed}
the intersection $\Dom(\mS)\cap\Dom(\mT)$ is dense in $E$, and 
the triple $(\A,{}_\pi E_C,\D)$ is a half-closed module;
\item \label{item:T_connection}
for all $\psi$ in a dense subset of $\A\cdot\Dom\D_1$, we have 
\[
\til T_\psi := \mattwo{0}{T_\psi}{T_\psi^*}{0} \in \Lip(\mT\oplus\D_2) ; 
\]
\item \label{item:loc_dom_incl}
the $*$-subalgebra $\A\subset A$ contains an almost idempotent approximate unit $\{u_n\}_{n\in\N}$ for $A$, such that we have the inclusion 
\[
u_n \cdot \Dom(\D) \subset \Dom(\mS) \cap \Dom(\mT) . 
\]
\end{enumerate}
\end{assumption}

\begin{prop}
\label{prop:Kasp_prod_half-closed}
Consider the setting of \cref{ass:construction}. 
We assume furthermore that for each $n\in\N$ there exists $c_n\in[0,\infty)$ such that for all $\psi\in\Dom\D$ we have the inequality
\begin{align}
\label{eq:anti-comm_ineq}
\pm \Big( \big\la \mS u_n\psi \bigmvert \mT u_n\psi \big\ra + \big\la \mT u_n\psi \bigmvert \mS u_n\psi \big\ra \Big) 
&\leq c_n \big\la u_n\psi \bigmvert (1+\D^*\D)^{\frac12} u_n\psi \big\ra . 
\end{align}
Then $(\A,{}_\pi E_C,\D)$ represents the Kasparov product of $(\A,{}_{\pi_1}(E_1)_B,\D_1)$ and $(\B,{}_{\pi_2}(E_2)_C,\D_2)$. 
\end{prop}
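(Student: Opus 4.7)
The plan is to apply \cref{thm:Kucerovsky_half-closed_strong}; most of the work lies in verifying the strong local positivity condition, while the connection condition comes essentially for free from \ref{item:T_connection}.

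\textbf{Connection condition.} For $\psi$ in the dense subspace of $\A\cdot\Dom\D_1$ from \ref{item:T_connection}, I would argue that $\til T_\psi\in\Lip(\mS\oplus0)$: indeed, on $\Dom\D_2$ we have $\mS T_\psi=T_{\D_1\psi}$ (bounded), and similarly $T_\psi^*\mS\subset T_{\D_1\psi}^*$ on $\Dom\mS$, with the image of $T_\psi$ landing in $\Dom\mS$. Combined with $\til T_\psi\in\Lip(\mT\oplus\D_2)$ from \ref{item:T_connection}, and using that $\Dom(\D\oplus\D_2)\cap\mathrm{Ran}(\til T_\psi)\subset(\Dom\mS\cap\Dom\mT)\oplus\Dom\D_2$, one gets $\til T_\psi\in\Lip(\D\oplus\D_2)$, which is the connection condition of \cref{defn:connection_condition}.

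\textbf{Strong local positivity.} Fix $n\in\N$ and $\psi\in\Dom(\D)$. By \ref{item:loc_dom_incl}, $u_n\psi\in\Dom\mS\cap\Dom\mT$, which in particular gives the required inclusion $u_n\cdot\Dom(\D)\subset\Dom(\D_1\hot1)$ and ensures the identity $\D u_n\psi=\mS u_n\psi+\mT u_n\psi$. Plugging this in,
\begin{align*}
\big\la \mS u_n\psi\bigmvert\D u_n\psi\big\ra+\big\la\D u_n\psi\bigmvert\mS u_n\psi\big\ra
=2\big\lla\mS u_n\psi\big\rra+\Big(\big\la\mS u_n\psi\bigmvert\mT u_n\psi\big\ra+\big\la\mT u_n\psi\bigmvert\mS u_n\psi\big\ra\Big).
\end{align*}
The bracketed anti-commutator term is bounded below by $-c_n\la u_n\psi\mvert(1+\D^*\D)^{1/2}u_n\psi\ra$ thanks to \cref{eq:anti-comm_ineq}. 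Hence the strong local positivity condition of \cref{defn:strong_positivity_condition} holds with $\nu_n=2$ and the same constant $c_n$.

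\textbf{Conclusion.} All hypotheses of \cref{thm:Kucerovsky_half-closed_strong} are satisfied by $(\A,{}_\pi E_C,\D)$, so it represents the Kasparov product of $(\A,{}_{\pi_1}(E_1)_B,\D_1)$ and $(\B,{}_{\pi_2}(E_2)_C,\D_2)$. The main subtlety I expect is purely a domain-bookkeeping issue: one must use \ref{item:loc_dom_incl} at the right moment to convert $\D u_n\psi$ into $(\mS+\mT)u_n\psi$, and to justify that $\til T_\psi$ really preserves the relevant graph domains when upgrading from $\mT\oplus\D_2$ to $\D\oplus\D_2$. Beyond that, the crucial observation is that the positivity bound comes essentially ``for free'' with $\nu_n=2$, because the $\lla\mS u_n\psi\rra$ term in the expansion appears \emph{twice}, which is exactly what \cref{defn:strong_positivity_condition} asks for.
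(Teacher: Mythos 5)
Your proposal is correct and follows essentially the same route as the paper's proof: the connection condition is obtained by combining $\til T_\psi\in\Lip(\mS\oplus0)$ with \ref{item:T_connection} (the paper handles the domain technicality by noting $\Dom(\mS)\cap\Dom(\mT)$ is a core for $\D$), and the strong local positivity condition is verified with $\nu_n=2$ exactly via the expansion $2\lla\mS u_n\psi\rra$ plus the anti-commutator controlled by \eqref{eq:anti-comm_ineq}, after which \cref{thm:Kucerovsky_half-closed_strong} applies.
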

\begin{proof}
For any $\psi\in\Dom\D_1$ we have bounded operators $\mS T_\psi = T_{\D_1\psi}$ and $T^*_\psi \mS = T^*_{\D_1\psi}$, which means that $\til T_\psi\in\Lip(\mS\oplus0)$. Combined with condition \ref{item:T_connection} this ensures that $\til T_\psi$ preserves $\big(\Dom(\mS)\cap\Dom(\mT)\big)\oplus\Dom(\D_2)$, and that $[\D\oplus\D_2,\til T_\psi]$ is bounded on this domain. Since $\Dom(\mS)\cap\Dom(\mT)$ is a core for $\D$, it follows that the connection condition (\cref{defn:connection_condition}) is satisfied. 

Next, using \cref{eq:anti-comm_ineq} we obtain the inequality 
\begin{align*}
\big\la \mS u_n\psi \bigmvert \D u_n\psi \big\ra + \big\la \D u_n\psi \bigmvert \mS u_n\psi \big\ra 
&= 2 \big\lla \mS u_n\psi \big\rra + \big\la \mS u_n\psi \bigmvert \mT u_n\psi \big\ra + \big\la \mT u_n\psi \bigmvert \mS u_n\psi \big\ra \\
&\geq 2 \big\lla \mS u_n\psi \big\rra - c_n \big\la u_n\psi \bigmvert (1+\D^*\D)^{\frac12} u_n\psi \big\ra . 
\end{align*}
We conclude that the strong local positivity condition (\cref{defn:strong_positivity_condition}) is also satisfied (with $\nu_n=2$ for all $n\in\N$). 
The statement then follows from \cref{thm:Kucerovsky_half-closed_strong}. 
\end{proof}

The following result provides a sufficient condition which ensures that the hypothesis of the above proposition is satisfied, by considering the anti-commutator $[\mS,\mT]$ on a suitable domain. 

\begin{thm}
\label{thm:Kasp_prod_half-closed}
Consider the setting of \cref{ass:construction}. 
We assume furthermore that there exists a core $\mF\subset\Dom\D$ 
such that 
for each $n\in\N$ we have $u_n\cdot\mF \subset \Dom(\mS\mT)\cap\Dom(\mT\mS)$, and there exists $C_n\in[0,\infty)$ such that for all $\eta\in\mF$ we have 
\begin{align}
\label{eq:anti-comm_bound}
\|[\mS,\mT]u_n\eta\| \leq C_n \|u_n\eta\|_\D . 
\end{align}
Then $(\A,{}_\pi E_C,\D)$ represents the Kasparov product of $(\A,{}_{\pi_1}(E_1)_B,\D_1)$ and $(\B,{}_{\pi_2}(E_2)_C,\D_2)$. 
\end{thm}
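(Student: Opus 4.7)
The plan is to apply \cref{prop:Kasp_prod_half-closed}, so it suffices to verify, for each $n\in\N$, the existence of a constant $c_n\in[0,\infty)$ such that
\[
\pm Q_n(\psi) \leq c_n \big\la u_n\psi \bigmvert (1+\D^*\D)^{\frac12} u_n\psi \big\ra \quad\text{for all } \psi\in\Dom\D,
\]
where $Q_n(\psi) := \la\mS u_n\psi \mvert \mT u_n\psi\ra + \la\mT u_n\psi \mvert \mS u_n\psi\ra$. On the core $\mF$, the hypothesis $u_m\cdot\mF\subset\Dom(\mS\mT)\cap\Dom(\mT\mS)$ together with the symmetry of $\mS$ applied to $\mT u_m\eta\in\Dom\mS$ lets us rewrite
\[
Q_m(\eta) = \la u_m\eta \mvert [\mS,\mT]u_m\eta\ra = \la\eta \mvert u_m[\mS,\mT]u_m\eta\ra, \qquad \eta\in\mF.
\]

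Next, I would consider the symmetric operator $M_m := u_m[\mS,\mT]u_m$ on $\mF$; its symmetry is inherited from that of $[\mS,\mT]$ on $u_m\cdot\mF$ together with the self-adjointness of $u_m$. Since $u_m\in\Lip(\D)$ we have a graph-norm bound $\|u_m\eta\|_\D\leq K_m'\|\eta\|_\D$, and combined with \eqref{eq:anti-comm_bound} this gives
\[
\|M_m\eta\| \leq \|u_m\|\cdot\|[\mS,\mT]u_m\eta\| \leq \|u_m\|C_mK_m'\|\eta\|_\D = K_m \|(1+\D^*\D)^{\frac12}\eta\|, \qquad \eta\in\mF.
\]
Because $\mF$ is a core for $\D$, $M_m$ extends by continuity to a bounded symmetric operator $\bar M_m\colon\Dom\D\to E$ with $\|\bar M_m(1+\D^*\D)^{-\frac12}\|\leq K_m$. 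Writing $P:=(1+\D^*\D)^{\frac12}$, \cref{lem:interpolation} then yields
\[
|\la\psi \mvert \bar M_m\psi\ra| \leq K_m\,\la\psi \mvert (1+\D^*\D)^{\frac12}\psi\ra \qquad\text{for all } \psi\in\Dom\D.
\]

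To identify $\la\psi\mvert\bar M_m\psi\ra$ with $Q_m(\psi)$ on all of $\Dom\D$, I would note that both quadratic forms agree on $\mF$ and are continuous on $\Dom\D$ in the graph norm: the continuity of $Q_m$ follows from the closedness of $\mS$ and $\mT$, which together with $u_m\cdot\Dom\D\subset\Dom\mS\cap\Dom\mT$ and the closed graph theorem shows that $\mS u_m,\mT u_m\colon\Dom\D\to E$ are bounded. Hence $|Q_m(\psi)|\leq K_m\la\psi\mvert P\psi\ra$ throughout $\Dom\D$.

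Finally, to convert this into the form required by \cref{prop:Kasp_prod_half-closed}, fix $n\in\N$, set $m:=n+1$, and apply the preceding bound with $\psi$ replaced by $u_n\psi\in\Dom\D$ (using that $u_n$ preserves $\Dom\D$). The almost-idempotency $u_{n+1}u_n=u_n$ forces $Q_{n+1}(u_n\psi)=Q_n(\psi)$, and so
\[
|Q_n(\psi)| \leq K_{n+1}\,\la u_n\psi\mvert (1+\D^*\D)^{\frac12}u_n\psi\ra,
\]
which is the required inequality with $c_n := K_{n+1}$; \cref{prop:Kasp_prod_half-closed} then completes the proof. The main obstacle is the mismatch between the form bound supplied directly by \cref{lem:interpolation} (in terms of $\la\psi\mvert P\psi\ra$) and the form demanded by \cref{prop:Kasp_prod_half-closed} (in terms of $\la u_n\psi\mvert Pu_n\psi\ra$): a naive Cauchy--Schwarz on $\la u_n\eta\mvert[\mS,\mT]u_n\eta\ra$ produces only $\|u_n\eta\|\cdot\|u_n\eta\|_\D$, which dominates rather than is dominated by $\la u_n\eta\mvert Pu_n\eta\ra$. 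The interpolation of \cref{lem:interpolation} closes precisely this gap, and the almost-idempotency of $\{u_n\}$ then allows the cut-off to be relocated from the outer $\psi$ to the inner $u_n\psi$.
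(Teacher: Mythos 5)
Your proposal is correct and follows essentially the same route as the paper: both reduce to \cref{prop:Kasp_prod_half-closed}, extend the anti-commutator form from the core $\mF$ to $u_n\cdot\Dom\D$ using \eqref{eq:anti-comm_bound}, and obtain the relative form bound via \cref{lem:interpolation} together with the almost-idempotency trick of inserting $u_{n+1}$ with $u_{n+1}u_n=u_n$. The only (cosmetic) difference is that the paper extends the operator $\bar{[\mS,\mT]}$ by closure and runs two explicit approximation arguments, whereas you extend the sandwiched operator $u_m[\mS,\mT]u_m$ by graph-norm continuity and identify the quadratic forms by continuity.
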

\begin{proof}
We consider the closed symmetric operator $\bar{[\mS,\mT]}$. 
Fix $n\in\N$, and let $\psi\in\Dom(\D)$. 
Since $\mF$ is a core for $\D$, we can choose a sequence $\{\psi_k\}_{k\in\N} \subset \mF$ such that $\|\psi_k-\psi\|_\D\to0$ as $k\to\infty$. The inequality 
\[
\big\| u_n \psi_k - u_n\psi \big\|_\D^2 
\leq 2 \big( \|u_n\|^2 + \|[\D,u_n]\|^2 \big) \big\| \psi_k - \psi \big\|_\D^2 
\]
ensures that we also have the convergence $\|u_n\psi_k-u_n\psi\|_\D\to0$ as $k\to\infty$. 
In particular, $u_n\psi_k$ is Cauchy with respect to $\|\cdot\|_\D$, and from \cref{eq:anti-comm_bound} we see that $u_n\psi_k$ is also Cauchy with respect to the graph norm of $[\mS,\mT]$. Hence $u_n\psi = \lim_{k\to\infty} u_n\psi_k$ lies in the domain of the closure $\bar{[\mS,\mT]}$, and we have shown the inclusion $u_n\cdot\Dom(\D) \subset \Dom(\bar{[\mS,\mT]})$. 

Furthermore, since $u_n\psi \in \Dom(\D)$, we can also choose a sequence $\{\eta_k\}_{k\in\N} \subset \mF$ such that $\|\eta_k-u_n\psi\|_\D\to0$ as $k\to\infty$. The inequality 
\[
\big\| u_{n+1} \eta_k - u_n\psi \big\|_\D^2 
= \big\| u_{n+1} (\eta_k - u_n\psi) \big\|_\D^2 
\leq 2 \big( \|u_{n+1}\|^2 + \|[\D,u_{n+1}]\|^2 \big) \big\| \eta_k - u_n\psi \big\|_\D^2 
\]
ensures that we then have the convergence $\|u_{n+1}\eta_k-u_n\psi\|_\D\to0$ as $k\to\infty$. 
Since $u_{n+1}\cdot\Dom(\D) \subset \Dom(\mS)$ by condition \ref{item:loc_dom_incl}, we can estimate 
\[
\big\| \mS u_{n+1} \eta_k - \mS u_n\psi \big\| \leq \big\| \mS u_{n+1} (1+\D^*\D)^{-\frac12} \big\| \; \big\| \eta_k-u_n\psi \big\|_\D .
\]
This ensures that we have norm-convergence $\mS u_{n+1}\eta_k \to \mS u_n\psi$ as $k\to\infty$. 
Similarly we also have norm-convergence $\mT u_{n+1}\eta_k \to \mT u_n\psi$ and $[\mS,\mT]u_{n+1}\eta_k \to \bar{[\mS,\mT]}u_n\psi$. 
Hence we have 
\begin{align*}
\big\la \mS u_n\psi \bigmvert \mT u_n\psi \big\ra + \big\la \mT u_n\psi \bigmvert \mS u_n\psi \big\ra
&= \lim_{k\to\infty} \big\la \mS u_{n+1}\eta_k \bigmvert \mT u_{n+1}\eta_k \big\ra + \big\la \mT u_{n+1}\eta_k \bigmvert \mS u_{n+1}\eta_k \big\ra \\
&= \lim_{k\to\infty} \big\la u_{n+1}\eta_k \bigmvert [\mS,\mT] u_{n+1}\eta_k \big\ra \\
&= \big\la u_n\psi \bigmvert \bar{[\mS,\mT]} u_n\psi \big\ra . 
\end{align*}
Finally, we can estimate the right-hand-side as follows. 
Since $\bar{[\mS,\mT]} u_{n+1} (1+\D^*\D)^{-\frac12}$ is bounded, and writing $c_n := \big\| \bar{[\mS,\mT]} u_{n+1} (1+\D^*\D)^{-\frac12} \big\|$, we know from \cref{lem:interpolation} that $\big\| (1+\D^*\D)^{-\frac14} u_{n+1} \bar{[\mS,\mT]} u_{n+1} (1+\D^*\D)^{-\frac14} \big\| \leq c_n$. 
For any $\psi\in\Dom(\D)$ we then have the inequality 
\begin{align*}
&\pm \big\la u_n\psi \bigmvert \bar{[\mS,\mT]} u_n\psi \big\ra \nonumber\\
&\quad= \pm \big\la (1+\D^*\D)^{\frac14} u_n \psi \bigmvert (1+\D^*\D)^{-\frac14} u_{n+1} \bar{[\mS,\mT]} u_{n+1} (1+\D^*\D)^{-\frac14} \, (1+\D^*\D)^{\frac14} u_n \psi \big\ra \nonumber\\
&\quad\leq c_n \big\la u_n\psi \bigmvert (1+\D^*\D)^{\frac12} u_n\psi \big\ra . 
\end{align*}
The statement then follows from \cref{prop:Kasp_prod_half-closed}. 
\end{proof}


\providecommand{\noopsort}[1]{}
\providecommand{\bysame}{\leavevmode\hbox to3em{\hrulefill}\thinspace}
\providecommand{\MR}{\relax\ifhmode\unskip\space\fi MR }
\providecommand{\MRhref}[2]{%
  \href{http://www.ams.org/mathscinet-getitem?mr=#1}{#2}
}
\providecommand{\href}[2]{#2}
\providecommand{\doilinktitle}[2]{#1}
\providecommand{\doilinkjournal}[2]{\href{https://doi.org/#2}{#1}}
\providecommand{\doilinkvynp}[2]{\href{https://doi.org/#2}{#1}}
\providecommand{\eprint}[2]{#1:\href{https://arxiv.org/abs/#2}{#2}}

\end{document}